\documentclass[11pt]{amsart}
\usepackage{amsmath,amsthm,amscd,amsfonts,amssymb,graphicx,color,MnSymbol,enumerate,tikz-cd}
\usepackage{hyperref,cleveref}
\usepackage{xcolor}
\usepackage[labelformat=empty]{caption,subcaption}
\usepackage{inputenc}
\usepackage{stmaryrd}

\newtheorem{theorem}{Theorem}[section]
\newtheorem{lemma}[theorem]{Lemma}
\newtheorem{proposition}[theorem]{Proposition}
\newtheorem{corollary}[theorem]{Corollary}
\theoremstyle{definition}
\newtheorem{definition}[theorem]{Definition}

\theoremstyle{remark}
\newtheorem{remark}[theorem]{Remark}

\numberwithin{equation}{section}

\allowdisplaybreaks

\begin{document}
	
	\title[On Regular Mac Lane-Vaqui\'e chains]{On Regular Mac Lane-Vaqui\'e chains}
	\author[Nikita Dwivedi]{Nikita Dwivedi}
	\address{Department of Mathematics\\ University of Delhi\\  Delhi-110007, India.}
	\email{ndwivedi@maths.du.ac.in}
	\author[Anuj Bishnoi]{Anuj Bishnoi$^\ast$}
	\address{Department of Mathematics\\  University of Delhi \\   Delhi-110007, India.}
	\email{abishnoi@maths.du.ac.in}
	
	\begin{abstract}
   Let $(K^h,v^h)$ be a Henselization of a valued field $(K,v)$, $\mu$ any extension of $v$ to $K[x]$, and $\mu^h$ the canonical extension of $\mu$ and $v^h$ to $K^h[x].$ In this paper, we study two new invariants associated with a Mac Lane-Vaqui\'e chain (MLV) of $\mu$, namely, the defect of each augmentation step and the regularity of an MLV chain of $\mu$. This, in turn, extends the notion of regularity of a complete sequence of abstract key polynomials for $\mu$ to the regularity of $\mu$ itself. We also prove that the regularity of $\mu$ is a sufficient condition for the equality of the depths of $\mu$ and $\mu^h$. Furthermore, it is a necessary and sufficient condition for the equality of the defect and relative gap of each augmentation step of an MLV chain of $\mu.$ Finally, we show that if $\mu$ has finite degree, then the regularity of $\mu$ is equivalent to $\deg(\mu)=\deg(\mu^h).$
	\end{abstract}
	\subjclass[2020]{ Primary: 13A18; Secondary: 12J20, 12J10}
	\keywords {Abstract key polynomial, Defect, Key polynomial, Mac Lane-Vaqui\'e chain, Valuation}
	\thanks{$^\ast$Corresponding author, E-mail address: abishnoi@maths.du.ac.in}
	\maketitle
	
	
	\section{Introduction}
Let $(K,v)$ be a valued field, $\bar{v}$ an extension of $v$ to the fixed algebraic closure $\overline{K}$ of $K,$ and $K^h$ the Henselization of $K$ with respect to $\bar{v}.$ Let $\mu$ be an extension of $v$ to $K[x].$

In 2021, Nart \cite{EN1} introduced Mac Lane-Vaquie (MLV) chains of $\mu$, as a mixture of ordinary and limit augmentations of valuations on $K[x]$, extending $v$:
\begin{align*}
	(v\xrightarrow{\phi_0,\gamma_0})\mu_0\xrightarrow{\phi_1,\gamma_1} \mu_1\xrightarrow{\phi_2,\gamma_2}\cdots \longrightarrow \mu_{n}\xrightarrow{\phi_{n+1},\gamma_{n+1}} \mu_{n+1}\longrightarrow\cdots\longrightarrow \mu
\end{align*}
 and proved that every valuation on $K[x]$ is a limit of a countable MLV chain. This chain is unique with respect to intrinsic data such as depth, relative gaps, degrees of the nodes and the nature of each augmentation step. The defect of an augmentation was defined by Nart and Novacoski in \cite{EN3}. In this paper, we prove that the defect of an augmentation step in an MLV chain of $\mu$ is also an intrinsic characteristic, i.e., it does not depend on the choice of an MLV chain of $\mu$ (see Theorem \ref{td1}).

Let $\overline{\mu}$ be a common extension of $\mu$ and $\bar{v}$ to $\overline{K}[x],$ and let $\mu^h$ be its restriction to $K^h[x].$ In 2004, Kuhlmann  proved that both $\mu$ and $\overline{\mu}$ fall into the same category, viz., valuation-transcendental, valuation-algebraic or valuation with nontrivial support (see \cite[Theorem 3.11]{K}). In fact, $\mu^h$ also falls into the same category. Therefore, MLV chains of $\mu$ and $\mu^h$ are also of the same Type as in Theorem \ref{mlvt1}. However, the depth of an MLV chain of $\mu$ and $\mu^h$ need not be the same (see Examples 8.4 and 8.6 of \cite{EN3}). In this paper, we prove that not only their depths but also their defining pairs are the same, under the 
\begin{quote}
{\bf Hypothesis}: {\it each defining key polynomial $\phi_n$ of an MLV chain of $\mu$ is irreducible over $K^h,$ i.e., $\phi_n=\phi_n^h.$}	
\end{quote}
\smallskip

In 2007, Vaqui\'e proved that if a finite simple extension $K(\theta)|K$ is unibranched, then its Henselian defect equals the product of the relative gaps of the augmentation steps in an MLV chain of a valuation $\mu,$ induced by the extension of $v$ to $K(\theta)$ (see Theorem \ref{vth1}). Later, Nart and Novacoski gave another characterization of defect of an arbitrary finite simple extension, expressing it in terms of the product of the defects of the augmentation steps (see Theorem \ref{T2}). They also proved that if $(K,v)$ is Henselian, then the defect and relative gap of any augmentation are equal (see Lemma \ref{nl2}). In Theorem \ref{mt2}, we generalize this result to arbitrary valued fields, for the augmentation steps of an MLV chain of $\mu$  satisfying the hypothesis. In particular, by Theorems \ref{mt2} and \ref{lst}, it follows that if $\mu$ is induced by an extension of $v$ to $K(\theta)$, then the defect and relative gap of each augmentation step of an MLV chain of $\mu,$ are equal if and only if $K(\theta)|K$ is unibranched.

Recently, Dutta and Mahboub \cite{AM} introduced the notion of regularity for a complete sequence of abstract key polynomials (ABKPs) for $\mu$. Using the well-known connection between an MLV chain of $\mu$ and a complete sequence of ABKPs for $\mu$, we show that this notion of regularity is equivalent to the hypothesis. Keeping this in mind, we call any MLV chain of $\mu$ satisfying the hypothesis  a regular MLV chain. As a consequence of Theorem \ref{mt2}, we also obtain that the regularity is independent of the choice of an MLV chain of $\mu$ (see Corollary \ref{lc}). In particular, if a complete sequence of ABKPs for $\mu$ is regular, then all complete sequences of ABKPs are also regular. Thus, we say that $\mu$ is regular if any MLV chain of $\mu$ is regular. Moreover, for a finite degree valuation $\mu$, the hypothesis is equivalent to the condition that $\deg(\mu)=\deg(\mu^h)$ (see Theorem \ref{lst}). Finally, we show that if $\mu$ is regular, then any valuation less than $\mu$ is also regular. 
	 
	\section{Preliminaries}
		 In this section, we recall some notation, definitions, and results that will be used in the proofs of our main results.
		
 A surjective map $v:K \longrightarrow\ \Gamma_v\infty:=\Gamma_v\cup\{\infty\},$ where $\Gamma_v$ is a totally ordered additively written abelian group, is called a {\bf valuation} if it satisfies the following axioms, for all $a,b$ in $K$:
	\begin{enumerate}[(i)]
		
\item $v(ab)=v(a)+v(b)$
	\item $v(a+b)\geq \min\{v(a),v(b)\}$
			\item $v(a)=\infty$ if and only if $a=0$.
	\end{enumerate}
		The pair $(K,v)$ is called a {\bf valued field} of arbitrary rank, and  $\Gamma_ v$  its {\bf value group}. The set $O_v=\{a\in K \mid v(a)\geq 0\}$ is a subring of $K,$ called the {\bf valuation ring}, which has a unique {\bf maximal ideal} $M_v=\{a\in K\mid v(a)>0 \}.$ The quotient $O_v/M_v$ is called the {\bf residue field} of $v$ and is denoted by $k_v.$  Let $\bar{v}$ be an extension of $v$ to a fixed algebraic closure $\overline{K}$ of $K$ with value group $\Gamma_{\bar{v}}.$ 
		
 Let $\mu:K[x]\longrightarrow\Gamma\infty$ be an extension of $v$ satisfying axioms (i) and (ii) above, where $\Gamma$ is some ordered abelian group containing $\Gamma_{\bar{v}}.$ The {\bf support}  of $\mu$ is the prime ideal
 \[\text{supp}(\mu)=\mu^{-1}(\infty).\] If $\text{supp}(\mu)=\{0\},$ then it extends uniquely to a valuation on $K(x).$ If $\text{supp}(\mu)\neq\{0\},$ then we say that $\mu$ has a nontrivial support. In either case, we call $\mu$ a valuation on $K[x],$ and define the value group $\Gamma_\mu$ of $\mu$ as the group generated by $\mu(K[x]\backslash\text{supp}(\mu)),$ and the residue field $k_\mu$ of $\mu$ as the residue field of the canonical valuation induced by $\mu$ (see Remark \ref{r21}) on the quotient field of $K[x]/\text{supp}(\mu)$.

 An extension $\mu$ of $v$ to $K(x)$ satisfies the well-known Abhyankar inequality: \[\text{rr}(\Gamma_\mu / \Gamma_v) + \text{tr.deg.}[k_{\mu}: k_v] \leq 1,\] where $\text{rr}(\Gamma_\mu / \Gamma_v )$ is the rational rank of $\Gamma_\mu / \Gamma_ v,$ and $\text{tr.deg.}[k_{\mu}: k_v]$ is the transcendence degree of $k_{\mu}$ over $k_v.$ The extension $\mu$ is said to be {\bf value-transcendental} if $\text{rr}(\Gamma_\mu / \Gamma_v)=1$ and is said to be {\bf residue-transcendental} if $\text{tr.deg.}[k_{\mu}: k_v]=1$. We call $\mu$ {\bf valuation-transcendental} if it is either value-transcendental or residue-transcendental. Otherwise, $\mu$ is called {\bf valuation-algebraic}.

An extension $\overline{\mu}$ of $\mu$ to $\overline{K}[x]$ which is also an extension of $\bar{v}$ is called a {\bf common extension} of $\mu$ and $\bar{v}$. 

For any pair $(\alpha,\delta)\in\overline{K}\times\Gamma\infty,$ the map $\overline{w}_{\alpha,\delta}: \overline{K}[x]\longrightarrow \Gamma\infty,$ given by
	\begin{align*}
		\overline{w}_{\alpha,\delta}\left(\sum_{i\geq 0} c_i (x-\alpha)^i\right):=\min_{i\geq 0}\{\bar{v}(c_i)+i\delta\}, \, c_i\in\overline{K},
	\end{align*}
	is a valuation on $\overline{K}[x]$ and is said to be defined by $\min,\, \bar{v},\, \alpha$ and $\delta.$ If $\delta=\infty,$ then it has a nontrivial support generated by $(x-\alpha).$
		Let $\overline{\mu}$ be a common extension of $\mu$ and $\bar{v}$ to $\overline{K}[x]$ such that $\overline{\mu}=\overline{w}_{\alpha,\delta},$ then $(\alpha,\delta)$ is called a {\bf pair of definition} for $\overline{\mu}.$ We denote its restriction $\mu$ to $K[x]$ by $w_{\alpha,\delta}.$
	\begin{lemma}(\cite[Lemma 2.4]{W})\label{il1}
		Let $(\alpha,\delta)$ be a pair of definition for $\overline{\mu}$ and $(\alpha',\delta')\in \overline{K}\times\Gamma.$ Then $(\alpha',\delta')$ is also a pair of definition for $\overline{\mu}$ if and only if $\delta'=\delta$ and $\bar{v}(\alpha-\alpha')\geq \delta.$
	\end{lemma}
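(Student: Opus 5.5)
The plan is to prove both directions by evaluating the two valuations $\overline{w}_{\alpha,\delta}$ and $\overline{w}_{\alpha',\delta'}$ on a few test polynomials.

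\emph{Sufficiency.} Assume $\delta'=\delta$ and $\bar{v}(\alpha-\alpha')\geq\delta$. We show $\overline{w}_{\alpha',\delta}=\overline{w}_{\alpha,\delta}$ on all of $\overline{K}[x]$.

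Put $\beta=\alpha'-\alpha$, so that $x-\alpha'=(x-\alpha)-\beta$ and $\bar v(\beta)\geq\delta$. Since $\delta'=\delta\in\Gamma$, the map $\overline{w}_{\alpha,\delta}$ is a genuine valuation (multiplicative, ultrametric). Compute it on the linear polynomial:
\[\overline{w}_{\alpha,\delta}(x-\alpha')=\min\{\delta,\bar v(\beta)\}=\delta.\]

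Now take $f=\sum_i c_i(x-\alpha')^i$. By multiplicativity, $\overline{w}_{\alpha,\delta}(c_i(x-\alpha')^i)=\bar v(c_i)+i\delta$. The ultrametric inequality then gives
\[\overline{w}_{\alpha,\delta}(f)\geq\min_i\{\bar v(c_i)+i\delta\}=\overline{w}_{\alpha',\delta}(f).\]
The hypothesis is symmetric in $\alpha$ and $\alpha'$ (it only involves $\bar v(\alpha-\alpha')$), so the same argument gives the reverse inequality. Hence the two valuations are equal, and $(\alpha',\delta')$ is a pair of definition for $\overline{\mu}$.

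\emph{Necessity.} Assume $\overline{w}_{\alpha,\delta}=\overline{w}_{\alpha',\delta'}=\overline{\mu}$.

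First we recover the data from $\overline{\mu}$. Directly from the definition, $\overline{w}_{\alpha',\delta'}(x-\alpha')=\delta'$. We claim that
\[\delta'=\max\{\overline{\mu}(x-c)\mid c\in\overline{K}\},\]
with the maximum attained at $c=\alpha'$. Indeed, for any $c$ we have $x-c=(x-\alpha')+(\alpha'-c)$, so
\[\overline{w}_{\alpha',\delta'}(x-c)=\min\{\delta',\bar v(\alpha'-c)\}\leq\delta'.\]

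The same characterization applied to $\overline{w}_{\alpha,\delta}$ shows that $\delta$ is this same maximum. Therefore $\delta=\delta'$.

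Finally, evaluate $\overline{\mu}$ at $x-\alpha'$ using the $\alpha$-representation:
\[\delta=\delta'=\overline{\mu}(x-\alpha')=\min\{\delta,\bar v(\alpha-\alpha')\}.\]
This forces $\bar v(\alpha-\alpha')\geq\delta$.

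The only point needing care is that the max-characterization requires $\delta\in\Gamma$. Here $\delta'\in\Gamma$ by hypothesis, and $\delta$ is then forced to equal $\delta'$ by the argument above, so it is finite as well. I expect no serious obstacle: the whole proof consists of these evaluations plus the symmetry observation in the sufficiency part.
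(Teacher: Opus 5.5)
The paper gives no proof of this lemma; it is quoted from \cite[Lemma 2.4]{W}, so there is no internal argument to compare yours against. Your proof is correct and complete.

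\emph{Necessity.} Evaluating on the linear polynomials $x-c$ uses only the defining formula, via $x-c=(x-\alpha)+(\alpha-c)$. It recovers $\delta$ as $\max_{c\in\overline{K}}\overline{\mu}(x-c)$ and then forces $\bar v(\alpha-\alpha')\geq\delta$.

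\emph{Sufficiency.} You correctly use that $\overline{w}_{\alpha,\delta}$ is multiplicative and ultrametric, which the paper takes as known. You then use the symmetry of the condition $\bar v(\alpha-\alpha')\geq\delta$ to get both inequalities.

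A minor point about your final caveat. The max-characterization does not require $\delta\in\Gamma$. If $\delta=\infty$, then $\overline{w}_{\alpha,\infty}(x-c)=\bar v(\alpha-c)$, and the maximum $\infty$ is attained at $c=\alpha$. So no extra care is needed. As you phrased it, the caveat would be circular: you would be using a characterization that presupposes $\delta$ finite in order to prove that $\delta$ is finite. Since the characterization holds for $\delta=\infty$ too, there is no circularity, and the case $\delta=\infty$ is handled automatically because $\delta'\in\Gamma$.

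Within the paper, the lemma could also be read off from the comparison criterion (\ref{eb}) by antisymmetry. Requiring both $\overline{w}_{\alpha,\delta}\leq\overline{w}_{\alpha',\delta'}$ and $\overline{w}_{\alpha',\delta'}\leq\overline{w}_{\alpha,\delta}$ gives $\delta\leq\delta'$, $\delta'\leq\delta$ and $\bar v(\alpha-\alpha')\geq\delta$. However, (\ref{eb}) is itself stated without proof. Your argument is essentially a direct proof of (\ref{eb}) specialized to equality, so it does not depend on a further citation.
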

	
	\begin{definition}
	A pair $(\alpha,\delta)$ in $\overline{K}\times \Gamma$ is called  a $(K,v)$-{\bf minimal pair} if for every $\beta$ in $\overline{K}$ satisfying $\bar{v}(\alpha-\beta)\geq\delta,$ we have  $\deg\beta\geq\deg\alpha,$ where by $\deg\alpha$ we mean the degree of the extension $K(\alpha)|K.$  
\end{definition}

 If $\overline{\mu}=\overline{w}_{\alpha,\delta},$ such that $(\alpha,\delta)$ is a $(K,v)$-minimal pair, then we say that $(\alpha,\delta)$ is a {\bf minimal pair of definition} for $\overline{\mu}.$ In view of Lemma \ref{il1}, any pair of definition for $\overline{\mu}$ can be replaced by a minimal pair of definition.

Moreover, for any $(\alpha,\delta), (\alpha',\delta')\in \overline{K}\times\Gamma\infty,$ we have 
\begin{align}\label{eb}
\overline{w}_{\alpha,\delta}(f)\leq \overline{w}_{\alpha',\delta'}(f),\ \text{for all} \ f\in\overline{K}[x]\iff \delta\leq \delta' \ \text{and} \ \bar{v}(\alpha-\alpha')\geq \delta.	
\end{align}

If  $\delta<\infty,$ then $w_{\alpha,\delta}$ is a valuation-transcendental extension, and any valuation-transcendental extension can be obtained in this way \cite[Corollary 3.7]{W}. The valuation $w_{\alpha,\delta}$ is residue-transcendental if $\delta\in\Gamma_{\overline{v}}$ and value-transcendental if $\delta\notin\Gamma_{\overline{v}}$. On the other hand, if $\delta=\infty,$ then $w_{\alpha,\delta}$ has a nontrivial support generated by the minimal polynomial of $\alpha$ over $K.$ If $\alpha\in K,$ then the valuation $w_{\alpha,\delta}$ is called a {\bf depth zero valuation}.

\subsection{Key Polynomials}

We now recall the definition of key polynomials, first introduced by Mac Lane \cite{M} in 1936 for discrete rank-one valuations and later generalized by Vaqui\'e \cite{V} in 2007 for arbitrary valuations. Further, Nart \cite{EN1} classified all possible extensions of $v$ to $K[x]$ using key polynomials.

For a valuation-transcendental extension  $\mu$ of $v$ to $K(x)$ and for polynomials $f,g$ in $K[x],$ we say that $f$ and $g$ are {\bf $\mu$-equivalent} (denoted $f\sim_\mu g$)  if $\mu(f-g)>\mu(f)=\mu(g)$ and $g$ is {\bf $\mu$-divisible} by $f$ (denoted $f\mid_{\mu} g$)  if there exists a polynomial $h \in K[x]$ such that $g$ is $\mu$-equivalent to $fh.$

		\begin{definition}
		A monic polynomial $\phi\in K[x]$   is called a  {\bf  key polynomial} for $\mu$ if it is
		\begin{enumerate}[(i)]
			\item  $\mu$-{\bf irreducible}, i.e., for any $f,\, g\in K[x],$ whenever $\phi\mid_{\mu} fg,$ then either $\phi\mid_{\mu}f $ or $\phi\mid_{\mu}g,$ and 
			\item $\mu$-{\bf minimal}, i.e., for every nonzero polynomial $f\in K[x],$ whenever $\phi\mid_{\mu}f,$ then $\deg f\geq \deg \phi.$
		\end{enumerate}
	\end{definition}
	
	The set of all key polynomials for $\mu$ is denoted by $\operatorname{KP}(\mu).$ If $\mu=w_{\alpha,\delta}$ for some $(K,v)$-minimal pair $(\alpha,\delta)\in \overline{K}\times\Gamma$ and $\phi$  is the minimal polynomial of $\alpha$ over $K,$ then $\phi$ is a key polynomial for $\mu$ of minimal degree (\cite[Theorem 1.1]{JN} and \cite[Theorem 2.21]{Ma}). 
The existence of key polynomials is characterized as follows.
\begin{theorem}(\cite[Theorem 4.4]{EN4})
	A valuation $\mu$ on $K(x)$ has $\operatorname{KP}(\mu)\neq\emptyset$ if and only if it is valuation-transcendental.
\end{theorem}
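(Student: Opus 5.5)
The statement to prove is: a valuation $\mu$ on $K(x)$ has $\operatorname{KP}(\mu)\neq\emptyset$ if and only if it is valuation-transcendental. I would prove the two implications separately.

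\emph{Valuation-transcendental implies key polynomials exist.} Let $\mu$ be valuation-transcendental. By \cite[Corollary 3.7]{W} we can write $\mu=w_{\alpha,\delta}$ for some $(\alpha,\delta)\in\overline{K}\times\Gamma$. Lemma \ref{il1} then lets us replace $(\alpha,\delta)$ by a $(K,v)$-minimal pair of definition. Concretely, among all $\beta\in\overline{K}$ with $\bar{v}(\alpha-\beta)\geq\delta$ we pick one of least degree over $K$. By the cited result (\cite[Theorem 1.1]{JN}, \cite[Theorem 2.21]{Ma}), the minimal polynomial of $\beta$ over $K$ is a key polynomial for $\mu$. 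Hence $\operatorname{KP}(\mu)\neq\emptyset$.

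\emph{Key polynomials exist implies valuation-transcendental.} We argue by contraposition: suppose $\mu$ is valuation-algebraic and $\phi\in\operatorname{KP}(\mu)$.

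Step 1: show that $\mu$-minimality of $\phi$ forces the following. For every $f$ with $\deg f<\deg\phi$, $\phi$ does not $\mu$-divide $f$. Equivalently, the $\phi$-expansion $f=\sum a_i\phi^i$ with $\deg a_i<\deg\phi$ satisfies $\mu(f)=\min_i \mu(a_i\phi^i)$.

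Step 2: use this to show that $\mu(\phi)$ is not a torsion element modulo $\Gamma_{\mu_{\deg<\deg\phi}}$, or else that the residue of a suitable normalization $\phi^e/a$ (with $\deg a<\deg\phi$ and $\mu(a)=e\mu(\phi)$) is transcendental over $k_v$.

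The argument for Step 2 runs as follows. Assume the residue of $\phi^e/a$ were algebraic, satisfying a monic equation over the residue ring of the degree-$<\deg\phi$ part. Lift that equation to get a polynomial $F=\sum_j c_j\phi^{ej}a^{m-j}$ with $\mu(F)>m e\mu(\phi)$. The leading term is $\mu$-equivalent to a combination of lower terms, so $\phi\mid_\mu F-\phi^{em}$ fails. That contradicts the $\mu$-irreducibility and $\mu$-minimality of $\phi$, via the expansion property from Step 1.

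Either outcome contradicts valuation-algebraicity, since Abhyankar's inequality becomes an equality.

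I expect Step 2 to be the main obstacle, especially in arbitrary rank, where the truncation $\mu_\phi$ need not equal $\mu$ a priori. My first attempt would be to compare $\mu$ with its truncation $\mu_\phi(f)=\min\mu(a_i\phi^i)$. I would prove $\mu_\phi=\mu$ using $\mu$-minimality together with $\mu$-irreducibility, the latter ensuring $\phi\nmid_\mu$ products of lower-degree polynomials. Once $\mu=\mu_\phi$ is a monomial-type valuation in $\phi$, $\mu$ is an augmentation, and augmentations are visibly valuation-transcendental: either $\mu(\phi)$ has infinite order over the lower value group, or the residue of $\phi^e/a$ is transcendental.
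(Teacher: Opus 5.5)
The paper does not prove this statement; it quotes it from \cite{EN4}, so I am judging your argument on its own.

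\textbf{First implication.} This is fine, and it uses exactly the facts the paper records in its preliminaries. By \cite[Corollary 3.7]{W} you have $\mu=w_{\alpha,\delta}$. Lemma \ref{il1} lets you pass to a minimal pair $(\beta,\delta)$. Then \cite[Theorem 1.1]{JN} and \cite[Theorem 2.21]{Ma} make the minimal polynomial of $\beta$ a key polynomial.

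\textbf{Converse: there is a genuine gap in Step 2.} Step 1 should be stated for all $f$; for $\deg f<\deg\phi$ it is vacuous. For all $f$ it is a lemma, \cite[Proposition 2.3]{EN4}, which the paper quotes, rather than a rephrasing of minimality.

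You normalize by an $a$ of positive degree $<\deg\phi$ and lift a monic relation with coefficients taken from residues of the degree-$<\deg\phi$ part. The resulting $F=\sum_j c_ja^{m-j}\phi^{ej}$ then has ``coefficients'' $c_ja^{m-j}$ that typically have degree $\geq\deg\phi$. That is not the $\phi$-expansion of $F$, so Step 1 says nothing about $\mu(F)$. The remark that $\phi\mid_\mu F-\phi^{em}$ fails does not contradict either minimality or irreducibility.

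The mechanism cannot work in that form. It never uses that the relation is over $k_v$, so if valid it would show that the residue of $\phi^e/a$ is transcendental over the lower-degree residue ring. That is false for key polynomials of non-minimal degree. Take the Gauss valuation on $\mathbb{Q}_3[x]$ and the key polynomial $\phi=x^2+1$. Then $e=1$, $a=1$, and the residue $\bar{x}^2+1$ lies in $\mathbb{F}_3[\bar{x}]$. Correspondingly $F=\phi-(x\cdot x+1)=0$, while the naive minimum over your ``expansion'' is $0$.

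Your last paragraph also misplaces the difficulty. The identity $\mu=\mu_\phi$ is precisely Step 1. The further claim that $\mu$ is then an augmentation (of which valuation?) is neither justified nor needed.

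\textbf{The fix.} Use your contrapositive hypothesis and normalize by a constant.
\begin{enumerate}[(i)]
\item If $\mu$ is valuation-algebraic, pick $e\geq 1$ with $e\mu(\phi)\in\Gamma_v$ and $a\in K^*$ with $v(a)=e\mu(\phi)$.
\item The residue $\xi$ of $\phi^e/a$ is algebraic over $k_v$. Lift a monic relation $\xi^m+\bar{c}_{m-1}\xi^{m-1}+\cdots+\bar{c}_0=0$ to $c_j\in O_v$ with $c_m=1$.
\item Set $F=\sum_{j=0}^m c_ja^{m-j}\phi^{ej}$. Then $\mu(F)=mv(a)+\mu\bigl(\sum_j c_j(\phi^e/a)^j\bigr)>me\mu(\phi)$.
\item Now every coefficient is a constant, so this really is the $\phi$-expansion of $F$. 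Step 1 gives $\mu(F)=\min_j\mu(c_ja^{m-j}\phi^{ej})\leq\mu(\phi^{em})=me\mu(\phi)$, a contradiction.
\end{enumerate}
This uses only the $\mu$-minimality of $\phi$. Irreducibility, the lower-degree value group and residue ring, and the detour through augmentations are all unnecessary.
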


		Let $\phi\in K[x]$ be  any key polynomial for $\mu$. Then, by Proposition 2.3 of \cite{EN4}, for  any polynomial $f\in K[x]$ with $\phi$-expansion $\sum_{i=0}^{n}   f_i \phi^i,$ $f_i\in K[x],$  $\deg f_i<\deg \phi,$ we have 
	\begin{align*}
		\mu(f)=\min_{0 \leq i \leq n}\{\mu(f_i)+ i\mu(\phi)\},
	\end{align*}
	and denote the set $S_{\mu,\phi}(f):=\{0\leq i\leq n\mid \mu(f)=\mu(f_i\phi^i)\}.$
	If $\phi$ has minimal degree in $\operatorname{KP}(\mu),$ then for any nonzero $f\in K[x],$ we denote   
	\[\deg_\mu(f):=\max(S_{\mu,\phi}(f)).\] Clearly, this is independent of the choice of $\phi$ among all minimal degree key polynomials for $\mu.$
	 
	For  valuations $\mu$ and $\nu$ on $K[x],$ taking values in $\Gamma,$ we say that $\mu\leq \nu$ if and only if 
	$$\mu(f)\leq \nu(f),~\forall f\in K[x].$$
	\begin{definition}
		Let $\mu<\nu.$ The set of all monic polynomials $g\in K[x]$ of minimal degree (say) $d'$ such that $\mu(g)<\nu(g),$ denoted  by $\Phi(\mu,\nu),$ is called the {\bf tangent direction} of $\mu,$ and  $\deg(\Phi(\mu,\nu)):=d'.$ 
\end{definition}

 The set $\Phi(\mu,\nu)\subset \operatorname{KP}(\mu)$ (see \cite[Theorem 1.15]{V}), which implies that $\operatorname{KP}(\mu)\neq\emptyset$, whenever $\mu<\nu$ for some valuation $\nu$ on $K[x].$
 
 \begin{corollary}(\cite[Corollary 2.5]{EN1})\label{2.5}
 	Let $\mu<\nu$ be as above, and let $\rho$ be another valuation. Then
 	\begin{enumerate}[(i)]
 		\item $\Phi(\mu,\nu)=[\phi]_\mu=\{\psi\in K[x]\mid \psi\sim_{\mu}\phi\},$ for all $\phi\in\Phi(\mu,\nu),$
 		\item If $\mu<\rho<\nu$, then $\Phi(\mu,\nu)=\Phi(\mu,\rho)$.  In particular, $\mu(f)=\nu(f)$ if and only if $\mu(f)=\rho(f),$ for all $f\in K[x].$
 	\end{enumerate}
 \end{corollary}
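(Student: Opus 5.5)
The plan is to work directly from the definition of the tangent direction, together with the fact recalled above that $\Phi(\mu,\nu)\subset\operatorname{KP}(\mu)$ \cite[Theorem 1.15]{V}. I will also use two standard facts about key polynomials from \cite{V,EN1,EN4}, which I assume rather than reprove:
\begin{enumerate}[(a)]
\item if $\mu<\nu$ and $\phi\in\Phi(\mu,\nu)$, then for every $f\in K[x]$ we have $\mu(f)<\nu(f)$ if and only if $\phi\mid_\mu f$;
\item if $\phi,\chi\in\operatorname{KP}(\mu)$ and $\phi\mid_\mu\chi$, then $\phi\sim_\mu\chi$, and in particular $\deg\phi=\deg\chi$.
\end{enumerate}
Fact (b) holds because $\mu$-irreducibility makes the initial form of $\chi$ generate a prime, and $\mu$-minimality forces equality of degrees. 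I read $[\phi]_\mu$ as the class of $\phi$ inside $\operatorname{KP}(\mu)$ under $\mu$-equivalence, as in \cite{EN1}.

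For (i), let $\phi\in\Phi(\mu,\nu)$ with $d=\deg\phi$. For the inclusion $\Phi(\mu,\nu)\subset[\phi]_\mu$, take $\psi\in\Phi(\mu,\nu)$. Both $\phi$ and $\psi$ are monic of degree $d$, so $\deg(\phi-\psi)<d$, and minimality of $d$ gives $\mu(\phi-\psi)=\nu(\phi-\psi)$. Suppose $\mu(\phi)\ne\mu(\psi)$, say $\mu(\phi)<\mu(\psi)$. Then
\[\mu(\phi-\psi)=\mu(\phi)<\min\{\nu(\phi),\nu(\psi)\}\le\nu(\phi-\psi),\]
a contradiction. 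So $\mu(\phi)=\mu(\psi)$, and
\[\mu(\phi-\psi)=\nu(\phi-\psi)\ge\min\{\nu(\phi),\nu(\psi)\}>\mu(\phi).\]
Hence $\psi\sim_\mu\phi$.

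For the reverse inclusion, let $\psi\in\operatorname{KP}(\mu)$ with $\psi\sim_\mu\phi$. By (b), $\psi$ is monic of degree $d$, so $\deg(\psi-\phi)<d$ and $\mu(\psi-\phi)=\nu(\psi-\phi)>\mu(\phi)$. Therefore
\[\nu(\psi)\ge\min\{\nu(\phi),\nu(\psi-\phi)\}>\mu(\phi)=\mu(\psi).\]
Since no monic polynomial of degree $<d$ has $\mu<\nu$, this gives $\psi\in\Phi(\mu,\nu)$.

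For (ii), let $\mu<\rho<\nu$, and fix $\phi\in\Phi(\mu,\nu)$ and $\chi\in\Phi(\mu,\rho)$.
\begin{enumerate}[(1)]
\item \emph{Degree comparison.} Every $g$ of degree $<\deg\phi$ satisfies $\mu(g)=\nu(g)$, and $\mu\le\rho\le\nu$ forces $\mu(g)=\rho(g)$. Hence $\deg\chi\ge\deg\phi$.
\item \emph{Comparing $\chi$ and $\phi$.} We have $\mu(\chi)<\rho(\chi)\le\nu(\chi)$, so (a) applied to the pair $\mu<\nu$ gives $\phi\mid_\mu\chi$.
\item \emph{Conclusion of the first claim.} Both $\phi$ and $\chi$ lie in $\operatorname{KP}(\mu)$, so (b) gives $\chi\sim_\mu\phi$. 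By (i) applied to $\mu<\rho$ and to $\mu<\nu$,
\[\Phi(\mu,\rho)=[\chi]_\mu=[\phi]_\mu=\Phi(\mu,\nu).\]
\item \emph{The ``in particular''.} Fact (a) applied to both pairs, with the common key polynomial $\phi$, shows that each of $\mu(f)<\nu(f)$ and $\mu(f)<\rho(f)$ is equivalent to $\phi\mid_\mu f$. Since $\mu\le\rho,\nu$, this is exactly the statement that $\mu(f)=\nu(f)$ if and only if $\mu(f)=\rho(f)$.
\end{enumerate}

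The main obstacle is fact (a), the characterization of $\mu(f)<\nu(f)$ by $\mu$-divisibility by an element of $\Phi(\mu,\nu)$; everything else is elementary. If it had to be proved here, I would expand $f$ in $\phi$-adic form, $f=\sum_i f_i\phi^i$. Each $f_i$ has degree $<\deg\phi$, so $\nu(f_i)=\mu(f_i)$, while $\nu(\phi)>\mu(\phi)$. It follows that $\nu(f)>\mu(f)$ exactly when the index $0$ is not in $S_{\mu,\phi}(f)$. That condition is in turn equivalent to $\phi\mid_\mu f$, by the $\mu$-minimality of $\phi$.
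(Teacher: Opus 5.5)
Your proof is correct. The paper gives no argument of its own here and only cites \cite[Corollary 2.5]{EN1}; your derivation from Vaqui\'e's characterization (a) of $\mu(f)<\nu(f)$ by $\phi\mid_\mu f$, together with the fact (b) that $\mu$-divisibility between key polynomials forces $\mu$-equivalence, is the standard way this corollary is obtained. You were also right to read $[\phi]_\mu$ as a class inside $\operatorname{KP}(\mu)$: the literal set $\{\psi\in K[x]\mid \psi\sim_\mu\phi\}$ generally contains non-monic polynomials such as $u\phi$ with $u\in K$ and $0<v(u-1)<\infty$, and these cannot belong to $\Phi(\mu,\nu)$.
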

 
 For any valuation $\mu$ on $K[x]$, we now define the degree of $\mu,$ denoted by $\deg(\mu)$, as follows:
 \begin{itemize}
 	\item If $\operatorname{KP}(\mu)\neq\emptyset,$ then we define $\deg (\mu):=\min_{\phi\in\operatorname{KP}(\mu)}\deg \phi.$
 	\item  If $\text{supp}(\mu)=\phi K[x]$ for some $\phi\in K[x],$ then we define $\deg (\mu):=\deg\phi.$
 	\item  If $\operatorname{KP}(\mu)=\emptyset$ and $\text{supp}(\mu)=\{0\}$, i.e., $\mu$ is valuation-algebraic, then we define
 	\begin{align*}
 		\deg(\mu):=\sup(\text{Deg}((-\infty,\mu)_{\Gamma}))\in\mathbb{N}\infty,
 	\end{align*} where 
 	\begin{align}
 		(-\infty,\mu)_{\Gamma}=\{\rho:K[x]\longrightarrow \Gamma\infty\mid \rho\ \text{is valuation},\rho\mid_K=v,\rho<\mu\},
 	\end{align} and for any open interval $I$ of valuations, $\text{Deg}(I)=\{\deg(\rho)\mid  \rho\in I\}$ (see \cite[Section 2C]{EN1}). 
 	\end{itemize}

\subsection{Mac Lane-Vaqui\'e chains}

We now define augmentations and Mac Lane-Vaqui\'e (MLV) chains of a valuation on $K[x].$
\begin{definition}
	Let $\phi$ be a key polynomial for a valuation $\mu$ and $\gamma\in\Gamma\infty$ such that $\gamma> \mu(\phi)$. The map $\nu: K[x]\longrightarrow \Gamma\infty$ defined by 
	$$\nu(f):=\min_{i\geq 0}\{\mu(f_i)+i\gamma\},$$ where
	$\sum_{i\geq 0}f_i \phi^i, $ $\deg f_i<\deg \phi,$ is the $\phi$-expansion of $f\in K[x],$  gives a valuation on $K[x]$  called the {\bf ordinary augmentation of} $\mu,$ defined by $\phi$ and $\gamma,$ and is  denoted  by $[\mu; \phi,\gamma].$
\end{definition}
Note that $\nu(\phi)=\gamma,$ i.e., $\mu(\phi)<\nu(\phi).$ If $\gamma<\infty,$ then $\phi\in\operatorname{KP}(\nu)$ of minimal degree (\cite[Proposition 2.1]{EN1}), otherwise, $\text{supp}(\nu)=\phi K[x].$ In either case, $\deg(\Phi(\mu,\nu))=\deg (\nu)=\deg\phi.$ 

Let $\mathbf{A}$ be a well-ordered set without a last element. A family $\mathcal{C}=(\rho_i)_{i\in\mathbf{A}}$ of valuations on $K[x],$ taking values in $\Gamma$, is said to be a {\bf continuous family}, parametrized by $\mathbf{A}$, if:
\begin{itemize}
\item The map $i\longrightarrow \rho_i$ is an isomorphism of totally ordered sets.
\item The set $\{\deg(\rho_i)\}_{i\in\mathbf{A}}$ is stable, i.e., there exists $i_0\in\mathbf{A}$ such that $\deg(\rho_i)=\deg(\rho_{i_0})$ for all $i\geq i_0$. We denote this stable degree by $\deg(\mathcal{C})$.
\end{itemize}
A polynomial $f$ in $K[x]$ is said to be {\bf $\mathcal{C}$-stable}  if there exists some index $i_0\in\mathbf{A}$ such that
$$\rho_i(f)=\rho_{i_0}(f),~ \forall~ i\geq i_0,$$  and this stable value is denoted by $\rho_\mathcal{C}(f).$ Otherwise, $f$ is said to be {\bf $\mathcal{C}$-unstable}. By Corollary \ref{2.5} (ii), we have $f\in K[x]$  is $\mathcal{C}$-unstable if and only if  
$$\rho_i(f)<\rho_j(f),\hspace{5pt} \forall~ i<j\in\mathbf{A}.$$  
We denote 
$$m_{\infty}:=\min\{\deg f\mid f\in K[x],~\text{$f$ is $\mathcal{C}$-unstable}\}.$$ If  all polynomials are $\mathcal{C}$-stable, then we say that $\mathcal{C}$ has a {\bf stable limit} and set $m_{\infty}=\infty$. In this case, $\rho_{\mathcal{C}}$ is a valuation on $K[x],$ called the {\bf stable limit} of $\mathcal{C}$ (see \cite[Proposition 3.1]{EN1}). 

\begin{definition}
	A monic polynomial $\phi\in K[x]$ is said to be a {\bf limit key polynomial} for the family $\mathcal{C}$ if it is $\mathcal{C}$-unstable and $\deg\phi=m_{\infty}<\infty.$
\end{definition}

The set of all limit key polynomials for $\mathcal{C}$ is denoted by $\operatorname{KP}_{\infty}(\mathcal{C}).$  Since the product of stable polynomials is stable, all limit key polynomials are irreducible in $K[x].$

%
%
%
\begin{definition}
	Let   $\phi$ be any limit key polynomial for a continuous family $\mathcal{C}=(\rho_i)_{i\in\mathbf{A}}$ of valuations on $K[x]$ and  $\gamma\in \Gamma\infty$ such that $\gamma>\rho_i(\phi)$ for all $i\in\mathbf{A}.$ Then the map $\nu: K[x]\longrightarrow\Gamma\infty$ defined by
	$$\nu(f):=\min_{i\geq 0}\{\rho_\mathcal{C}(f_i)+i\gamma\},$$
	where  $\sum_{i\geq 0} f_i\phi^i,$ $\deg f_i<\deg \phi,$ is the $\phi$-expansion of $f\in K[x],$ gives a valuation on $K[x]$ and is called  the {\bf limit augmentation} of $\mathcal{C},$  denoted by $\nu=[\mathcal{C}; \phi, \gamma].$ 
\end{definition}
  Note that $\nu(\phi)=\gamma$ and $\rho_i<\nu$ for all $i\in\mathbf{A}.$ Also if $\gamma<\infty$, then $\phi$ is a key polynomial for $\nu$ of minimal degree  \cite[Proposition 3.5]{EN1}, otherwise, $\text{supp}(\nu)=\phi K[x].$ In either case, $\deg (\nu)=\deg\phi.$

\begin{definition}
	Let $\mathcal{C}=(\rho_i)_{i\in\mathbf{A}}$ be a continuous family of valuations on $K[x]$ and let $\phi\in\operatorname{KP}_{\infty}(\mathcal{C})$. Consider the supremum \[\gamma_{\mathcal{C}}=\sup\{\rho_i({\phi})\mid i\in\mathbf{A}\}.\]
			The limit augmentation $\nu_{\mathcal{C}}=[\mathcal{C};\phi,\gamma_{\mathcal{C}}],$ taking values in $\mathbb{R}_{sme}$, an ordered group containing $\Gamma$ (for the definition and existence of $\mathbb{R}_{sme}$, we refer the reader to \cite[Section 5.3]{EN3}), is called the {\bf minimal limit augmentation} of $\mathcal{C}$. By \cite[Lemma 4.7]{Ma2}, the value $\gamma_{\mathcal{C}}$ and the valuation $\nu_{\mathcal{C}}$ do not depend on the choice of the limit key polynomial  $\phi.$ 
\end{definition}

\begin{lemma}(\cite[Section 7.3.3]{Ma2})\label{vt1}
Let $\nu_{\mathcal{C}}$ be the minimal limit augmentation of a continuous family $\mathcal{C}$. Then, $\nu_{\mathcal{C}}$ is value-transcendental and $\operatorname{KP}(\nu_{\mathcal{C}})=\operatorname{KP}_{\infty}(\mathcal{C}).$ Moreover, every limit augmentation $\nu=[\mathcal{C};\phi,\gamma],$ for $\gamma>\gamma_{\mathcal{C}},$ can be obtained as the ordinary augmentation $\nu=[\nu_{\mathcal{C}};\phi,\gamma].$ 
\end{lemma}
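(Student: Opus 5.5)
The plan is to prove the three assertions in the order stated, starting from the explicit formula $\nu_{\mathcal{C}}(f)=\min_i\{\rho_{\mathcal{C}}(f_i)+i\gamma_{\mathcal{C}}\}$ for the $\phi$-expansion $f=\sum f_i\phi^i$. Two facts will be used throughout:
\begin{itemize}
\item[(a)] Every $f_i$ has $\deg f_i<\deg\phi=m_\infty$, so it is $\mathcal{C}$-stable. Hence $\rho_{\mathcal{C}}(f_i)\in\Gamma$, and $\nu_{\mathcal{C}}(f_i)=\rho_{\mathcal{C}}(f_i)$.
\item[(b)] Since $\phi$ is $\mathcal{C}$-unstable, the values $\rho_i(\phi)$ strictly increase. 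So the supremum $\gamma_{\mathcal{C}}$ is not attained, and by the construction of $\mathbb{R}_{sme}$ it is a genuinely new element lying outside $\Gamma$.
\end{itemize}

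\textbf{Value-transcendence.} First I will check that $\gamma_{\mathcal{C}}$ is not even in the divisible hull of $\Gamma$. If $m\gamma_{\mathcal{C}}=\beta\in\Gamma$ for some $m\ge1$, then $\gamma_{\mathcal{C}}$ would be $\beta/m$, an element of the divisible hull. But in $\mathbb{R}_{sme}$ the cut-type elements introduced as suprema are chosen outside the divisible hull, so this is impossible. This step depends on the precise description of $\mathbb{R}_{sme}$ in \cite[Section 5.3]{EN3}, and I expect it to be the main technical obstacle. Granting it, $\Gamma_{\nu_{\mathcal{C}}}$ contains $\gamma_{\mathcal{C}}$, which is rationally independent of $\Gamma_v$. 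Then $\text{rr}(\Gamma_{\nu_{\mathcal{C}}}/\Gamma_v)=1$ and $\nu_{\mathcal{C}}$ is value-transcendental.

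\textbf{Equality of key polynomials.} Since $\gamma_{\mathcal{C}}<\infty$, $\phi$ is a key polynomial of minimal degree for $\nu_{\mathcal{C}}$ by \cite[Proposition 3.5]{EN1}. For a value-transcendental valuation, the residue field of the graded algebra is algebraic over $k_v$. Consequently $\operatorname{KP}(\nu_{\mathcal{C}})$ is a single $\nu_{\mathcal{C}}$-equivalence class, namely $[\phi]_{\nu_{\mathcal{C}}}$; this is a known fact from Nart's theory that I will cite. It therefore remains to show that a monic $\psi$ of degree $m_\infty$ lies in $[\phi]_{\nu_{\mathcal{C}}}$ exactly when $\psi$ is $\mathcal{C}$-unstable.

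Write $\psi=\phi+h$ with $\deg h<m_\infty$. By (a), $h$ is stable with value $\rho_{\mathcal{C}}(h)=\nu_{\mathcal{C}}(h)\in\Gamma$. Since $\gamma_{\mathcal{C}}\notin\Gamma$, either $\rho_{\mathcal{C}}(h)>\gamma_{\mathcal{C}}$ or $\rho_{\mathcal{C}}(h)<\gamma_{\mathcal{C}}$.
\begin{itemize}
\item[(i)] If $\rho_{\mathcal{C}}(h)>\gamma_{\mathcal{C}}$, then $\rho_i(\psi)=\rho_i(\phi)$ for all large $i$. So $\psi$ is unstable, and also $\psi\sim_{\nu_{\mathcal{C}}}\phi$.
\item[(ii)] If $\rho_{\mathcal{C}}(h)<\gamma_{\mathcal{C}}$, then $\rho_i(\psi)=\rho_{\mathcal{C}}(h)$ eventually. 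So $\psi$ is stable, and $\psi\not\sim_{\nu_{\mathcal{C}}}\phi$ because $\nu_{\mathcal{C}}(\psi)=\rho_{\mathcal{C}}(h)<\gamma_{\mathcal{C}}$.
\end{itemize}
Together (i) and (ii) give $\operatorname{KP}(\nu_{\mathcal{C}})=\operatorname{KP}_\infty(\mathcal{C})$.

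\textbf{Limit augmentations as ordinary augmentations.} Let $\gamma>\gamma_{\mathcal{C}}=\nu_{\mathcal{C}}(\phi)$. Since $\phi\in\operatorname{KP}(\nu_{\mathcal{C}})$, the ordinary augmentation $[\nu_{\mathcal{C}};\phi,\gamma]$ is defined. Its value on $f$ is $\min_i\{\nu_{\mathcal{C}}(f_i)+i\gamma\}$, computed with the same $\phi$-expansion. By (a), this equals $\min_i\{\rho_{\mathcal{C}}(f_i)+i\gamma\}=[\mathcal{C};\phi,\gamma](f)$.
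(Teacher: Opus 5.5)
The paper gives no proof of this lemma; it quotes it from \cite[Section 7.3.3]{Ma2}, so there is no in-paper route to compare with. Judged on its own terms, your derivation is sound. Splitting $\psi=\phi+h$ with $h$ stable, and comparing $\rho_{\mathcal C}(h)$ with $\gamma_{\mathcal C}$, correctly matches the monic degree-$m_\infty$ members of $[\phi]_{\nu_{\mathcal C}}$ with the $\mathcal C$-unstable ones. This gives $\operatorname{KP}(\nu_{\mathcal C})=\operatorname{KP}_\infty(\mathcal C)$ once you add three cited facts:
\begin{itemize}
\item for a value-transcendental valuation, $\operatorname{KP}$ is a single class;
\item $\mu$-equivalent key polynomials have equal degree;
\item a monic polynomial of that degree that is $\mu$-equivalent to a key polynomial is itself one.
\end{itemize}
The last assertion is indeed just a comparison of the two defining formulas via $\nu_{\mathcal C}(f_i)=\rho_{\mathcal C}(f_i)$.

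The caveat is that all the substance lies in the properties of $\gamma_{\mathcal C}\in\mathbb{R}_{sme}$ that you take for granted, and you use two of them.
\begin{itemize}
\item First, $\rho_i(\phi)<\gamma_{\mathcal C}$ for all $i$, and every $\beta\in\Gamma$ with $\beta<\gamma_{\mathcal C}$ is exceeded by some $\rho_i(\phi)$. You use the second half silently in case (ii) to get $\rho_i(\psi)=\rho_{\mathcal C}(h)$ eventually.
\item Second, $\gamma_{\mathcal C}$ lies outside the divisible hull of $\Gamma$. You use this both for value-transcendence and to exclude $\rho_{\mathcal C}(h)=\gamma_{\mathcal C}$.
\end{itemize}
The second property cannot be extracted from the word ``supremum'' in some ordered group containing $\Gamma$. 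For example, over $(\mathbb{Q},v_p)$ with $p$ odd, take $\rho_n=w_{0,1-1/n}$ and $\phi=x$. If the supremum were realized as $1\in\Gamma$, then $[\mathcal C;x,1]=w_{0,1}$ would be residue-transcendental. It would also have key polynomials of degree $2$, such as $x^2-p^2u$ with $u$ not a square modulo $p$. Both of the first two assertions would then fail.

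So your proof is complete exactly to the extent that you quote from the construction in \cite[Section 5.3]{EN3} what that construction is designed to guarantee. Namely, the cut determined by $\{\rho_i(\phi)\}$ is realized by an element that is larger than every $\rho_i(\phi)$, smaller than every upper bound of them in $\Gamma$, and outside the divisible hull of $\Gamma$. You should state this as an imported input, not as a step still to be verified.
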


\begin{definition}\label{1.1.14}
	Let $\mu$ be a valuation on $K[x]$ admitting key polynomials. Then a {\bf continuous family of augmentations} of $\mu$ is a continuous family $\mathcal{C} =(\rho_i=[\mu;\chi_i,\beta_i])_{i\in\mathbf{A}},$ of ordinary augmentations of $\mu$,  indexed by a well-ordered set $\mathbf{A}$ without a last element.
	\end{definition}

Any continuous family $\mathcal{C}$ of augmentations of $\mu$ on $K[x]$ falls in one of the following three cases:
\begin{enumerate}[(i)]
	\item  It has a stable limit, if $m_{\infty}=\infty.$ 
	\item It is {\bf in-essential} if $\deg(\mathcal{C})=m_{\infty}<\infty.$
	\item It is  {\bf essential }  if $\deg(\mathcal{C})<m_{\infty}<\infty.$
\end{enumerate} 

\begin{remark}\label{cfr}
	 Arguing as in Lemma 4.11 of \cite{Ma2}, any essential continuous family of augmentations of $\mu$ can be replaced by an equivalent\footnote{ Any two continuous families are said to be equivalent if they are cofinal in each other.} family $\mathcal{C}=(\rho_i=[\mu;\chi_i,\beta_i])_{i\in\mathbf{A}}$ of augmentations of $\mu,$ that satisfies the following extra conditions:
\begin{enumerate}[(i)]
	\item For all $i$ in $\mathbf{A},$ $\chi_i\in \operatorname{KP}(\mu)$ have the same degree.
	\item For all $i<j$ in $\mathbf{A},$ $\beta_i<\beta_j.$
	\item For all $i<j$ in $\mathbf{A},$ $\chi_j\in\operatorname{KP}(\rho_i),$ 
	$\chi_j\not\sim_{\rho_i}\chi_i~\text{and}~ \rho_j=[\rho_i;\chi_j,\beta_j].$
\end{enumerate}	
\end{remark}
	The properties given below directly follow from the remark above.
	\begin{enumerate}[(i)]
		\item The mapping defined by $i\mapsto\beta_i$ and $i\mapsto\rho_i$ are isomorphisms of ordered sets  $\mathbf{A}$ and $\{\beta_i\mid i\in\mathbf{A}\},$ $\{\rho_i\mid i\in\mathbf{A}\},$ respectively.
		\item For all $i, j\in\mathbf{A},$ $\rho_i(\chi_j)=\min\{\beta_i,\beta_j\},$ i.e., all $\chi_i$ are $\mathcal{C}$-stable.
		\item $\Phi(\rho_i,\rho_j)=[\chi_j]_{\rho_i},$ $\forall$ $i<j\in\mathbf{A}.$
		\item All valuations $\rho_i$ are residue-transcendental.
		\item All the value groups $\Gamma_{\rho_i}$ coincide and  the common value group is denoted by $\Gamma_{\mathcal{C}}.$ 
	\end{enumerate}


The following result establishes the existence of continuous families of augmentations of $\mu$ of the above type, whenever $\mu$ is valuation-transcendental.
\begin{proposition}(\cite[Proposition 3.3]{EN1})\label{cf1}
	Let $\nu$ be a valuation on $K[x]$ such that $\mu<\nu.$ Suppose that the set $\mathbb{A}=\nu(\Phi(\mu,\nu))$ does not contain a maximal element in $\Gamma_{\nu}\infty$. For all $\alpha\in\mathbb{A},$ choose some polynomial $\chi_{\alpha}\in\Phi(\mu,\nu)$ such that $\nu(\chi_{\alpha})=\alpha,$ and let $\rho_\alpha=[\mu;\chi_\alpha,\alpha].$ Then, $\mathcal{A}=(\rho_{\alpha})_{\alpha\in\mathbb{A}}$ is a continuous family of augmentations of $\mu$ satisfying the properties of Remark \ref{cfr}. 
\end{proposition}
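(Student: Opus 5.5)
The plan is to exploit two facts: every $\chi\in\Phi(\mu,\nu)$ has the same minimal degree $d'=\deg\Phi(\mu,\nu)$, and $\mu(g)=\nu(g)$ whenever $\deg g<d'$. By \cite[Theorem 1.15]{V}, $\Phi(\mu,\nu)\subset\operatorname{KP}(\mu)$. Since $\alpha=\nu(\chi_\alpha)>\mu(\chi_\alpha)$, the ordinary augmentation $\rho_\alpha=[\mu;\chi_\alpha,\alpha]$ is well defined, and $\deg(\rho_\alpha)=\deg\chi_\alpha=d'$ for all $\alpha$. So the degrees are stable and condition (i) of Remark \ref{cfr} holds.

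\textbf{Step 1: $\rho_\alpha\le\nu$.} Take the $\chi_\alpha$-expansion $f=\sum f_i\chi_\alpha^i$. Each coefficient has $\deg f_i<d'$, so $\nu(f_i)=\mu(f_i)$. Hence
\[
\nu(f)\ge\min_i\{\nu(f_i)+i\alpha\}=\min_i\{\mu(f_i)+i\alpha\}=\rho_\alpha(f).
\]

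\textbf{Step 2: the key computation, $\rho_\alpha(\chi_\beta)=\alpha$ for $\alpha<\beta$ in $\mathbb{A}$.} I expect this to be the main obstacle. Write $\chi_\beta=\chi_\alpha+a$ with $\deg a<d'$. Since $\nu(\chi_\alpha)=\alpha<\beta=\nu(\chi_\beta)$, we get $\nu(a)=\alpha$. Because $\deg a<d'$, also $\mu(a)=\alpha$. The $\chi_\alpha$-expansion of $\chi_\beta$ is $a+1\cdot\chi_\alpha$, so
\[
\rho_\alpha(\chi_\beta)=\min\{\mu(a),\alpha\}=\alpha<\beta=\nu(\chi_\beta).
\]
Two consequences follow. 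First, $\rho_\alpha<\nu$. Second, $\rho_\alpha$ agrees with $\mu$, hence with $\nu$, on polynomials of degree $<d'$, so $\chi_\beta\in\Phi(\rho_\alpha,\nu)\subset\operatorname{KP}(\rho_\alpha)$. Moreover $\rho_\alpha(\chi_\beta-\chi_\alpha)=\mu(a)=\alpha=\rho_\alpha(\chi_\alpha)$, so $\chi_\beta\not\sim_{\rho_\alpha}\chi_\alpha$.

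\textbf{Step 3: $\rho_\beta=[\rho_\alpha;\chi_\beta,\beta]$.} This augmentation makes sense because $\beta>\rho_\alpha(\chi_\beta)$. Both $[\rho_\alpha;\chi_\beta,\beta]$ and $[\mu;\chi_\beta,\beta]$ are computed from the $\chi_\beta$-expansion, whose coefficients have degree $<d'$, and on such coefficients $\rho_\alpha=\mu$. So the two valuations coincide. Therefore $\rho_\alpha<\rho_\beta$, which gives (iii) of Remark \ref{cfr}; (ii) is immediate since the augmentation values are the indices $\alpha<\beta$. The map $\alpha\mapsto\rho_\alpha$ is therefore a strictly increasing embedding, hence an isomorphism onto its image. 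The remaining listed properties follow directly: the values $\rho_\alpha(\chi_\beta)=\min\{\alpha,\beta\}$ are exactly those computed in Step 2, and residue-transcendence and the common value group come from the standard theory of ordinary augmentations.

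\textbf{Well-ordering of the index set.} The definition of a continuous family requires a well-ordered index set. If $\mathbb{A}$ is not itself well-ordered, I would replace it by a well-ordered cofinal subset. Such a subset exists in any totally ordered set without a maximum, and it yields an equivalent family with all the same properties, since everything above is checked pairwise.
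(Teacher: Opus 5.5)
Your proof is correct. The paper only quotes this result from \cite{EN1} without proof, and your argument is essentially the standard direct verification. Its core is the identity $\mu(\chi_\beta-\chi_\alpha)=\nu(\chi_\beta-\chi_\alpha)=\alpha$ for $\alpha<\beta$, which gives $\rho_\alpha(\chi_\beta)=\alpha$, $\chi_\beta\in\Phi(\rho_\alpha,\nu)\subset\operatorname{KP}(\rho_\alpha)$, $\chi_\beta\not\sim_{\rho_\alpha}\chi_\alpha$ and $[\rho_\alpha;\chi_\beta,\beta]=[\mu;\chi_\beta,\beta]$.

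Your closing remark is also right. $\mathbb{A}$ is generally not well-ordered, so the statement as written implicitly allows totally ordered index sets without a maximum. Passing to a cofinal well-ordered subset is the right way to reconcile it with this paper's ``well-ordered'' definition of continuous families.
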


\begin{remark}\label{r1}
The family $\mathcal{A}$ above is independent of the choice of $\chi_\alpha$. Indeed, if we choose another polynomial $\chi\in \Phi(\mu,\nu)$ with $\nu(\chi)=\alpha,$ then for $a=\chi-\chi_\alpha,$ we have $\nu(a)\geq \min\{\nu(\chi),\nu(\chi_\alpha)\}=\alpha.$ Also, since $\deg a<\deg\chi=\deg(\Phi(\mu,\nu)),$ we have $\mu(a)=\nu(a),$ which implies that $\mu(\chi-\chi_\alpha)\geq \alpha.$ Therefore, by the uniqueness of ordinary augmentations (see \cite[Lemma 2.8]{EN1}), we have $\rho_\alpha=[\mu;\chi_\alpha,\alpha]=[\mu;\chi,\alpha].$	
\end{remark}

	If $\nu=[\mathcal{C};\phi,\gamma]$ is a limit augmentation of a continuous family $\mathcal{C}$ of augmentations of $\mu$, then we say that $\mu\longrightarrow \nu$ is a limit augmentation.

\begin{lemma}
	Let $\mu\longrightarrow\nu$ be a limit augmentation such that $\nu=[\mathcal{C};\phi,\gamma]$, where $\mathcal{C}=(\rho_i)_{i\in\mathbf{A}}$ is an essential continuous family of augmentations of $\mu.$ Then $\nu(\Phi(\mu,\nu))$ does not contain a maximal element.
\end{lemma}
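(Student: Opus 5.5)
Suppose, for contradiction, that $\nu(\Phi(\mu,\nu))$ has a maximal element, say $\alpha_0=\nu(\chi_0)$ with $\chi_0\in\Phi(\mu,\nu)$. The plan is to produce a valuation $\rho=[\mu;\chi_0,\alpha_0]$ with $\mu<\rho\le\nu$, show that it dominates every $\rho_i$ in $\mathcal{C}$, and then contradict essentiality: the degrees of $\mathcal{C}$ must stabilize at $m_\infty$, whereas essential means $\deg(\mathcal{C})<m_\infty$.

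First, since $\mathcal{C}$ is essential, Remark \ref{cfr} lets us replace $\mathcal{C}$ by an equivalent family $(\rho_i=[\mu;\chi_i,\beta_i])_{i\in\mathbf{A}}$ with all $\chi_i\in\operatorname{KP}(\mu)$ of the same degree and $\beta_i$ strictly increasing. Replacing $\mathcal{C}$ by this family does not change $\nu$, because equivalent families have the same stable values and the same unstable polynomials, so the limit augmentation defined by $\phi$ and $\gamma$ is unchanged. Since $\mu<\rho_i<\nu$, Corollary \ref{2.5}(ii) gives $\Phi(\mu,\nu)=\Phi(\mu,\rho_i)$. By the properties after Remark \ref{cfr}, $\chi_i\in\Phi(\mu,\rho_i)$: indeed $\rho_i(\chi_i)=\beta_i>\mu(\chi_i)$, and $\chi_i$ has the minimal degree. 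Hence $\chi_i\in\Phi(\mu,\nu)$, and because $\rho_i<\nu$ we also have $\beta_i=\rho_i(\chi_i)\le\nu(\chi_i)$.

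Next, define $\rho=[\mu;\chi_0,\alpha_0]$; this is a legitimate augmentation because $\chi_0\in\Phi(\mu,\nu)\subset\operatorname{KP}(\mu)$ and $\alpha_0=\nu(\chi_0)>\mu(\chi_0)$. We show $\rho\le\nu$ by a $\chi_0$-expansion argument. For $f=\sum f_k\chi_0^k$ with $\deg f_k<\deg\chi_0=\deg(\Phi(\mu,\nu))$, minimality of degree gives $\mu(f_k)=\nu(f_k)$. Therefore
\[
\nu(f)\ge\min_k\{\nu(f_k)+k\alpha_0\}=\rho(f).
\]

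The main step is to show $\rho_i\le\rho$ for every $i$. Since $\chi_i\in\Phi(\mu,\nu)$, maximality of $\alpha_0$ gives $\nu(\chi_i)\le\alpha_0$. Now $\chi_i-\chi_0$ has degree less than $\deg\chi_0$, so its $\mu$- and $\nu$-values agree, and
\[
\mu(\chi_i-\chi_0)=\nu(\chi_i-\chi_0)\ge\min\{\nu(\chi_i),\alpha_0\}=\nu(\chi_i)\ge\beta_i.
\]
The same bound holds for $\rho$: $\rho(\chi_i-\chi_0)=\mu(\chi_i-\chi_0)\ge\beta_i$. Since $\rho(\chi_0)=\alpha_0\ge\beta_i$, we get $\rho(\chi_i)\ge\beta_i$. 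For $f$ expanded in powers of $\chi_i$ with coefficients of small degree (where $\rho$ agrees with $\mu$), this yields $\rho(f)\ge\min\{\mu(f_k)+k\beta_i\}=\rho_i(f)$. The justification that $\rho$ equals $\mu$ on polynomials of degree below $\deg\chi_0$ is straightforward from the definition of ordinary augmentation, and the subtlety one must watch is this comparison between $\rho$ and $\rho_i$.

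With $\rho_i\le\rho$ for all $i$ and $\rho\le\nu$, a polynomial of degree $m_\infty$ is $\mathcal{C}$-unstable with values $\rho_i(\phi)$ strictly increasing and bounded by $\rho(\phi)$. This alone is not a contradiction, so we use degrees. The valuation $\rho$ has degree $\deg\chi_0=\deg\chi_i=\deg(\mathcal{C})$. Moreover $\rho_i<\rho$ for all $i$, since the $\beta_i$ have no maximum and $\beta_i<\beta_j\le\alpha_0$. By Corollary \ref{2.5}(ii), $\Phi(\rho_i,\rho)=\Phi(\rho_i,\nu)$. We expect $\Phi(\rho_i,\nu)$ to contain a limit key polynomial for large $i$, and hence to have degree $m_\infty$; this must be checked, since $\nu$ is a limit augmentation through $\phi$. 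On the other hand, $\Phi(\rho_i,\rho)$ has degree at most $\deg\chi_0$, because $\rho_i(\chi_0)\le\beta_i<\alpha_0$ would show $\chi_0$ increases. This gives $m_\infty\le\deg(\mathcal{C})$, contradicting essentiality. Obtaining the inequality $\rho_i(\chi_0)<\alpha_0$ cleanly is the delicate point, and it follows from $\chi_0\sim$-comparisons with the $\chi_j$ for $j>i$.
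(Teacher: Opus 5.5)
\textbf{There is a genuine gap in your final step.} Your Steps 1--3 are correct. In particular, the inequality
\[
\mu(\chi_i-\chi_0)=\nu(\chi_i-\chi_0)\ge\nu(\chi_i)\ge\beta_i
\]
is correct, and it is exactly where the maximality of $\alpha_0$ enters.

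The closing argument, however, fails. You claim that $\Phi(\rho_i,\nu)$ has degree $m_\infty$ for large $i$. This is false for every essential family. For $j>i$ you have $\rho_i(\chi_j)=\beta_i<\beta_j\le\nu(\chi_j)$. So by Corollary \ref{2.5}(ii) and the properties following Remark \ref{cfr}, $\Phi(\rho_i,\nu)=\Phi(\rho_i,\rho_j)=[\chi_j]_{\rho_i}$, which has degree $\deg(\mathcal{C})$, not $m_\infty$. The limit key polynomial $\phi$ never lies in this tangent direction, since $\deg\phi>\deg(\mathcal{C})$. Comparing $\Phi(\rho_i,\rho)$ with $\Phi(\rho_i,\nu)$ therefore gives two sets of the same degree $\deg(\mathcal{C})$. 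No inequality $m_\infty\le\deg(\mathcal{C})$ comes out of it, and the contradiction is never reached.

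\textbf{The fix is already in your Step 3:} evaluate $\rho_i$ on $\chi_0$ directly. The $\chi_i$-expansion of $\chi_0$ is $\chi_0=\chi_i+(\chi_0-\chi_i)$, and you have $\mu(\chi_0-\chi_i)\ge\beta_i$. Hence $\rho_i(\chi_0)=\min\{\beta_i,\mu(\chi_0-\chi_i)\}=\beta_i$ for every $i\in\mathbf{A}$. Since the $\beta_i$ are strictly increasing, $\chi_0$ is $\mathcal{C}$-unstable of degree $\deg(\mathcal{C})$. This gives $m_\infty\le\deg(\mathcal{C})$, contradicting essentiality. The auxiliary valuation $\rho=[\mu;\chi_0,\alpha_0]$ and the comparisons $\rho_i\le\rho\le\nu$ then become unnecessary. (Passing to the equivalent family of Remark \ref{cfr} changes neither $m_\infty$ nor $\deg(\mathcal{C})$, as you essentially noted.)

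For comparison, the paper also introduces $\eta=[\mu;\chi_0,\alpha_0]$ and shows $\mu<\eta<\nu$ with $\deg(\eta)=\deg(\mathcal{C})$. It then invokes the description of the interval $(\mu,\nu)$ from Lemma 3.8 of \cite{EN1} to write $\eta=[\mu;\chi_i,\beta]$ with $\beta<\beta_i$. Uniqueness of ordinary augmentations gives $\alpha_0=\beta<\beta_i\le\nu(\chi_i)$, contradicting maximality. The corrected version of your argument reaches the contradiction without that structural lemma.
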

\begin{proof}
	Assume to the contrary that $\mathbb{A}=\nu(\Phi(\mu,\nu))$ contains a maximal element (say) $\alpha=\nu(\chi),$ for some $\chi\in\Phi(\mu,\nu).$ Define an ordinary augmentation $\eta=[\mu;\chi,\alpha].$ For any $i\in\mathbf{A}$, since $\rho_i=[\mu;\chi_i,\beta_i]$, we have $\Phi(\mu,\rho_i)=\Phi(\mu,\nu)=[\chi_i]_{\mu},$ i.e., all key polynomials $\chi_i$ are $\mu$-equivalent. Consequently, the value $\beta_{min}:=\mu(\chi_i)$ is independent of $i\in\mathbf{A}$. We have \[\deg(\eta)=\deg(\Phi(\mu,\nu))=\deg(\mathcal{C})<\deg\phi=\deg(\nu).\] Hence $\mu<\eta<\nu,$ i.e., $\eta\in(\mu,\nu)_{\Gamma}$ with $\deg(\eta)=\deg(\mathcal{C})<\deg\phi.$ 
	
	If $S$ is the initial segment of $\Gamma_{>\beta_{min}}=\{\gamma\in\Gamma\mid\gamma>\beta_{\min}\}$ generated by the set $\{\beta_i\mid i\in\mathbf{A}\},$ then by Lemma 3.8 of \cite{EN1}, $\eta=\rho_\beta=[\mu;\chi_i,\beta]$ for some $\beta\in S$ and $i\in\mathbf{A}$ with $\beta_i>\beta.$ By the uniqueness of ordinary augmentation (see \cite[Lemma 2.8]{EN1}), we have $\alpha=\beta$, which implies $\alpha<\beta_i$, contradicting the maximality of $\alpha.$ 
	\end{proof}
	
	\begin{remark}\label{lmr}
		With notation and hypothesis as in the above lemma, the family $\mathcal{A},$ given in Proposition \ref{cf1}, is also a continuous family of augmentations of $\mu.$ In fact, $\mathcal{C}$ is equivalent to $\mathcal{A}$. By Remark \ref{cfr}, without loss of generality, we can assume that $\deg(\rho_i)=\deg(\mathcal{C})$ for every $i\in\mathbf{A},$ which implies that $\deg(\mathcal{C})=\deg(\Phi(\mu,\nu))=\deg(\mathcal{A})$. Since the family $\mathcal{A}$ is independent of the choice of any key polynomial (see Remark \ref{r1}), $\mathcal{C}$ is a subset of $\mathcal{A}.$ Now, by Lemma 3.6 of \cite{EN1}, we have that $\mathcal{C}$ is a cofinal subset of $\mathcal{A}$. Also, by the uniqueness of limit augmentation (see \cite[Lemma 3.7]{EN1}),  \[\nu=[\mathcal{C};\phi,\gamma]=[\mathcal{A};\phi,\gamma].\]
	\end{remark}

\vspace{.20pt}

In 2021, Nart \cite{EN1} introduced the Mac Lane-Vaqui\'e chain for an arbitrary valuation $\mu$ on $K[x].$

\begin{definition}
	A finite or countably infinite chain of mixed augmentations 
	\begin{align*}
			(v\xrightarrow{\phi_0,\gamma_0})\mu_0\xrightarrow{\phi_1,\gamma_1} \mu_1\xrightarrow{\phi_2,\gamma_2}\cdots \longrightarrow\mu_{n}\xrightarrow{\phi_{n+1},\gamma_{n+1}} \mu_{n+1}\longrightarrow\cdots
	\end{align*} is called a {\bf Mac Lane-Vaqui\'e  chain}, if  $\mu_0$ is a depth zero valuation defined by $(\alpha_0,$ $\gamma_0),$ where $\phi_0=x-\alpha_0,$ and every augmentation step $\mu_{n}\rightarrow \mu_{n+1}$ is one of the following two types:
	\begin{enumerate}[(i)]
		\item Ordinary augmentation: $\mu_{n+1}=[\mu_n; \phi_{n+1},\gamma_{n+1}]$, such that $\deg (\mu_{n})<\deg(\Phi(\mu_{n},\mu_{n+1}))(=\deg(\mu_{n+1})).$
		\item  Limit augmentation: $\mu_{n+1}=[\mathcal{C};\phi_{n+1},\gamma_{n+1}]$, such that $\mathcal{C}$ is an essential continuous family of augmentations of $\mu_n$,  $\deg (\mu_{n})=\deg(\Phi(\mu_{n},\mu_{n+1}))<\deg(\mu_{n+1})$ and $\phi_{n}\notin\Phi(\mu_{n},\mu_{n+1}).$
	\end{enumerate}
	
\end{definition}
The following result categorizes MLV chains of extensions of $v$ to $K[x].$
\begin{theorem}(\cite[Theorem 4.3]{EN1})\label{mlvt1}
	Every valuation $\mu$ on $K[x]$ falls in one of the following cases. 
	\begin{enumerate}[(i)]
		\item It is the last valuation of a finite MLV chain
		\begin{align*}
			\mu_0\xrightarrow{\phi_1,\gamma_1} \mu_1\xrightarrow{\phi_2,\gamma_2}\cdots\longrightarrow \mu_{n}\xrightarrow{\phi_{n+1},\gamma_{n+1}} \mu_{n+1}\cdots\xrightarrow{\phi_r,\gamma_r} \mu_r=\mu.
		\end{align*}
		\item There exists a valuation $\mu_r$ falling in case (i) such that $\mu$  is the stable limit of a continuous family ${\mathcal{C}}$ of augmentations of $\mu_r$
		\begin{align*}
			\mu_0\xrightarrow{\phi_1,\gamma_1} \mu_1\xrightarrow{\phi_2,\gamma_2}\cdots\longrightarrow\mu_{n}\xrightarrow{\phi_{n+1},\gamma_{n+1}} \mu_{n+1}\cdots\xrightarrow{\phi_r,\gamma_r} \mu_r\xrightarrow {{\mathcal{C}}} \rho_{\mathcal{C}}=\mu,
		\end{align*}
		such that $\deg(\Phi(\mu_r,\mu))=\deg(\mu_r)$ and $\phi_r\notin \Phi(\mu_r, \mu).$
		\item It is the stable limit of an infinite MLV chain 
		\begin{align*}
			\mu_0\xrightarrow{\phi_1,\gamma_1} \mu_1\xrightarrow{\phi_2,\gamma_2}\cdots \longrightarrow \mu_{n}\xrightarrow{\phi_{n+1},\gamma_{n+1}} \mu_{n+1}\longrightarrow\cdots .
		\end{align*}
	\end{enumerate}
	We say that the MLV chain of  $\mu$ has finite depth $r,$ quasi-finite depth $r,$ or infinite depth, respectively.
\end{theorem}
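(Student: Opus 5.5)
The plan is to build the chain by transfinite recursion on the degree of the tangent direction, with the classification forced by how the recursion stops. Fix $\mu$ on $K[x]$. Start with a depth zero valuation $\mu_0=w_{\alpha_0,\gamma_0}<\mu$ (or $\mu_0=\mu$ if $\mu$ itself is depth zero). For $a\in K$, the values $\mu(x-a)$ either attain a maximum $\gamma_0$ at some $\alpha_0=a$, or they do not. If they do, put $\mu_0=[v;x-\alpha_0,\gamma_0]$. If they do not, the family $(w_{a,\mu(x-a)})_a$ is a continuous family of degree one. Its stable limit is then $\mu$ (case (ii) with $r=0$), or else $\mu$ is a limit augmentation of this family. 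That degree-one situation is handled exactly like the inductive step below.

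Inductive step: suppose $\mu_n\le\mu$ has been built with $\deg(\mu_n)=\deg\phi_n$. If $\mu_n=\mu$, stop; this gives case (i). Otherwise $\mu_n<\mu$, so $\Phi(\mu_n,\mu)\subset\operatorname{KP}(\mu_n)$ is nonempty by \cite[Theorem 1.15]{V}. Let $\mathbb{A}=\mu(\Phi(\mu_n,\mu))$ and split into two cases.

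\emph{Case $\deg\Phi(\mu_n,\mu)>\deg(\mu_n)$.} Pick $\phi_{n+1}\in\Phi(\mu_n,\mu)$ with $\mu(\phi_{n+1})$ maximal. Such a maximum exists: a non-maximal $\mathbb{A}$ would, by Proposition \ref{cf1}, give a continuous family of degree $\deg\Phi(\mu_n,\mu)$, and I expect a standard argument to contradict minimality. This point needs care; it may be cleaner to allow a limit here as well. Set $\mu_{n+1}=[\mu_n;\phi_{n+1},\mu(\phi_{n+1})]\le\mu$. This is an ordinary step of the required type.

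\emph{Case $\deg\Phi(\mu_n,\mu)=\deg(\mu_n)$.} If $\mathbb{A}$ has a maximum, then $[\mu_n;\chi,\max\mathbb{A}]$ has the same degree as $\mu_n$. In that situation I replace $\mu_n$ by it, equivalently I change $\phi_n$ and $\gamma_n$, which keeps the chain MLV since it only modifies the last step. After this replacement $\mathbb{A}$ has no maximum, and I take the family $\mathcal{A}$ of Proposition \ref{cf1}. Either $\mathcal{A}$ has stable limit $\mu$, which is case (ii) provided $\phi_n\notin\Phi(\mu_n,\mu)$, and this again follows from the normalization of $\phi_n$. Or $\mathcal{A}$ is essential: an in-essential family would have a limit key polynomial of degree $\deg\mu_n$, contradicting the absence of a maximum in $\mathbb{A}$. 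In the essential case take $\phi_{n+1}\in\operatorname{KP}_\infty(\mathcal{A})$ and $\mu_{n+1}=[\mathcal{A};\phi_{n+1},\mu(\phi_{n+1})]\le\mu$. This is a limit step, and $\deg\mu_{n+1}=m_\infty>\deg\mu_n$.

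In every case the degrees $\deg\mu_n$ strictly increase, so the recursion is countable. If it never stops, the chain $\mu_n$ is infinite. Any $f$ has $\deg f<\deg\mu_n$ for large $n$, and for such $n$ we get $\mu_n(f)=\mu(f)$ because $f$ lies below the tangent direction. Hence $\mu$ is the stable limit, which is case (iii).

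The main obstacle is the bookkeeping that makes each step genuinely MLV. One needs the inequality $\deg\Phi(\mu_n,\mu_{n+1})>\deg\mu_n$ for ordinary steps. For limit steps one needs the conditions $\phi_n\notin\Phi(\mu_n,\mu_{n+1})$ and that the family is essential. I would secure these by always normalizing $\gamma_n$ to be maximal among augmentations of the same degree below $\mu$, using Remark \ref{lmr} and \cite[Lemmas 3.6--3.8]{EN1}.
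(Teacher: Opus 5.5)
The paper does not prove this statement; it quotes it from \cite{EN1}. Your recursion is a sound strategy: set $\gamma_n=\mu(\phi_n)$ at each node, split according to whether $\deg\Phi(\mu_n,\mu)$ exceeds $\deg(\mu_n)$, and use Proposition \ref{cf1} in the equal-degree case. However, one step is false as written.

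In the case $\deg\Phi(\mu_n,\mu)>\deg(\mu_n)$, the set $\mathbb{A}$ need not have a maximum, and no minimality argument will produce one. Take $K=\mathbb{Q}$, $v=v_p$ with $p$ odd, and $\mu(f)=\hat v(f(t))$, where $t=\sqrt{p}\,s$ and $s\in\mathbb{Z}_p^\times$ is transcendental over $\mathbb{Q}$.
Then $\mu(x-a)=\min\{1/2,v(a)\}$, so $\mu_0=[v;x,1/2]$ and $\Phi(\mu_0,\mu)$ contains no linear polynomial. For $c\in\mathbb{Z}$ with $c\equiv s^2\pmod p$ we have $\mu_0(x^2-pc)=1<1+v(s^2-c)=\mu(x^2-pc)$. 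These values are unbounded as $c\to s^2$ $p$-adically, yet always finite because $t$ is transcendental. So $\deg\Phi(\mu_0,\mu)=2>1$ while $\mu(\Phi(\mu_0,\mu))$ has no maximum.

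Your fallback of allowing a limit here is not an MLV step either, because limit steps require $\deg\Phi(\mu_n,\mu_{n+1})=\deg(\mu_n)$. The fix is that no maximum is needed. Take any $\phi_{n+1}\in\Phi(\mu_n,\mu)$ and set $\gamma_{n+1}=\mu(\phi_{n+1})$; this ordinary step is MLV regardless. If $\mathbb{A}$ has no maximum, then $\Phi(\mu_{n+1},\mu)$ consists exactly of the $\chi\in\Phi(\mu_n,\mu)$ with $\mu(\chi)>\gamma_{n+1}$. So the next node lands in your equal-degree case with no maximum, which is exactly where Proposition \ref{cf1} applies.

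Three smaller points also need repair.

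\emph{Normalization.} It does not lead to ``$\mathbb{A}$ has no maximum''. If $\mu_n'=[\mu_n;\chi,\max\mathbb{A}]\neq\mu$, then $\deg\Phi(\mu_n',\mu)>\deg(\mu_n')$. Indeed, any $g$ of degree $\deg(\mu_n)$ with $\mu_n'(g)<\mu(g)$ lies in $\Phi(\mu_n,\mu)$. Hence $\mu(g)\le\mu(\chi)$, and then $\mu_n'(g)=\min\{\mu(\chi),\mu(g-\chi)\}\ge\mu(g)$. So the replaced node goes to the ordinary case. Also, $\phi_n\notin\Phi(\mu_n,\mu)$ is automatic from $\mu_n(\phi_n)=\gamma_n=\mu(\phi_n)$.

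\emph{Stable limit.} The claim ``$\mathcal{A}$ has stable limit $\mu$'' needs an argument, since a stable limit could a priori lie strictly below $\mu$. For $\alpha<\beta$ in $\mathbb{A}$ we have $\rho_\alpha<\rho_\beta<\mu$, so Corollary \ref{2.5}(ii) gives $\rho_\alpha(f)=\rho_\beta(f)\iff\rho_\alpha(f)=\mu(f)$. Thus every $\mathcal{A}$-stable $f$ has stable value $\mu(f)$. This yields $\rho_{\mathcal{A}}=\mu$ in the stable case, and $[\mathcal{A};\phi_{n+1},\mu(\phi_{n+1})]\le\mu$ in the essential case.

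\emph{In-essential case.} It is excluded via Remark \ref{r1}, not via the absence of a maximum. A monic $\phi$ of degree $\deg(\mu_n)$ with $\mu_n(\phi)<\mu(\phi)$ lies in $\Phi(\mu_n,\mu)$. Then $\rho_{\mu(\phi)}=[\mu_n;\phi,\mu(\phi)]$ already takes the value $\mu(\phi)$ at $\phi$, so $\phi$ is stable.

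With these repairs the construction works. When $\mu(x-a)$, $a\in K$, has no maximum, start from $\mu_0=[v;x-a_0,\mu(x-a_0)]$ for an arbitrary $a_0\in K$.
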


We call a valuation $\mu$ of {\bf Type 1}, {\bf Type 2}, or {\bf Type 3} respectively, according as $\mu$ falls under cases (i), (ii), or (iii) as above. 
	Note that any valuation $\mu$ is of Type 1  if and only if $\mu$ is either valuation-transcendental or has a nontrivial support. Otherwise, $\mu$ is valuation-algebraic (see \cite[Lemma 4.5]{EN1}).

 For any MLV chain of $\mu$, the {\bf relative gap} of an augmentation step $\mu_{n}\longrightarrow \mu_{n+1}$ is defined as the rational number
\[d_{n}:=\frac{\deg(\mu_{n+1})}{\deg(\Phi(\mu_{n},\mu_{n+1}))}= \begin{cases} 1 & \text{if} \ \mu_{n}\longrightarrow \mu_{n+1}\ \text{is ordinary }\\ \deg(\mu_{n+1})/\deg(\mu_{n})\ &  \text{if} \ \mu_{n}\longrightarrow \mu_{n+1} \ \text{is limit }\end{cases} .\] Clearly, $d_n\geq 1$ for every $n.$ In fact, $d_n>1$ if $\mu_{n}\longrightarrow \mu_{n+1}$ is limit augmentation. Moreover, the degrees $\deg (\mu_n),$ depth, relative gaps and the nature of the augmentation steps $\mu_{n}\longrightarrow \mu_{n+1}$ are independent of the choice of an MLV chain of $\mu$ (see \cite[Corollary 4.4, Theorem 4.7]{EN1}).

\subsection{Extensions of valuations}

Let ($\overline{K},\bar{v})$ be as before. If $K^{sep}$ is the separable closure of $K$ in $\overline{K}$, then the fixed field of the {\bf decomposition group}\[D_{\bar{v}}=\{\sigma\in \text{Gal}(K^{sep}|K)\mid\bar{v}\circ\sigma=\bar{v}\},\] denoted by $K^h,$ is called the {\bf Henselization} of $K.$ Let $v^h$ be the restriction of $\bar{v}$ to $K^h.$ Since $\overline{K}|K^{sep}$ is purely inseparable, for all $\sigma\in \text{Aut}(\overline{K}|K),$ we have \[\bar{v}\circ\sigma=\bar{v}\iff \sigma\in \text{Aut}(\overline{K}|K^h).\] Hence, the valuation $v^h$ has a unique extension to $\overline{K}.$

A valued field $(K,v)$ is called {\bf Henselian} if $v$ extends uniquely to every finite extension $L$ of $K,$ i.e., $K=K^h.$ Any finite extension $L$ of $K$ is called {\bf unibranched} if $v$ has a unique extension to $L.$

	\begin{remark}\label{r21}
	For any irreducible polynomial $g$ over $K,$ the extensions of $v$ to the simple field extension $K[x]/(g)$ are in one-to-one correspondence with the irreducible factors of $g$ in $K^h[x]$ (see \cite[Section 17]{O} and \cite[Section 3]{EN3}). More precisely, if $g=G_1G_2\cdots G_s$ is the factorization of $g$ into irreducibles over $K^h,$ then there are exactly $s$ extensions of $v$ to $K[x]/(g)$, given by ${v}_i=\bar{v}\circ \lambda_{\theta_i},$ where $ \lambda_{\theta_i}(x+gK[x])=\theta_i$, and $\theta_i\in Z(G_i)$\footnote{For any $f\in \overline{K}[x],$ $Z(f)$ is the set of all roots of $f$ in $\overline{K}.$}, for all $i,\ 1\leq i \leq s.$
	Moreover, there are  exactly $'s'$ valuations on $K[x]$ with nontrivial support $gK[x]$ induced by ${v}_i$, defined as 
	\[v_{G_i}:K[x]\twoheadrightarrow K[x]/(g)\xrightarrow{{v}_i} \Gamma_{\bar{v}}\infty.\]
\end{remark}

If $\mu$ is any extension of $v$ to $K[x],$ then there exists a unique common extension $\mu^h$ of $\mu$ and $v^h$ to $K^h[x]$ (see \cite[Theorem A]{EN5}). In particular, we have 

\begin{proposition}(\cite[Proposition 5.6]{EN3})\label{df1}
Let $\mu$ be a valuation-transcendental extension. Then for any $\phi\in\operatorname{KP}(\mu),$ there exist a unique irreducible factor $\phi^h$ of $\phi$ in $K^h[x]$ such that $\phi^h\in \operatorname{KP}(\mu^h)$.
Moreover, we have $\deg_\mu(\phi)=\deg_{\mu^h}(\phi^h).$ In particular, if $\phi$ has minimal degree in $\operatorname{KP}(\mu),$ then $\phi^h$ has minimal degree in $\operatorname{KP}(\mu^h).$
\end{proposition}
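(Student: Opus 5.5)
The plan is to pass from $K[x]$ to $K^h[x]$ through graded algebras, using the fact that $\mu^h$ is an immediate extension of $\mu$.

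\emph{Immediateness.} Since $K^h\subseteq K(x)^h$, where the latter is the Henselization of $(K(x),\mu)$, the field $K^h(x)$ lies inside that Henselization. The Henselization is an immediate extension, so $\mu^h$ is an immediate extension of $\mu$: we get $\Gamma_{\mu^h}=\Gamma_\mu$ and $k_{\mu^h}=k_\mu$. I would then prove the graded version of this. For every nonzero $F\in K^h[x]$ there should exist $f\in K[x]$ with $\deg f\le \deg F$ and $F\sim_{\mu^h} f$. I would obtain this by writing $\mu=w_{\alpha,\delta}$ with $(\alpha,\delta)$ a minimal pair. I would then approximate the coefficients of the $(x-\alpha)$-expansion, using the density of $K$ in $K^h$ in the sense of immediate extensions, together with Lemma \ref{il1} to control values. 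The outcome is that the natural map $\mathcal{G}_\mu\to\mathcal{G}_{\mu^h}$ of graded algebras is an isomorphism onto the elements of nonnegative $\deg_{\mu^h}$-degree. This is the step I expect to be the main obstacle: the degree bound $\deg f\le\deg F$ is what makes $\mu$-minimality transfer, and getting it requires controlling degrees as well as values.

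\emph{Choosing $\phi^h$.} Let $\phi=G_1\cdots G_s$ be the factorization of $\phi$ into monic irreducibles over $K^h$. Then $\mathrm{in}_{\mu^h}\phi=\prod_i \mathrm{in}_{\mu^h}G_i$. The key polynomials for $\mu$ are exactly the monic polynomials whose initial forms generate homogeneous prime ideals, subject to the minimality condition. Under the identification above, primeness of $\mathrm{in}_\mu\phi$ forces all but one of the factors $\mathrm{in}_{\mu^h}G_i$ to be units of the graded algebra, that is, to have $\deg_{\mu^h}(G_i)=0$. I define $\phi^h$ to be the remaining factor. Uniqueness is then immediate: a second factor that is also a key polynomial for $\mu^h$ would be a non-unit, contradicting the fact that there is only one non-unit factor.

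\emph{$\phi^h$ is a key polynomial for $\mu^h$.} $\mu^h$-irreducibility is inherited from primeness of $\mathrm{in}_\mu\phi$ via the graded identification. For $\mu^h$-minimality, suppose $\phi^h\mid_{\mu^h}F$ with $\deg F<\deg\phi^h$. Replace $F$ by the $f\in K[x]$ supplied by the first step, so $\deg f\le\deg F$. Multiplying by the unit factors $G_i$ with $i\ne$ the chosen index then gives $\phi\mid_\mu f g$ for suitable $g\in K[x]$ of controlled degree, which contradicts the $\mu$-minimality of $\phi$.

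\emph{The degree equality.} Take $\chi$ of minimal degree in $\operatorname{KP}(\mu)$. Then $\chi^h$ is of minimal degree in $\operatorname{KP}(\mu^h)$: this follows by comparing $\mu$-minimality with the degree-controlled approximation of the first step. For $\phi$, unit factors contribute $0$ to $\deg_{\mu^h}$, so $\deg_{\mu^h}(\phi)=\deg_{\mu^h}(\phi^h)$. It remains to show $\deg_\mu(\phi)=\deg_{\mu^h}(\phi)$. Since $\mathrm{in}$-forms correspond under the graded identification, the $\chi$-expansion and the $\chi^h$-expansion of $\phi$ have the same maximal index attaining the minimum value. This gives $\deg_\mu(\phi)=\deg_{\mu^h}(\phi^h)$, and in particular the minimal-degree statement.
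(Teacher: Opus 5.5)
The paper gives no argument of its own here; it imports the statement from Proposition 5.6 of \cite{EN3}. Your proposal has a genuine gap at its base. The degree-controlled approximation in your first step is false in general: it is not true that every nonzero $F\in K^h[x]$ is $\mu^h$-equivalent to some $f\in K[x]$ with $\deg f\le\deg F$.

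To see why, suppose $\deg f<\deg(\mu)$. Then $\mathrm{in}_\mu f$ is a unit of $\mathcal{G}_\mu$, so $fg\sim_\mu 1$ for some $g\in K[x]$. If moreover $F\sim_{\mu^h}f$, then $Fg\sim_{\mu^h}1$, so $F$ cannot be a key polynomial for $\mu^h$. Your claim would therefore force every key polynomial of $\mu^h$ to have degree at least $\deg(\mu)$. That gives $\deg(\mu^h)=\deg(\mu)$ and $\chi=\chi^h$ for every minimal-degree $\chi\in\operatorname{KP}(\mu)$, so every valuation-transcendental $\mu$ would be regular. That is exactly what fails in the situations this paper studies (Theorem \ref{lst}).

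Here is a concrete instance. Let $K=\mathbb{Q}(t)$, and let $v$ be the composite of the $t$-adic valuation with the $5$-adic valuation on the residue field $\mathbb{Q}$ (value group $\mathbb{Z}^2$, ordered lexicographically).
\begin{itemize}
\item Since $-1$ is a square in the $5$-adic Henselization of $\mathbb{Q}$, $x^2+1$ has a root $i\in K^h$.
\item For all $b\in K$ we have $\bar v(i-b)<(1,0)$, because $\sqrt{-1}\notin\mathbb{Q}$. So $(i,(1,0))$ is a minimal pair.
\item Hence $\mu=w_{i,(1,0)}$ has $x^2+1$ as a minimal-degree key polynomial, while $\mu^h$ has $x-i$ as a key polynomial.
\item Equivalence $x-i\sim_{\mu^h}ax+b$ with $a,b\in K$ would need $v(1-a)>0$ and $\bar v(ai+b)>(1,0)$. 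This is impossible, since then $v(a)=0$ and $\bar v(ai+b)=\bar v(i+b/a)<(1,0)$.
\item Yet $x-i\sim_{\mu^h}-\tfrac12(x^3+x)$, so it is precisely the degree bound that fails.
\end{itemize}
The mechanism you sketch cannot deliver the bound either. Approximating the coefficients of an $(x-\alpha)$-expansion by elements of $K$ yields an element of $K[x-\alpha]$, which lies in $K[x]$ only when $\alpha\in K$, i.e.\ in depth zero.

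Every later step rests on this. You need the identification $\mathcal{G}_\mu\cong\mathcal{G}_{\mu^h}$ to pass from primeness of $\mathrm{in}_\mu\phi$ in $\mathcal{G}_\mu$ to primeness of $\mathrm{in}_{\mu^h}\phi$ in $\mathcal{G}_{\mu^h}$; primeness does not ascend from a subring automatically. Your $\mu^h$-minimality argument for $\phi^h$ uses $\deg f\le\deg F$ explicitly. Your argument that $\chi^h$ has minimal degree in $\operatorname{KP}(\mu^h)$, via the same approximation, would in fact prove $\deg\chi^h=\deg\chi$. A correct proof has to accommodate $\deg\phi^h<\deg\phi$ and $\deg(\mu^h)<\deg(\mu)$. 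A degree-free surjectivity of $\mathcal{G}_\mu\to\mathcal{G}_{\mu^h}$ may still be attainable, as the example suggests. Even so, the $\mu^h$-minimality of $\phi^h$ and the minimal-degree statement would then need a different mechanism. One option is to work with minimal pairs and optimizing roots over $K^h$, rather than transporting low-degree polynomials from $K^h[x]$ back to $K[x]$.
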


The first part of the following lemma is proved as Proposition 3.7 in \cite{NA}, and the second part follows directly from Remark \ref{r21}.
\begin{lemma}\label{l1}
	Let $\mu$ be a valuation-transcendental extension. For any $\phi , \psi\in \operatorname{KP}(\mu)$ and $\alpha\in Z(\phi)$, we have
	\begin{enumerate}[(i)]
		\item $\phi=\phi^h \iff \psi=\psi^h,$
		\item $\phi=\phi^h \iff K(\alpha)|K $ is unibranched.
	\end{enumerate} 
\end{lemma}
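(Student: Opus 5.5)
Part (ii) is a direct consequence of Remark \ref{r21}, so I would dispose of it first and spend the effort on (i). For (ii): a key polynomial $\phi$ for $\mu$ is irreducible in $K[x]$, and it is the minimal polynomial of any $\alpha\in Z(\phi)$, so $K(\alpha)\cong K[x]/(\phi)$. By Remark \ref{r21}, the extensions of $v$ to $K[x]/(\phi)$ correspond bijectively to the monic irreducible factors of $\phi$ in $K^h[x]$. Thus $K(\alpha)|K$ is unibranched if and only if $\phi$ is irreducible over $K^h$. Since $\phi^h$ is a monic irreducible factor of $\phi$ in $K^h[x]$, this happens if and only if $\phi=\phi^h$.

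For (i), I would show that the ratio $\deg\psi/\deg\psi^h$ is the same for every $\psi\in\operatorname{KP}(\mu)$. Fix $\phi_0\in\operatorname{KP}(\mu)$ of minimal degree, so $\deg(\mu)=\deg\phi_0$. By Proposition \ref{df1}, $\phi_0^h$ has minimal degree in $\operatorname{KP}(\mu^h)$, hence $\deg(\mu^h)=\deg\phi_0^h$.

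The key intermediate claim is that every key polynomial $\psi$ of a valuation-transcendental $\mu$ satisfies
\[
\deg\psi=\deg_\mu(\psi)\,\deg(\mu).
\]
If $\deg\psi=\deg\phi_0$, then the $\phi_0$-expansion is $\psi=\phi_0+(\psi-\phi_0)$ with $\deg(\psi-\phi_0)<\deg\phi_0$. Since $\psi$ is $\mu$-minimal and $\phi_0\mid_\mu\phi_0$ trivially, the maximal index in $S_{\mu,\phi_0}(\psi)$ is $1$, i.e.\ $\deg_\mu(\psi)=1$. If $\deg\psi>\deg\phi_0$, then $\mu$-minimality of $\psi$ forces the leading term of its $\phi_0$-expansion, namely $\phi_0^m$ with $m=\deg\psi/\deg\phi_0$, to attain the minimum $\mu(\psi)$. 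Hence $\deg_\mu(\psi)=m$. This is the standard characterization of $\mu$-minimality: $\psi$ is $\mu$-minimal if and only if $\deg_\mu(\psi)\deg\phi_0=\deg\psi$. This is the step I expect to need the most care; I would quote it from Nart's theory of key polynomials rather than reprove it.

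The same claim applied to $\mu^h$ over $K^h$ gives $\deg\psi^h=\deg_{\mu^h}(\psi^h)\deg(\mu^h)$. Proposition \ref{df1} gives $\deg_{\mu^h}(\psi^h)=\deg_\mu(\psi)$. Dividing the two identities, I obtain
\[
\frac{\deg\psi}{\deg\psi^h}=\frac{\deg(\mu)}{\deg(\mu^h)}=\frac{\deg\phi_0}{\deg\phi_0^h},
\]
which does not depend on $\psi$. Since $\psi^h$ is a monic factor of $\psi$, we have $\psi=\psi^h$ if and only if this ratio equals $1$. The ratio is the same for $\phi$ and $\psi$, so $\phi=\phi^h\iff\psi=\psi^h$.
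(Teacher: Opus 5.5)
Your proof is correct and follows essentially the same route as the paper. The paper reads (ii) off Remark \ref{r21}, as you do. For (i) it defers to Proposition 3.7 of \cite{NA}, whose argument (reused in Remark \ref{rem2}) is exactly the invariance of $\deg\psi/\deg\psi^h$ over $\operatorname{KP}(\mu)$; you obtain this from $\deg\psi=\deg_\mu(\psi)\deg(\mu)$ together with $\deg_{\mu^h}(\psi^h)=\deg_\mu(\psi)$ from Proposition \ref{df1}.

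One small slip: in the equal-degree case, the relevant reason is not that $\phi_0\mid_\mu\phi_0$. It is that otherwise $\psi\sim_\mu\psi-\phi_0$, a polynomial of smaller degree, which contradicts the $\mu$-minimality of $\psi$. The standard characterization you quote covers this case anyway.
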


\begin{lemma}(\cite[Lemma 5.7]{EN3})\label{df2}
Let $\mu$ be a valuation-transcendental extension and $\phi\in \operatorname{KP}(\mu).$ If $\nu=[\mu;\phi,\gamma]$ is an ordinary augmentation, for some $\gamma\in\Gamma\infty$ such that $\gamma>\mu(\phi),$ then \[\nu^h=[\mu^h;
\phi^h,\gamma^h], \ \text{where}\ \gamma^h=\gamma-\mu^h(\phi/\phi^h).\]
\end{lemma}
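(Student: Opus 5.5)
The plan is to avoid computing $\nu^h$ directly. Instead I will construct the candidate valuation $\eta:=[\mu^h;\phi^h,\gamma^h]$ on $K^h[x]$ and show that it is a common extension of $\nu$ and $v^h$. The uniqueness of such a common extension (\cite[Theorem A]{EN5}) then forces $\eta=\nu^h$. Throughout write $\phi=\phi^h g$ with $g=\phi/\phi^h\in K^h[x]$, where $\phi^h$ is the factor given by Proposition \ref{df1}.

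\emph{Step 1 ($\mu^h\leq\nu^h$).} Choose a common extension $\overline{\mu}=\overline{w}_{\alpha,\delta}$ with $(\alpha,\delta)$ a minimal pair. Since $\mu<\nu$, I expect to choose $\overline{\nu}=\overline{w}_{\alpha',\delta'}$ with $\delta'\geq\delta$ and $\bar v(\alpha-\alpha')\geq\delta$, where $\alpha'$ can be taken to be a root of $\phi^h$. The idea is to apply Lemma \ref{il1} to replace $\alpha$ by a root of $\phi^h$, which is legitimate because $\phi^h\in\operatorname{KP}(\mu^h)$ is the factor attached to $\overline{\mu}$. Then (\ref{eb}) gives $\overline{\mu}\leq\overline{\nu}$. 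Restricting to $K^h[x]$ and using uniqueness of the canonical extensions yields $\mu^h\leq\nu^h$.

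\emph{Step 2 (the cofactor is stable).} I want to show $\mu^h(g)=\nu^h(g)$. Every root $\beta$ of $g$ is a $K^h$-conjugate of $\alpha'$ lying outside $Z(\phi^h)$. Because $\phi^h$ is the unique factor that is a key polynomial for $\mu^h$, such a $\beta$ satisfies $\bar v(\alpha'-\beta)<\delta$. Hence each linear factor $x-\beta$ has the same value $\bar v(\alpha'-\beta)$ under $\overline{w}_{\alpha',\delta}$ and $\overline{w}_{\alpha',\delta'}$, and taking the product gives $\mu^h(g)=\nu^h(g)$. This is the step I expect to be the main obstacle. The inequality $\bar v(\alpha'-\beta)<\delta$ for conjugates outside $Z(\phi^h)$ has to be extracted from the construction of $\phi^h$ in Proposition \ref{df1}, possibly via Krasner-type arguments. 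If that route fails, I would instead argue graded-algebraically that $\phi^h\nmid_{\mu^h} g$, using that $g$ is a product of non-key irreducible factors. From $\phi^h\nmid_{\mu^h} g$ one gets $\mu^h(g)=\nu^h(g)$, since $\Phi(\mu^h,\nu^h)=[\phi^h]_{\mu^h}$.

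\emph{Step 3 (identify the augmentation).} From Step 2, $\nu^h(\phi^h)=\gamma-\mu^h(g)=\gamma^h$. Since $\gamma=\nu(\phi)>\mu(\phi)$, we also get $\gamma^h>\mu^h(\phi^h)$, so $\eta$ is a well-defined ordinary augmentation. We have $\eta\leq\nu^h$, because $\nu^h$ dominates $\mu^h$ on coefficients of degree less than $\deg\phi^h$ and agrees with $\gamma^h$ on $\phi^h$. Every $f\in K[x]$ has a $\phi$-expansion whose coefficients lie in $K[x]$ and have degree less than $\deg\phi$, so $\mu(f_i)=\nu(f_i)$ for them. Comparing $\eta(f)$ with $\nu(f)=\min_i\{\mu(f_i)+i\gamma\}$, using $\eta(\phi)=\eta(\phi^h)+\eta(g)=\gamma^h+\mu^h(g)=\gamma$, gives $\eta(f)\geq\nu(f)$. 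Combined with $\eta\leq\nu^h$ this yields $\eta|_{K[x]}=\nu$. Uniqueness then gives $\eta=\nu^h$.
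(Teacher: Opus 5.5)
The paper does not prove this lemma; it quotes it from \cite[Lemma 5.7]{EN3}, so your argument has to stand on its own. Your overall plan is right, and Step 3 is correct once $\mu^h\le\nu^h$ and $\mu^h(g)=\nu^h(g)$ are available. The equality $\eta(g)=\mu^h(g)$, which you use tacitly, follows from $\mu^h\le\eta\le\nu^h$. But there is a genuine gap: Steps 1 and 2, which carry all the content, are not actually proved.

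\textbf{Step 1.} You only \emph{expect} comparable common extensions, and the recentering runs the wrong way. Lemma \ref{il1} lets you move the center of $\overline{\mu}=\overline{w}_{\alpha,\delta}$ within distance $\delta$. It cannot put the center of $\overline{\nu}$ on a root of $\phi^h$, since that would need $\bar v(\alpha-\alpha')\ge\delta'$. Moreover, a priori the factor $G$ of $\phi$ attached to $\nu$ (via Proposition \ref{df1}, or the support if $\gamma=\infty$) could differ from $\phi^h$.

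One way to close this uses $\mu^h\le\nu^h$, i.e.\ order-preservation of $\rho\mapsto\rho^h$, which the paper itself draws from \cite[Theorem A]{EN5} when it asserts that $\mathcal{C}^h$ is totally ordered.
\begin{itemize}
\item Suppose $G\neq\phi^h$. Pick $\theta_0\in Z(\phi^h)$ and $\beta_0\in Z(G)$ with $m=\bar v(\theta_0-\beta_0)$ maximal over $Z(\phi^h)\times Z(G)$.
\item Compute $\mu^h$ and $\nu^h$ with $\overline{w}_{\theta_0,\delta}$ and $\overline{w}_{\beta_0,\delta'}$. This is legitimate by Theorem \ref{t2} after conjugating by $\operatorname{Aut}(\overline{K}|K^h)$.
\item Theorem \ref{t21} gives $m<\delta$ and $m<\delta'$.
\item Compare the two computations of the value of $\phi^h$ term by term over $\theta'\in Z(\phi^h)$. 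This yields $\nu^h(\phi^h)<\mu^h(\phi^h)$, with strict inequality at $\theta'=\theta_0$, which is absurd.
\item Hence $G=\phi^h$, and both extensions can be centered at a single $\alpha'\in Z(\phi^h)$. The same test on $\phi^h$ then forces $\delta\le\delta'$.
\end{itemize}

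\textbf{Step 2.} The inequality you flag as the main obstacle needs no Krasner argument; it is exactly Theorem \ref{t21}. A root $\beta$ of $g$ is a $K$-conjugate of $\alpha'$ outside $Z(\phi^h)$, not a $K^h$-conjugate as you wrote. It is therefore not an optimizing root, so $\min\{\bar v(\alpha'-\beta),\delta\}=\overline{\mu}(x-\beta)<\delta$. Hence $\bar v(\alpha'-\beta)<\delta\le\delta'$ and $\overline{\nu}(x-\beta)=\overline{\mu}(x-\beta)$. The mere uniqueness of the key-polynomial factor in Proposition \ref{df1} does not give this.

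Your fallback is circular. Passing from $\phi^h\nmid_{\mu^h}g$ to $\mu^h(g)=\nu^h(g)$ presupposes $\Phi(\mu^h,\nu^h)=[\phi^h]_{\mu^h}$, which is essentially what the lemma asserts. Without it, $g$ could be $\mu^h$-divisible by whatever polynomial actually spans the tangent direction.
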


Let $\mathcal{C}=(\rho_i)_{i\in\mathbf{A}}$ be a continuous family of valuations on $K[x]$, and let $\mathcal{C}^h=(\rho_i^h)_{i\in\mathbf{A}}$ be the corresponding family of canonical extensions to $K^h[x]$. By \cite[Theorem A]{EN5}, $\mathcal{C}^h$ is a totally ordered family of valuations on $K^h[x]$ without a maximal element, parametrized by $\mathbf{A}$. Since the degree of valuations admitting key polynomials is an order-preserving function (see \cite[Lemma 2.2]{Ma2}), we have $\deg(\rho_i^h)\leq \deg(\rho_j^h)$ for all $i<j\in\mathbf{A}$.  By Proposition \ref{df1}, $\deg(\rho_i^h)\leq\deg(\rho_i)\leq \deg(\mathcal{C})$, for all $i\in \mathbf{A}$. Therefore, $\mathcal{C}^h$ has a stable degree and is a continuous family of valuations on $K^h[x]$.

\begin{lemma}(\cite[Lemma 5.13]{EN3})\label{df3}
Let $\mathcal{C}  =(\rho_i)_{i\in\mathbf{A}}$ be a continuous family of valuations on $K[x]$ and let $\mathcal{C}^h$ be its canonical extensions to $K^h[x].$ Let $\phi$ be a limit key polynomial for $\mathcal{C}$ and consider the limit augmentation $\nu=[\mathcal{C};\phi,\gamma],$ for some $\gamma\in\Gamma\infty$ such that $\gamma>\rho_i(\phi)$ for all $i\in\mathbf{A}.$ Then, $\phi^h$ is a limit key polynomial for $\mathcal{C}^h$, $\phi/\phi^h$ is $\mathcal{C}^h$-stable and 
\[\nu^h=[\mathcal{C}^h;\phi^h,\gamma^h], \ \text{where}\ \gamma^h=\gamma-(\rho_{\mathcal{C}})^h(\phi/\phi^h).\]
\end{lemma}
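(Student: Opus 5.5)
The plan is to reduce everything to the uniqueness of the canonical extension \cite[Theorem A]{EN5}. That theorem says $\nu^h$ is the \emph{only} common extension of $\nu$ and $v^h$ to $K^h[x]$. So it suffices to show three things: $\phi^h$ is a limit key polynomial for $\mathcal{C}^h$; $\phi/\phi^h$ is $\mathcal{C}^h$-stable; and the limit augmentation $[\mathcal{C}^h;\phi^h,\gamma^h]$ extends both $\nu$ and $v^h$. In particular, $\phi^h$ must first be defined intrinsically.

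\textbf{Step 1: locating $\phi^h$.} Since $\deg\phi=m_\infty$, every polynomial of degree $<\deg\phi$ is $\mathcal{C}$-stable. Hence for all sufficiently large $i$ such a polynomial $f$ satisfies $\rho_i(f)=\rho_{\mathcal{C}}(f)=\nu(f)$, by the definition of the limit augmentation. On the other hand $\rho_i(\phi)<\gamma=\nu(\phi)$. Thus $\phi\in\Phi(\rho_i,\nu)\subset\operatorname{KP}(\rho_i)$ for all $i\geq i_0$. Proposition \ref{df1} then gives a unique irreducible factor $\phi^h_i$ of $\phi$ over $K^h$ with $\phi^h_i\in\operatorname{KP}(\rho_i^h)$.

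I would check that $\phi^h_i$ does not depend on $i$. Following the proof of Lemma \ref{df2}, $\phi^h_i$ should be the factor $G$ of $\phi$ over $K^h$ with $\rho_i^h(G)<\rho_j^h(G)$ for $j>i$. The other factors $G'$ should satisfy $\rho_i^h(G')=\rho_j^h(G')$. This is what one gets by applying Lemma \ref{df2} to $\rho_j=[\rho_i;\phi,\rho_j(\phi)]$ for $i<j$, which is legitimate because $\phi\in\Phi(\rho_i,\rho_j)$.

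Consequently $\phi^h:=\phi^h_i$ is $\mathcal{C}^h$-unstable. The cofactor $\phi/\phi^h$ has constant $\rho_i^h$-value for $i\geq i_0$, so it is $\mathcal{C}^h$-stable, with stable value $(\rho_{\mathcal{C}})^h(\phi/\phi^h)$.

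\textbf{Step 2: minimality of $\deg\phi^h$.} This is the step I expect to be the main obstacle, since $K$ need not be dense in $K^h$ and no approximation argument is available. I would argue through tangent directions. Let $g\in K^h[x]$ be $\mathcal{C}^h$-unstable, and suppose the augmentation $\tilde\nu=[\mathcal{C}^h;\phi^h,\gamma^h]$ is already known to be a valuation with $\rho_i^h<\tilde\nu$ (this is provisional until Step 3). Then $\rho_i^h(g)<\rho_j^h(g)\leq\tilde\nu(g)$, so $\deg g\geq\deg\Phi(\rho_i^h,\tilde\nu)$.

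Now $\phi^h\in\operatorname{KP}(\rho_i^h)$ and $\rho_i^h(\phi^h)<\tilde\nu(\phi^h)$. Any polynomial $h$ of degree $<\deg\phi^h$ has $\tilde\nu(h)=\rho_{\mathcal{C}^h}(h)$, provided such $h$ are $\mathcal{C}^h$-stable. To avoid circularity, I would instead compare with $\deg(\Phi(\rho_i,\nu))$ through Proposition \ref{df1}: the tangent direction on $K^h[x]$ is represented by the factor $\phi^h$ of a representative $\phi$ of $\Phi(\rho_i,\nu)$. This gives $\deg\Phi(\rho_i^h,\rho_j^h)=\deg\phi^h$, and therefore $m_\infty(\mathcal{C}^h)=\deg\phi^h$.

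\textbf{Step 3: identifying $\nu^h$.} With $\phi^h\in\operatorname{KP}_\infty(\mathcal{C}^h)$ established, set $\gamma^h=\gamma-(\rho_{\mathcal{C}})^h(\phi/\phi^h)$. Since $\gamma>\rho_i(\phi)$ for all $i$, we get $\gamma^h>\rho_i^h(\phi^h)$, so $\tilde\nu=[\mathcal{C}^h;\phi^h,\gamma^h]$ is a limit augmentation.

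It extends $v^h$ because every $\rho_i^h$ does. To see that it extends $\nu$, evaluate on $\phi$:
\[
\tilde\nu(\phi)=\gamma^h+(\rho_{\mathcal{C}})^h(\phi/\phi^h)=\gamma=\nu(\phi),
\]
using stability of the cofactor. For stable $f\in K[x]$ of degree $<\deg\phi$ we have $\tilde\nu(f)=\rho_{\mathcal{C}^h}(f)=\rho_{\mathcal{C}}(f)=\nu(f)$.

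For a general $f\in K[x]$, I would expand it $\phi$-adically. Since $\tilde\nu$ and $\nu$ agree on the coefficients and on $\phi$, the valuation inequality gives $\tilde\nu(f)\geq\nu(f)$. For equality, I would argue as in the proof of Lemma \ref{df2}, using that $\phi^h$ is the key polynomial of $\tilde\nu$ of minimal degree, together with the graded-algebra compatibility between $\tilde\nu$ and $\nu$. Uniqueness in \cite[Theorem A]{EN5} then yields $\tilde\nu=\nu^h$.
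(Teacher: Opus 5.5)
The paper does not prove this lemma (it is quoted from \cite{EN3}), so I assess your argument on its own terms. It has a genuine gap in Step~1.

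From the fact that each polynomial of degree $<\deg\phi$ is $\mathcal{C}$-stable, you pass to the statement that, for one fixed $i\geq i_0$, $\rho_i$ and $\nu$ agree on \emph{all} polynomials of degree $<\deg\phi$. But the index from which a stable polynomial stabilizes depends on the polynomial, so this is a quantifier swap. The conclusion $\phi\in\Phi(\rho_i,\nu)$ is false whenever $\mathcal{C}$ is essential, i.e.\ $\deg(\mathcal{C})<m_\infty$, which is precisely the case in which this paper applies the lemma.

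Take an essential family normalized as in Remark~\ref{cfr}. Each $\chi_j$ is $\mathcal{C}$-stable, yet for $i<j$ one has $\rho_i(\chi_j)=\beta_i<\beta_j=\nu(\chi_j)$ with $\deg\chi_j=\deg(\mathcal{C})<\deg\phi$. By Corollary~\ref{2.5}(ii), $\Phi(\rho_i,\nu)=\Phi(\rho_i,\rho_j)=[\chi_j]_{\rho_i}$. In fact $\phi$ is not even in $\operatorname{KP}(\rho_i)$: $\rho_i(\phi)<\rho_j(\phi)$ means $\chi_j\mid_{\rho_i}\phi$, which is impossible for a $\rho_i$-irreducible, $\rho_i$-minimal polynomial of degree larger than $\deg\chi_j$. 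Likewise $\rho_j=[\rho_i;\chi_j,\beta_j]$, not $[\rho_i;\phi,\rho_j(\phi)]$.

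So Proposition~\ref{df1} and Lemma~\ref{df2} cannot be applied to $\phi$ at the level of the $\rho_i$. Your definition of $\phi^h$, the claim that exactly one $K^h$-factor of $\phi$ is $\mathcal{C}^h$-unstable, and the stability of $\phi/\phi^h$ are therefore all unproved. Your argument works only in the in-essential case, where $\deg(\rho_i)=\deg\phi$ for large $i$ and $\phi\in\Phi(\rho_i,\nu)$ holds for that reason.

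Step~2 inherits the failure. Applying Lemma~\ref{df2} to the correct description $\rho_j=[\rho_i;\chi_j,\beta_j]$ gives $\Phi(\rho_i^h,\rho_j^h)=[\chi_j^h]_{\rho_i^h}$, so your reasoning yields only $m_\infty(\mathcal{C}^h)\geq\deg\chi_j^h$. The asserted equality $\deg\Phi(\rho_i^h,\rho_j^h)=\deg\phi^h$ is false whenever $\chi_j$ and $\phi$ are irreducible over $K^h$ (the regular situation studied in this paper), since there it would read $\deg\chi_j=\deg\phi$. Hence the heart of the lemma is never established: that no polynomial of $K^h[x]$ of degree $<\deg\phi^h$ is $\mathcal{C}^h$-unstable. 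Your provisional assumption that $[\mathcal{C}^h;\phi^h,\gamma^h]$ is a valuation already presupposes it.

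Step~3 has an independent gap. Agreement on $\phi$ and on polynomials of lower degree gives only $\tilde\nu|_{K[x]}\geq\nu$. Equality needs the $\phi$-expansion to be $\tilde\nu$-optimal, and ``graded-algebra compatibility'' does not supply that.

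A workable route has three parts:
\begin{enumerate}[(1)]
\item Define $\phi^h$ through the minimal limit augmentation $\nu_{\mathcal{C}}$, for which $\phi$ genuinely is a key polynomial of minimal degree (Lemma~\ref{vt1}).
\item Prove by a real argument that $\phi^h\in\operatorname{KP}_\infty(\mathcal{C}^h)$ and that $\phi/\phi^h$ is $\mathcal{C}^h$-stable, for instance by identifying $(\nu_{\mathcal{C}})^h$ with the minimal limit augmentation of $\mathcal{C}^h$.
\item Argue downward from $\nu^h$ instead of upward from $\tilde\nu$. Since $\rho_i^h<\nu^h$ for all $i$, Corollary~\ref{2.5}(ii) gives $\nu^h(g)=\rho_{\mathcal{C}^h}(g)$ for every $\mathcal{C}^h$-stable $g$. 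Expanding in $\phi^h$ (a minimal-degree key polynomial of $\nu^h$, or a generator of its support) then yields $\nu^h=[\mathcal{C}^h;\phi^h,\gamma^h]$ directly, with no reverse inequality to prove.
\end{enumerate}
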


In the next lemma, we consider the case when the continuous family $\mathcal{C}$ does not admit a limit key polynomial.
\begin{lemma}\label{ml1}
Let $\mathcal{C}  =(\rho_i)_{i\in\mathbf{A}}$ be a continuous family of valuations on $K[x]$ and let $\mathcal{C}^h$ be its canonical extensions to $K^h[x].$ If $\mathcal{C}$ has a stable limit $\rho_{\mathcal{C}}$, then the family $\mathcal{C}^h$ also has a stable limit $\rho_{\mathcal{C}^h}$ and  $(\rho_{\mathcal{C}})^h=\rho_{\mathcal{C}^h}.$
\end{lemma}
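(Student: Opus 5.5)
The plan is to reduce everything to the stabilization property of the restriction map $\rho\mapsto\rho^h$ on individual polynomials. The key input is the relation between $\rho_i$ and $\rho_i^h$ on elements of $K[x]$: by \cite[Theorem A]{EN5}, $\rho_i^h$ restricted to $K[x]$ equals $\rho_i$. I would then show that every $F\in K^h[x]$ is $\mathcal{C}^h$-stable, and that the stable values define the canonical extension of $\rho_{\mathcal{C}}$.

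\emph{Step 1: all of $K^h[x]$ is $\mathcal{C}^h$-stable.} Suppose, to the contrary, that $\mathcal{C}^h$ admits an unstable polynomial, and pick a monic unstable $F\in K^h[x]$ of minimal degree. Since products of stable polynomials are stable, $F$ may be taken irreducible over $K^h$. Let $g\in K[x]$ be the minimal polynomial over $K$ of a root $\alpha$ of $F$; then $F$ divides $g$ in $K^h[x]$.

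I expect the main obstacle to be showing that $g/F$ is $\mathcal{C}^h$-stable. Granting this, $g$ would be unstable, since $\rho_i^h(g)=\rho_i^h(F)+\rho_i^h(g/F)$, with the first term strictly increasing and the second eventually constant. But $\rho_i^h(g)=\rho_i(g)$ is eventually constant, a contradiction.

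To handle the obstacle, I would argue through Remark \ref{r21} and the decomposition-group description. The other irreducible factors $G_j$ of $g$ over $K^h$ have roots $\beta$ that are not $D_{\bar v}$-conjugate to $\alpha$. One then wants $\bar\rho_i(x-\beta)$, for a common extension $\bar\rho_i$ to $\overline{K}[x]$, to be bounded, hence eventually constant in $i$. The reason is that $\bar\rho_i(x-\alpha)$ grows, while $\bar v(\alpha-\beta)$ is bounded, and
\[\bar\rho_i(x-\beta)=\min\{\bar\rho_i(x-\alpha),\bar v(\alpha-\beta)\}\]
once $\bar\rho_i(x-\alpha)\neq\bar v(\alpha-\beta)$. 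To control all conjugates simultaneously, I would choose the common extensions $\bar\rho_i$ coherently, e.g.\ increasing in $i$, which is possible via \cite{EN5}. Here some care is needed, because the unstable growth must happen at $\alpha$ and not at a $K^h$-conjugate of $\beta$; minimality of $\deg F$ and irreducibility of the $G_j$ should handle this. An alternative route for Step 1 is to write $F$ as a factor of its norm $N(F)\in K[x]$, with the same argument. Either way, once stability of the cofactor is established, Step 1 follows.

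\emph{Step 2: the stable limit is the canonical extension.} Define $\rho_{\mathcal{C}^h}(F)$ as the stable value. By \cite[Proposition 3.1]{EN1}, applied over $K^h$, this is a valuation on $K^h[x]$. It restricts to $v^h$ on $K^h$, since each $\rho_i^h$ does. On $f\in K[x]$ it takes the value $\rho_i^h(f)=\rho_i(f)=\rho_{\mathcal{C}}(f)$ for large $i$. Hence $\rho_{\mathcal{C}^h}$ is a common extension of $\rho_{\mathcal{C}}$ and $v^h$, and by the uniqueness in \cite[Theorem A]{EN5} it equals $(\rho_{\mathcal{C}})^h$.
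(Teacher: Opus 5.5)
Your outline is the paper's: argue by contradiction with an irreducible $\mathcal{C}^h$-unstable $F$, pass to the minimal polynomial $g\in K[x]$ of a root, and in Step 2 identify the stable limit with $(\rho_{\mathcal{C}})^h$ by uniqueness of the canonical extension. Step 2 is fine. Step 1, however, has a gap exactly where you flag it: you never prove that $g/F$ is $\mathcal{C}^h$-stable, and the conjugate-root sketch does not prove it.

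The inference ``$\bar\rho_i(x-\beta)$ bounded, hence eventually constant'' is invalid. A strictly increasing family of values can perfectly well be bounded; this is the typical behaviour of unstable polynomials along a continuous family. In particular, nothing in your argument rules out $\bar\rho_i(x-\alpha)<\bar v(\alpha-\beta)$ for all $i$. In that case $\bar\rho_i(x-\beta)=\bar\rho_i(x-\alpha)$ keeps increasing, so $G_j$ is unstable too. The appeal to minimality of $\deg F$ and irreducibility of the $G_j$ does not address this case. So your subgoal is not just unproved; it is the wrong target.

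The missing idea is that you only need $\rho_i^h(g/F)$ to be nondecreasing in $i$, not eventually constant, and that comes for free. You are already using that $i\mapsto\rho_i^h$ is increasing: that is what makes $\rho_i^h(F)$ strictly increasing, via Corollary \ref{2.5}(ii) over $K^h$. This monotonicity comes from \cite[Theorem A]{EN5}, as recalled before Lemma \ref{df3}. The same fact gives $\rho_i^h(g/F)\le\rho_j^h(g/F)$ for $i<j$, hence
\[\rho_i(g)=\rho_i^h(F)+\rho_i^h(g/F)<\rho_j^h(F)+\rho_j^h(g/F)=\rho_j(g).\]
So $g$ is $\mathcal{C}$-unstable, contradicting the existence of the stable limit. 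This is exactly the paper's argument. With it, the minimal-degree choice of $F$, the coherent common extensions $\bar\rho_i$ and the norm alternative all become unnecessary, and your Step 2 goes through as in the paper.
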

\begin{proof}
Assume, to the contrary, that $\mathcal{C}^h$ does not have a stable limit, and let $\phi\in K^h[x]$ be $\mathcal{C}^h$-unstable. As the product of $\mathcal{C}^h$-stable polynomials is also $\mathcal{C}^h$-stable, we can assume that $\phi$ is monic and irreducible over $K^h.$ Also $\phi\nin K[x]$ because all polynomials in $K[x]$ are $\mathcal{C}$-stable. 

Let $\alpha\in Z(\phi)$ and let $f$ be the minimal polynomial of $\alpha$ over $K.$ Then $\phi\mid f$ in $K^h[x],$ say, $f=\phi g$ for some $g\in K^h[x]$.  For every $i<j\in\mathbf{A},$ we have $\rho_i^h(\phi)<\rho_j^h(\phi),$ which implies that $\rho_i^h(f)<\rho_j^h(f)$. But $f\in K[x],$ so \[\rho_i(f)=\rho_i^h(f)<\rho_j^h(f)=\rho_j(f)\] for every $i<j\in\mathbf{A},$ contradicting the fact that $\mathcal{C}$ has a stable limit. Hence, $\mathcal{C}^h$ has a stable limit $\rho_{\mathcal{C}^h}$.

Since each $f\in K[x]$ is $\mathcal{C}^h$-stable, there exists $i_0\in\mathbf{A}$ such that \[\rho_i^h(f)=\rho_{i_0}^h(f)=\rho_{\mathcal{C}^h}(f)\] for every $i\geq i_0$ in $\mathbf{A}.$ But $f\in K[x]$ implies that $\rho_i(f)=\rho_{i_0}(f)=\rho_{\mathcal{C}^h}(f)$ for every $i\geq i_0$ in $\mathbf{A}.$ Since $f$ is also $\mathcal{C}$-stable,  $\rho_{\mathcal{C}^h}(f)=\rho_{\mathcal{C}}(f),$ i.e., $\rho_{\mathcal{C}^h}\mid_{K[x]}=\rho_{\mathcal{C}}.$ As $\rho_\mathcal{C}$ has a unique extension to $K^h[x]$, we have $(\rho_{\mathcal{C}})^h=\rho_{\mathcal{C}^h}.$ 
\end{proof}

\begin{remark}\label{rem2}
	Let $\mathcal{C}=(\rho_i)_{i\in\mathbf{A}}$ be a continuous family of augmentations of $\mu$ on $K[x]$, with $\rho_i=[\mu; \chi_i,\beta_i]$. By Lemma \ref{df2}, \[\rho_i^h=[\mu^h;\chi_i^h,\beta_i^h],\ \text{where} \  \beta_i^h=\beta_i-\mu^h(\chi_i/\chi_i^h).\] Then $\mathcal{C}^h=(\rho_i^h)_{i\in\mathbf{A}}$ is also a continuous family of augmentations of $\mu^h$ on $K^h[x]$. 
	
	Moreover, if $\deg(\mu)=\deg(\mathcal{C})$ then $\deg(\mu^h)=\deg(\mathcal{C}^h).$ Indeed, for any minimal degree key polynomial $\phi$ for $\mu$, since each $\chi_i\in\operatorname{KP}(\mu),$ arguing as in the proof of Proposition 3.7 of \cite{NA}, we have  \[\frac{\deg\phi}{\deg\phi^h}=\frac{\deg\chi_i}{\deg\chi_i^h}.\] Now $\deg(\mu)=\deg(\mathcal{C})$ implies  $\deg\phi=\deg\chi_i$, and then from the above equation, we have $\deg\phi^h=\deg\chi_i^h,$ for every $i\in\mathbf{A}.$  By Proposition \ref{df1}, $\deg(\mu^h)=\deg\phi^h$, hence,  $\deg(\mu^h)=\deg(\mathcal{C}^h).$
\end{remark}

	Let $\overline{\mu}$ be any common extension of $\mu$ and $\bar{v}$ to $\overline{K}[x].$ For any polynomial $f$ in $K[x],$ we define the {\bf optimizing value} of $f$ as
$$\delta(f):=\max\{\overline{\mu}(x-\alpha)\mid \alpha\in Z(f)\}.$$ A root $\alpha$ of $f$ is said to be an {\bf optimizing root} of $f$ if $\overline{\mu}(x-\alpha)=\delta(f).$
This optimizing value of $f$ is independent of the choice of the common extension $\overline{\mu}.$ In particular, we have

  \begin{theorem}(\cite[Corollary 3.13]{EJP})\label{t21}
 	Let $\mu$ be a valuation-transcendental extension of $v$ to $K[x],$ and $\phi\in \operatorname{KP}(\mu).$ Then the set of optimizing roots of $\phi$ is equal to $Z(\phi^h).$
 \end{theorem}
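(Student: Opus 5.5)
The plan is to compare the roots of $\phi$ lying in $Z(\phi^h)$ with those lying in the other irreducible factors of $\phi$ over $K^h$, using two facts: invariance of $\overline{\mu}$ under $\operatorname{Aut}(\overline{K}|K^h)$, and the fact (Proposition \ref{df1}) that $\phi^h$ is the only factor of $\phi$ in $K^h[x]$ that is a key polynomial for $\mu^h$. The optimizing value $\delta(\phi)$ does not depend on the choice of $\overline{\mu}$, so we fix one common extension $\overline{\mu}$ of $\mu$ and $\bar v$.

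\emph{Step 1 (the roots of $\phi^h$ all have the same value).} Take $\sigma\in\operatorname{Aut}(\overline{K}|K^h)$ and let it act on coefficients. Then $\overline{\mu}\circ\sigma$ is again a valuation on $\overline{K}[x]$ extending $\mu^h$. It also extends $\bar v\circ\sigma=\bar v$, since $v^h$ has a unique extension to $\overline{K}$. So $\overline{\mu}\circ\sigma$ is another common extension of $\mu^h$ and $\bar v$. I expect uniqueness of this common extension to follow from \cite[Theorem A]{EN5} applied over the Henselian field $(K^h,v^h)$; this gives $\overline{\mu}\circ\sigma=\overline{\mu}$. Since $\sigma$ fixes $x$, we get $\overline{\mu}(x-\sigma\alpha)=\overline{\mu}(x-\alpha)$, and $\operatorname{Aut}(\overline{K}|K^h)$ acts transitively on $Z(\phi^h)$. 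Hence all roots of $\phi^h$ share a common value, say $\delta^h$.

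\emph{Step 2 (roots of the other factors have strictly smaller value).} Write $\phi=\phi^hG_2\cdots G_s$ over $K^h$; each $G_j$ is irreducible and, by Step 1, has a constant value $\delta_j$ on its roots. It remains to show $\delta_j<\delta^h$ for all $j$. By Proposition \ref{df1}, $\phi^h\in\operatorname{KP}(\mu^h)$ and no $G_j$ is a key polynomial for $\mu^h$, i.e.\ each $G_j$ is a $\mu^h$-unit. I would work with the valuations $\overline{w}_{\beta,\epsilon}$. Choose $\alpha\in Z(\phi^h)$ and set $\epsilon=\overline{\mu}(x-\alpha)=\delta^h$. I expect $\overline{\mu}$ and $\overline{w}_{\alpha,\epsilon}$ to agree on $\phi$ and $\phi^h$, and to give $\overline{w}_{\alpha,\epsilon}(x-\beta)=\min\{\epsilon,\bar v(\alpha-\beta)\}$. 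Suppose instead that some root $\beta$ of $G_j$ satisfies $\overline{\mu}(x-\beta)\ge\delta^h$. Then $\bar v(\alpha-\beta)\ge\delta^h$, so by Lemma \ref{il1} the pair $(\beta,\delta^h)$ also defines the same restriction on $K^h[x]$, and $G_j$ attains its value through a factor $x-\beta$ with maximal slope $\delta^h$. This means $G_j$ is $\mu^h$-divisible by $\phi^h$; equivalently, the initial form of $G_j$ in the graded algebra of $\mu^h$ involves the prime element $\operatorname{in}(\phi^h)$ and is not a unit. That contradicts $G_j$ being a $\mu^h$-unit.

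Combining the two steps, the maximum $\delta(\phi)$ is attained exactly on $Z(\phi^h)$, which gives the claimed equality of sets. The main obstacle is Step 2: it requires a precise characterization of $\mu^h$-units over the Henselian field. The characterization I would aim for is that an irreducible $G\in K^h[x]$ is a $\mu^h$-unit if and only if $\overline{\mu}(x-\beta)<\delta(\phi^h)$ for its roots $\beta$. I would first try to prove this through the $\phi^h$-expansion of $G$, showing $S_{\mu^h,\phi^h}(G)=\{0\}$, in the spirit of the degree statement $\deg_\mu(\phi)=\deg_{\mu^h}(\phi^h)$ in Proposition \ref{df1}.
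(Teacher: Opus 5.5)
Your Step 1 rests on a false claim, and its conclusion is false as well. Theorem A of \cite{EN5} gives uniqueness of the common extension of $\mu$ and $v^h$ on $K^h[x]$. Applied over $(K^h,v^h)$ it is vacuous and says nothing about $\overline K[x]$. There the common extensions of $\mu^h$ and $\bar v$ are in general not unique: they form an $\operatorname{Aut}(\overline K|K^h)$-orbit, and $\overline w_{\alpha,\delta}\circ\sigma^{-1}=\overline w_{\sigma\alpha,\delta}\neq\overline w_{\alpha,\delta}$ as soon as $\bar v(\alpha-\sigma\alpha)<\delta$.

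Concretely, let $K=\mathbb{Q}_p$ with $p$ odd (so $K^h=K$), let $\overline\mu=\overline w_{\sqrt p,1}$, and let $\mu$ be its restriction to $K[x]$. Since $\bar v(\sqrt p-b)\le 1/2$ for every $b\in K$, the pair $(\sqrt p,1)$ is minimal, so $\phi=x^2-p=\phi^h\in\operatorname{KP}(\mu)$. Yet $\overline\mu(x-\sqrt p)=1$ while $\overline\mu(x+\sqrt p)=\min\{1,\bar v(2\sqrt p)\}=1/2$.

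So with a fixed $\overline\mu$, as in the paper's definition, the optimizing roots can be a proper subset of $Z(\phi^h)$. The inclusion $Z(\phi^h)\subseteq\{\text{optimizing roots}\}$ that Step 1 is meant to supply cannot be proved. The statement can only be true, and is only used (e.g.\ in Proposition~\ref{pc}), in the orbit sense: $Z(\phi^h)$ is the set of roots of $\phi$ that are optimizing for \emph{some} common extension. Equivalently, it is the set of $\alpha\in Z(\phi)$ with $\overline w_{\alpha,\delta(\phi)}|_{K[x]}=\mu$. Each $\sigma\in\operatorname{Aut}(\overline K|K^h)$ sends common extensions to common extensions via $\overline\mu\mapsto\overline\mu\circ\sigma^{-1}$, and $(\overline\mu\circ\sigma^{-1})(x-\sigma\alpha)=\overline\mu(x-\alpha)$. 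Hence this set is nonempty and $\operatorname{Aut}(\overline K|K^h)$-stable, and $Z(\phi^h)$ is a single orbit. This stability should replace Step 1, and everything reduces to one inclusion: for each fixed $\overline\mu$, every optimizing root of $\phi$ lies in $Z(\phi^h)$.

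Step 2 must supply that inclusion, but it has to be restarted. An arbitrary $\alpha\in Z(\phi^h)$ need not be optimizing, and then $\overline w_{\alpha,\overline\mu(x-\alpha)}$ does not agree with $\overline\mu$ on $\phi$. In the example, $\overline w_{-\sqrt p,1/2}(x^2-p)=1\neq 3/2=\overline\mu(x^2-p)$. Start instead from an optimizing root $\alpha$ of $\phi$. You then need $\overline\mu=\overline w_{\alpha,\delta(\phi)}$, that is, $\delta(\phi)$ must equal the radius $\epsilon$ of $\overline\mu=\overline w_{a,\epsilon}$ for every key polynomial, not only those of minimal degree. This is a genuine input (cf.\ Theorem~\ref{t2}).

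Granting it, you do not need a root-theoretic characterization of $\mu^h$-units. Since $\overline\mu=\overline w_{\alpha,\delta(\phi)}$ with $\delta(\phi)<\infty$, there is no $g$ with $(x-\alpha)g\sim_{\overline\mu}1$. On the other hand, $fg\sim_{\mu^h}1$ implies $fg\sim_{\overline\mu}1$. Hence the $K^h$-irreducible factor of $\phi$ vanishing at $\alpha$ is not a $\mu^h$-unit.

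To conclude that this factor is $\phi^h$, you need that $\phi/\phi^h$ is a $\mu^h$-unit, and you must import this separately. Your inference ``no $G_j$ is a key polynomial for $\mu^h$, i.e.\ each $G_j$ is a $\mu^h$-unit'' is a non sequitur, because a polynomial that is not a key polynomial may still be $\mu^h$-divisible by one.
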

 
 In 2026, Nart  gave an explicit description of common extensions in terms of key polynomials of $\mu.$
 \begin{theorem}(\cite[Remark 3.14]{EJP})\label{t2}
 	Let $\mu$ be valuation-transcendental, and $\phi\in\operatorname{KP}(\mu).$ Then for any common extension $\overline{\mu}$ of $\mu$ and $\bar{v}$ to $\overline{K}[x]$, we have $\overline{\mu}=\overline{w}_{\alpha,\delta},$ where $\alpha\in Z(\phi^h)$ and $\delta=\delta(\phi).$ Moreover, $\delta=\delta(\phi)$ is independent of the choice of the key polynomial.
\end{theorem}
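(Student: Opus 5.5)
The plan is to start from an arbitrary pair of definition for $\overline{\mu}$ and show that it can be moved onto a root of $\phi^h$ without changing $\delta$. Since $\mu$ is valuation-transcendental, \cite[Corollary 3.7]{W} gives $\overline{\mu}=\overline{w}_{\beta,\delta'}$ for some $(\beta,\delta')\in\overline{K}\times\Gamma$. Two facts about this presentation will be used throughout:
\[\overline{\mu}(x-\alpha)=\min\{\bar v(\beta-\alpha),\delta'\}\le\delta' \quad\text{for every } \alpha\in\overline{K},\]
and, writing $Z(\phi)=\{\alpha_1,\dots,\alpha_m\}$ with multiplicity,
\[\overline{\mu}(\phi)=\sum_i\min\{\bar v(\beta-\alpha_i),\delta'\}.\]
By Lemma \ref{il1}, $(\alpha,\delta')$ is again a pair of definition exactly when $\bar v(\beta-\alpha)\ge\delta'$, that is, exactly when $\overline{\mu}(x-\alpha)=\delta'$. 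So it suffices to show that some root $\alpha_i$ of $\phi$ satisfies $\overline{\mu}(x-\alpha_i)=\delta'$.

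To find such a root I would use that $\phi$ is a key polynomial, so $\mu$ can be augmented along it. Take an ordinary augmentation $\nu=[\mu;\phi,\gamma]$ with $\mu(\phi)<\gamma<\infty$; then $\nu$ is again valuation-transcendental. Choose a common extension $\overline{\nu}\ge\overline{\mu}$ on $\overline{K}[x]$, and write $\overline{\nu}=\overline{w}_{\beta',\delta''}$. By (\ref{eb}) we get $\delta'\le\delta''$ and $\bar v(\beta-\beta')\ge\delta'$, so by Lemma \ref{il1} we may replace $\beta$ by $\beta'$ in the pair for $\overline{\mu}$. Now compare
\[\overline{\mu}(\phi)=\sum_i\min\{\bar v(\beta'-\alpha_i),\delta'\}<\overline{\nu}(\phi)=\sum_i\min\{\bar v(\beta'-\alpha_i),\delta''\}.\]
The strict inequality forces some $i$ with $\bar v(\beta'-\alpha_i)>\delta'$. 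Hence $(\alpha_i,\delta')$ is a pair of definition and $\overline{\mu}(x-\alpha_i)=\delta'$.

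It follows that $\delta(\phi)=\max_\alpha\overline{\mu}(x-\alpha)=\delta'$, so $\alpha_i$ is an optimizing root, and by Theorem \ref{t21} it lies in $Z(\phi^h)$. Every other $\alpha\in Z(\phi^h)$ is also optimizing by Theorem \ref{t21}, so $\overline{\mu}(x-\alpha)=\delta'$ and, by the first step, $(\alpha,\delta')$ is a pair of definition as well. For independence, Lemma \ref{il1} shows that $\delta'$ depends only on $\overline{\mu}$, not on $\phi$. It does not depend on the choice of $\overline{\mu}$ either, since any two common extensions differ by an automorphism of $\overline{K}$ over $K$, and such an automorphism does not change $\bar v(\alpha-\beta)$-type quantities computed over $\overline{K}$.

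The main obstacle is the existence of a common extension $\overline{\nu}$ of $\nu$ and $\bar v$ with $\overline{\nu}\ge\overline{\mu}$. My first attempt would be to build $\overline{\nu}$ directly as $\overline{w}_{\alpha,\delta''}$ for a suitable root $\alpha$ of $\phi$, using the formula for $\nu$ via $\phi$-expansions. Failing that, I would avoid $\nu$ altogether and argue by contradiction. If $\bar v(\beta-\alpha_i)<\delta'$ for all $i$, then in the $(x-\beta)$-expansion of $\phi$ the constant term strictly dominates, so $\phi\sim_{\overline{\mu}}\phi(\beta)$. Taking a norm down to $K$, one would then obtain that $\phi$ is $\mu$-equivalent to a polynomial of degree $<\deg\phi$, which contradicts the $\mu$-minimality of $\phi$.
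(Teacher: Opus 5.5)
The paper does not prove this statement; it imports it from \cite[Remark 3.14]{EJP}, so your argument has to stand on its own. Your reduction is sound. You write $\overline{\mu}=\overline{w}_{\beta,\delta'}$ and use $\overline{\mu}(x-\alpha)=\min\{\bar v(\beta-\alpha),\delta'\}$, so by Lemma \ref{il1} it suffices to find one root of $\phi$ with $\overline{\mu}(x-\alpha)=\delta'$; after that, Theorem \ref{t21} and Lemma \ref{il1} finish the argument.

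\textbf{The gap.} Everything depends on the existence of a common extension $\overline{\nu}\geq\overline{\mu}$ of $\nu=[\mu;\phi,\gamma]$ and $\bar v$. You flag this as the main obstacle but never establish it. Nothing in the preliminaries lifts the order $\mu<\nu$ from $K[x]$ to $\overline{K}[x]$ above a prescribed $\overline{\mu}$. Proving it needs the same information about where the roots of $\phi$ lie relative to $\beta$ that the statement asserts.

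Your first fallback is circular for exactly this reason. To have $\overline{w}_{\alpha,\delta''}\geq\overline{\mu}$ you need $\bar v(\alpha-\beta)\geq\delta'$ for the chosen root $\alpha$ of $\phi$, which is the claim itself.

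Your second fallback aims at the right contradiction, $\phi\sim_{\overline{\mu}}\phi(\beta)$, but the proposed mechanism does not deliver it. A norm of $\phi(\beta)$ lands in $K$, and elements of $\mathrm{Aut}(\overline{K}|K)$ do not preserve $\overline{\mu}$. Moreover, for an arbitrary pair of definition $(\beta,\delta')$, a low-degree polynomial over $K$ need not be $\overline{\mu}$-equivalent to its value at $\beta$.

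\textbf{The missing idea: normalize the pair first.} By Lemma \ref{il1} you may take $(\beta,\delta')$ to be a $(K,v)$-minimal pair. Let $g$ be the minimal polynomial of $\beta$ over $K$. Then $g$ is a key polynomial of minimal degree for $\mu$, so $\deg g\leq\deg\phi$.
\begin{enumerate}[(i)]
\item If $0\neq f\in K[x]$ has $\deg f<\deg g$, then every root $\eta$ of $f$ has $\deg\eta<\deg\beta$. Minimality gives $\bar v(\beta-\eta)<\delta'$, hence $x-\eta\sim_{\overline{\mu}}\beta-\eta$, and so $f\sim_{\overline{\mu}}f(\beta)$.
\item Suppose every root of $\phi$ had $\bar v(\beta-\alpha_i)<\delta'$. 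Then $\phi\sim_{\overline{\mu}}\phi(\beta)\neq 0$.
\item Let $a_0$ be the remainder of $\phi$ modulo $g$. Then $a_0(\beta)=\phi(\beta)$ and $\deg a_0<\deg g$, so by (i) $a_0\sim_{\overline{\mu}}\phi(\beta)$.
\item Hence $\mu(\phi-a_0)=\overline{\mu}(\phi-a_0)>\mu(\phi)$, that is, $\phi\sim_\mu a_0$ with $\deg a_0<\deg\phi$. This contradicts the $\mu$-minimality of $\phi$.
\end{enumerate}
This gives the required root directly and makes the augmentation $\nu$ unnecessary. The rest of your proof then goes through unchanged.
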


 	\subsection{Defect}
 
 Let $(K,v)$ be a valued field, $L|K$ a finite extension and $v_L$ an extension of $v$ to $L.$ The {\bf Henselian-defect} of the extension $v_L|v$ is defined as \[d(v_L|v)=\frac{[L^h:K^h]}{ef},\] where $e$ and $f$ denote respectively the ramification index and inertia degree of $v_L|v,$  and $L^h, K^h$ are the Henselizations of $L$ and $K$  with respect to the extension of $v_L$ to $\overline{K}.$  If $L|K$ is unibranched, then $v_L$ is the unique extension of $v$ to $L.$ In this case, we denote $d(v_L|v)$ as $d(L|K).$ 
 
 A finite extension $L|K$ is said to be {\bf defectless} if $d(v_L|v)=1$ for every extension $v_L$ of $v$ to $L.$ A valued field $(K,v)$ is said to be defectless if every finite extension of $K$ is defectless.

 \begin{lemma}(\cite{V0} and \cite[Lemma 6.1]{EN3})\label{def}
 	Let $\mu\longrightarrow \nu$ be an augmentation and $\Phi(\mu,\nu)$ be the corresponding tangent direction, equipped with the pre-ordering determined by the action of $\nu.$ For $Q\in\Phi(\mu,\nu) ,$ set $\rho_Q=[\mu;Q, \nu(Q)].$ Let $\phi \in K[x]$ be either a key polynomial of minimal degree for $\nu,$ or $supp(\nu)=\phi K[x].$ Then the positive integer \[d=\min\{\deg_{\rho_Q}(\phi) \mid Q\in \Phi(\mu,\nu)\}\] is independent of the choice of $\phi.$ Moreover, the set of all $Q\in \Phi(\mu,\nu)$ such that $\deg_{\rho_Q}(\phi)=d$ is cofinal in $\Phi(\mu,\nu).$
 \end{lemma}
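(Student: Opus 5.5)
The plan is to first establish a monotonicity property of $Q\mapsto \deg_{\rho_Q}(\phi)$ along the pre-ordered set $\Phi(\mu,\nu)$. From this, cofinality follows at once. Independence of $\phi$ then reduces to comparing two minimal-degree key polynomials of $\nu$ for $Q$ large enough.

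\emph{Step 1 (structure of $\rho_Q$).} For $Q\in\Phi(\mu,\nu)$ we have $\mu<\rho_Q\le\nu$. Moreover $Q$ is a key polynomial of minimal degree for $\rho_Q$ by \cite[Proposition 2.1]{EN1}, and all elements of $\Phi(\mu,\nu)$ share the degree $\deg(\Phi(\mu,\nu))$. Hence $\deg_{\rho_Q}(\phi)=\max S_{\rho_Q,Q}(\phi)$, which is computed from the $Q$-expansion of $\phi$. If $\nu(Q)\le\nu(Q')$, then $\rho_Q\le\rho_{Q'}$. This follows from the uniqueness of ordinary augmentations \cite[Lemma 2.8]{EN1}, together with $\mu(Q-Q')\ge\nu(Q)$, using Remark \ref{r1}.

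\emph{Step 2 (monotonicity).} I would show that $\rho_Q<\rho_{Q'}$ implies $\deg_{\rho_{Q'}}(\phi)\le\deg_{\rho_Q}(\phi)$, the two valuations having minimal key polynomials of the same degree. The route is the standard fact that $\deg_\rho(f)$ equals the order of the initial form of $f$ with respect to the prime initial form of a minimal key polynomial in the graded algebra. The argument runs as follows. Write $\phi=\sum f_iQ^i$. Replacing $Q$ by $Q'=Q+a$ with $\deg a<\deg Q$ and $\rho_Q(a)\ge\rho_Q(Q)$ can only lower the top index attaining the minimum once values increase, because terms $f_iQ'^i$ with large $i$ gain value $i(\nu(Q')-\nu(Q))$ faster. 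Granting this, the minimum $d$ of a nonincreasing $\mathbb{N}$-valued function on a totally pre-ordered set is attained at some $Q_0$ and persists for all $Q\ge Q_0$. This gives the cofinality statement.

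\emph{Step 3 (independence of $\phi$).} Let $\phi,\phi'$ both be minimal-degree key polynomials of $\nu$, or generators of its support. Then $\phi'=\phi+a$ with $\deg a<\deg\phi=\deg(\nu)$ and $\nu(a)\ge\nu(\phi)$. Since $\deg a<\deg(\nu)$, in the ordinary case $\mu(a)=\nu(a)$, and in the limit case $a$ is stable (as $\deg a<m_\infty$). So for cofinally many $Q$ we have $\rho_Q(a)=\nu(a)$. On the other hand, $\phi$ is $\nu$-unstable with respect to $\rho_Q$, i.e.\ $\rho_Q(\phi)<\nu(\phi)$, because $\rho_Q<\nu$ and $\deg(\Phi(\rho_Q,\nu))$ is at most $\deg\phi$. (The trivial case where $\rho_Q=\nu$ must be handled directly.) Hence $\rho_Q(\phi'-\phi)=\nu(a)\ge\nu(\phi)>\rho_Q(\phi)$, so $\phi\sim_{\rho_Q}\phi'$. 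Since $\deg_{\rho_Q}$ is invariant under $\rho_Q$-equivalence, the two functions agree on a cofinal set. By Step 2 both attain their minima on cofinal sets, so the two minima coincide.

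I expect the main obstacle to be Step 2, the monotonicity of $\deg_{\rho_Q}(\phi)$ in $Q$. Changing $Q$ alters the whole expansion, not just the values. I would first try to prove it via the graded-algebra interpretation of $\deg_\rho$ as the exponent of the prime element $\mathrm{in}_{\rho}Q$ dividing $\mathrm{in}_\rho\phi$. Failing that, I would compare the $Q$- and $Q'$-expansions directly, using $\rho_{Q'}=[\rho_Q;Q',\nu(Q')]$.
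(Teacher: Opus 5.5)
The paper gives no proof of this lemma; it quotes it from Vaqui\'e and Nart--Novacoski. So I judge your argument on its own. Your architecture is sound, and with three repairs the proof goes through: $\rho_Q$ depends only on $\nu(Q)$ and increases with it; $Q\mapsto\deg_{\rho_Q}(\phi)$ is non-increasing, hence constant on a final segment; and two admissible choices $\phi,\phi'$ become $\rho_Q$-equivalent on a final segment.

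\textbf{Step~2 is not yet a proof.} This step carries the whole lemma. The ``standard fact'' you invoke is false. For a minimal-degree key polynomial $Q$ of $\rho$, the order of $\mathrm{in}_\rho f$ at the prime $\mathrm{in}_\rho Q$ is $\min S_{\rho,Q}(f)$. By contrast, $\deg_\rho(f)=\max S_{\rho,Q}(f)$ is the degree of $\mathrm{in}_\rho f$ in $\mathrm{in}_\rho Q$. For the Gauss valuation with $Q=x$ and $f=x+1$, the order is $0$ but $\deg_\rho(f)=1$. Your heuristic also pairs the $Q$-expansion coefficients $f_i$ with powers of $Q'$, so it computes nothing.

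The missing observation is that no change of expansion is needed. Suppose $\nu(Q)<\nu(Q')$.
\begin{itemize}
\item Then $\mu(Q'-Q)=\nu(Q'-Q)=\nu(Q)$, so $\rho_Q(Q')=\nu(Q)<\nu(Q')$.
\item Hence $Q'\in\Phi(\rho_Q,\nu)$ is itself a key polynomial of minimal degree for $\rho_Q$, and $\deg_{\rho_Q}(\phi)=\max S_{\rho_Q,Q'}(\phi)$.
\item Take the single expansion $\phi=\sum_j\phi_jQ'^j$. Then $\rho_Q(\phi_jQ'^j)=\mu(\phi_j)+j\nu(Q)$ and $\rho_{Q'}(\phi_jQ'^j)=\mu(\phi_j)+j\nu(Q')$.
\item Let $M=\max S_{\rho_Q,Q'}(\phi)$ and $j>M$. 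Add $j(\nu(Q')-\nu(Q))>M(\nu(Q')-\nu(Q))$ to the strict inequality $\mu(\phi_j)+j\nu(Q)>\mu(\phi_M)+M\nu(Q)$. This shows $j\notin S_{\rho_{Q'},Q'}(\phi)$.
\end{itemize}
Therefore $\deg_{\rho_{Q'}}(\phi)\le\deg_{\rho_Q}(\phi)$. This is your fallback, made precise.

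\textbf{The reason you give in Step~3 for $\rho_Q(\phi)<\nu(\phi)$ does not follow.} Knowing $\deg\Phi(\rho_Q,\nu)\le\deg\phi$ says nothing about $\phi$ itself. The same $Q'$-expansion, with $\nu(Q')>\nu(Q)$, repairs it.
\begin{itemize}
\item Suppose $0\in S_{\rho_Q,Q'}(\phi)$. For every $j\ge1$ we get $\nu(\phi_jQ'^j)=\mu(\phi_j)+j\nu(Q')>\rho_Q(\phi)=\mu(\phi_0)=\nu(\phi_0)$.
\item So $\phi\sim_\nu\phi_0$ with $\deg\phi_0<\deg\phi$. This contradicts the $\nu$-minimality of $\phi$, or $\nu(\phi)=\infty$ in the support case.
\item Hence $\min S_{\rho_Q,Q'}(\phi)\ge1$. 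Every term of the expansion then has $\nu$-value $>\rho_Q(\phi)$, which gives $\rho_Q(\phi)<\nu(\phi)$.
\item It also gives $\deg_{\rho_Q}(\phi)\ge1$. This is the positivity of $d$ asserted in the statement, which you never addressed.
\end{itemize}

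\textbf{The stability of $a$ needs to be transferred.} In the limit case, ``$a$ is stable'' refers to the family $\mathcal{C}$, not to the family $(\rho_Q)_Q$. To conclude $\rho_Q(a)=\nu(a)$ on a final segment of $\Phi(\mu,\nu)$, you need that $\mathcal{C}$ sits cofinally inside $(\rho_Q)_Q$ (Remark~\ref{lmr}).
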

 This stable value $d$ is called the {\bf defect} of the augmentation and is denoted by $d(\mu\longrightarrow \nu).$ 
 
 \begin{lemma}(\cite[Lemma 6.3]{EN3})\label{nl1}
 	If $\mu\longrightarrow \nu$ is an ordinary augmentation, then $d(\mu\longrightarrow \nu)=1.$
 \end{lemma}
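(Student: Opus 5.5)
The plan is to prove Lemma \ref{nl1} directly from the definition of the defect in Lemma \ref{def}, using that an ordinary augmentation is generated by a single key polynomial. Let $\nu=[\mu;\phi,\gamma]$ be an ordinary augmentation. As recalled after the definition of ordinary augmentation, either $\phi\in\operatorname{KP}(\nu)$ has minimal degree (when $\gamma<\infty$) or $\text{supp}(\nu)=\phi K[x]$ (when $\gamma=\infty$). In both cases $\phi$ is an admissible choice in Lemma \ref{def}. Since Lemma \ref{def} says $d$ does not depend on this choice, we may compute the defect with this particular $\phi$.

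The first step is to locate $\phi$ in the tangent direction. Because $\mu(\phi)<\gamma=\nu(\phi)$ and $\deg\phi=\deg(\Phi(\mu,\nu))$, we have $\phi\in\Phi(\mu,\nu)$. Corollary \ref{2.5}(i) then gives $\Phi(\mu,\nu)=[\phi]_\mu$. The pre-ordering on $\Phi(\mu,\nu)$ comes from $\nu$. For any $Q\in\Phi(\mu,\nu)$ we write $Q=\phi+a$ with $\deg a<\deg\phi$, so $\nu(a)=\mu(a)>\mu(\phi)$ and $\nu(Q)=\min\{\gamma,\mu(a)\}\le\gamma=\nu(\phi)$. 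Hence $\phi$ is a maximal element of $\Phi(\mu,\nu)$.

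The second step evaluates the minimum at $Q=\phi$. Here $\rho_\phi=[\mu;\phi,\gamma]=\nu$. If $\gamma<\infty$, then $\phi$ is a minimal degree key polynomial of $\nu$, and its $\phi$-expansion is $\phi$ itself, so $S_{\nu,\phi}(\phi)=\{1\}$ and $\deg_{\rho_\phi}(\phi)=1$. This gives $d\le 1$. The quantity $d$ is a positive integer by Lemma \ref{def}, so $d=1$.

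The case $\gamma=\infty$ is the delicate point, since $\rho_\phi=\nu$ then has nontrivial support and $\deg_{\rho_\phi}$ is not defined through key polynomials. To get around this, I would use the cofinality statement of Lemma \ref{def}: the set of $Q$ achieving $d$ is cofinal, and because $\phi$ is maximal in $\Phi(\mu,\nu)$ it must contain some $Q$ with $\nu(Q)=\infty$. Then $\nu(Q-\phi)=\mu(Q-\phi)=\infty$, so $Q=\phi$ and we are back to evaluating $\deg_{\rho_\phi}(\phi)$. In this case I would read $\deg_{\rho_\phi}(\phi)$ with the convention implicit in \cite{EN3}, namely as $1$ since $\phi$ generates the support. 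Alternatively, one can take $Q\sim_\mu\phi$ with $\nu(Q)$ large but finite and check by the same expansion argument that $\deg_{\rho_Q}(\phi)=1$: writing $\phi=Q-a$, we get $\rho_Q(\phi)=\min\{\nu(Q),\mu(a)\}$, and the term of index $1$ attains this minimum when $\nu(Q)\le\mu(a)$.
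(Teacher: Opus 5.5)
Your proof is correct and is the standard argument behind \cite[Lemma 6.3]{EN3}, which the paper quotes without proof: $\phi$ is a maximal element of $\Phi(\mu,\nu)=[\phi]_\mu$ and $\rho_\phi=\nu$, so $d\le\deg_\nu(\phi)=1$, and positivity of $d$ gives $d=1$. For $\gamma=\infty$, keep your first route. It is justified because $\nu(f)=\nu(f_0)=\min_i\nu(f_i\phi^i)$ still holds when $\text{supp}(\nu)=\phi K[x]$, so $S_{\nu,\phi}(\phi)=\{0,1\}$ and $\deg_\nu(\phi)=1$. Drop the ``alternative'', however, because a $Q\in[\phi]_\mu$ with $\nu(Q)<\infty$ need not exist: this happens whenever $\mu(\phi)>\mu(a)$ for all nonzero $a$ with $\deg a<\deg\phi$ (e.g.\ $v$ trivial, $\mu=w_{0,1}$, $\nu=[\mu;x,\infty]$, where $\Phi(\mu,\nu)=\{x\}$).
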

 \begin{lemma}(\cite[Lemma 6.4]{EN3})\label{nl2}
 	Suppose $(K,v)$ is Henselian. For all augmentations $\mu\longrightarrow \nu$ we have
 	\[d(\mu\longrightarrow \nu)=\deg(\nu)/ \deg (\Phi(\mu,\nu)).\]
 \end{lemma}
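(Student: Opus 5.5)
The plan is to split according to the nature of the augmentation and, in the limit case, to reduce the defect to a count of roots lying in a disc. If $\mu\longrightarrow\nu$ is ordinary, then $\deg(\nu)=\deg(\Phi(\mu,\nu))$ and Lemma \ref{nl1} gives $d(\mu\longrightarrow\nu)=1$, so there is nothing to prove. Assume from now on that $\nu=[\mathcal{C};\phi,\gamma]$ is a limit augmentation with $\mathcal{C}$ essential. By Remark \ref{lmr} we may replace $\mathcal{C}$ by the family $\mathcal{A}=(\rho_Q)_Q$ of Proposition \ref{cf1}, where $\rho_Q=[\mu;Q,\nu(Q)]$ and $Q\in\Phi(\mu,\nu)$. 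The goal is to prove that
\[\deg_{\rho_Q}(\phi)=\deg\phi/\deg Q\]
for every such $Q$. Since $\deg Q=\deg(\Phi(\mu,\nu))$, the minimum in Lemma \ref{def} then equals $\deg(\nu)/\deg(\Phi(\mu,\nu))$.

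\emph{Step 1: a root-counting formula for $\deg_\rho$.} Let $\rho$ be valuation-transcendental and $Q$ a key polynomial of minimal degree for $\rho$. By Theorem \ref{t2}, $\overline{\rho}=\overline{w}_{\alpha,\delta}$ with $\alpha\in Z(Q^h)$ and $\delta=\delta(Q)$. Since $K=K^h$, we have $Q=Q^h$, and Theorem \ref{t21} says every root of $Q$ is optimizing. So all $\deg Q$ roots $\alpha'$ of $Q$ lie in the disc $D=\{\beta:\bar v(\beta-\alpha)\ge\delta\}$, and each of them gives a pair of definition by Lemma \ref{il1}. I then claim that for $f\in K[x]$,
\[\deg_\rho(f)\cdot\deg Q=\#\{\beta\in Z(f)\ \text{(with multiplicity)}: \bar v(\beta-\alpha)\ge\delta\}.\]
To prove this I will work in the graded algebra of $\overline{\rho}$. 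The initial form of $x-\beta$ is a unit if $\beta\notin D$, and equals $\mathrm{in}(x-\alpha)$ if $\beta\in D$. Hence $\mathrm{in}(Q)=\mathrm{in}(x-\alpha)^{\deg Q}$ and $\mathrm{in}(f)=u\cdot\mathrm{in}(x-\alpha)^{N}$ with $u$ a unit, where $N=\#(Z(f)\cap D)$. Comparing with the $Q$-expansion of $f$, where $\deg_\rho(f)$ is the largest power of $\mathrm{in}(Q)$ dividing $\mathrm{in}(f)$, gives the formula. This step is the main technical obstacle, specifically matching $\deg_\rho$, defined via $Q$-expansions over $K$, with divisibility by $\mathrm{in}(x-\alpha)$ over $\overline{K}$. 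If the graded-algebra route is awkward, I would instead argue directly with $\overline{w}_{\alpha,\delta}$ applied to the $(x-\alpha)$-expansion.

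\emph{Step 2: all roots of $\phi$ lie in the disc of each $\rho_Q$.} Since $\rho_Q<\nu$, write $\overline{\rho_Q}=\overline{w}_{\alpha_Q,\delta_Q}$ and $\overline{\nu}=\overline{w}_{\alpha_\nu,\delta_\nu}$ with a compatible choice of common extensions. Then (\ref{eb}) gives $\delta_Q\le\delta_\nu$ and $\bar v(\alpha_Q-\alpha_\nu)\ge\delta_Q$. There are two cases for $\nu$.
\begin{enumerate}[(i)]
\item If $\gamma<\infty$, then $\phi\in\operatorname{KP}(\nu)$, and Theorem \ref{t21} with $\phi=\phi^h$ shows every root $\beta$ of $\phi$ satisfies $\bar v(\beta-\alpha_\nu)\ge\delta_\nu$.
\item If $\operatorname{supp}(\nu)=\phi K[x]$, then $\phi$ is irreducible over the Henselian field $K$, so all its roots are conjugate and give the same $\overline{w}$ by uniqueness of the extension of $v$.
\end{enumerate}
In both cases $\bar v(\beta-\alpha_Q)\ge\min\{\delta_\nu,\delta_Q\}=\delta_Q$, so all $\deg\phi$ roots lie in the disc of $\rho_Q$. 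Step 1 then gives $\deg_{\rho_Q}(\phi)=\deg\phi/\deg Q$ for all $Q$, and Lemma \ref{def} finishes the proof.
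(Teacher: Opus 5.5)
The paper does not prove this lemma; it only quotes it from \cite{EN3}. Your reduction is a sound strategy: the ordinary case follows from Lemma \ref{nl1}, and in the limit case it suffices to show $\deg_{\rho_Q}(\phi)=\deg\phi/\deg Q$ for every $Q\in\Phi(\mu,\nu)$. The gap is in the geometric input.

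\textbf{The false claim.} You assert that, because $K=K^h$, all roots of $Q$ lie in the single disc $D=\{\beta:\bar v(\beta-\alpha)\ge\delta\}$ of one fixed common extension $\overline\rho=\overline w_{\alpha,\delta}$. You assert the same for the roots of $\phi$ and the disc of $\overline\nu$. Both claims are false in general, because a fixed common extension need not be Galois-invariant, even over a Henselian field.

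For example, take $K=\mathbb{Q}_p$ with $p$ odd and $\rho=w_{\sqrt p,1}$. The pair $(\sqrt p,1)$ is minimal because $\bar v(\sqrt p-c)\le 1/2$ for all $c\in K$, so $Q=x^2-p$ is a key polynomial of minimal degree. For $\overline\rho=\overline w_{\sqrt p,1}$ the disc $D$ contains $\sqrt p$ but not $-\sqrt p$, since $\bar v(2\sqrt p)=1/2<1$. The conjugate extension $\overline w_{-\sqrt p,1}$ has the same restriction to $K[x]$ and a disjoint disc.

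Consequences for your steps:
\begin{enumerate}[(i)]
\item In this example $\deg_\rho(Q)\deg Q=2\neq 1=\#(Z(Q)\cap D)$, so your Step 1 formula is false. Whatever the precise content of Theorem \ref{t21}, it cannot mean that every root of $Q^h$ is optimizing for one fixed $\overline\rho$.
\item Step 2 fails for the same reason. In case (ii), $\overline\nu=\overline w_{\theta,\infty}$ has disc $\{\theta\}$. The conjugates of $\theta$ induce the same valuation on $K[x]$, but not the same $\overline w$ on $\overline K[x]$.
\item If $\bar v(\beta-\alpha)=\delta$, then $\mathrm{in}(x-\beta)=\mathrm{in}(x-\alpha)+\mathrm{in}(\alpha-\beta)$, not $\mathrm{in}(x-\alpha)$.
\item The largest power of $\mathrm{in}(Q)$ dividing $\mathrm{in}(f)$ is $\min S_{\rho,Q}(f)$, not $\deg_\rho(f)=\max S_{\rho,Q}(f)$. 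For $\rho=w_{0,1/2}$ over $\mathbb{Q}_p$ and $f=x^2-p$, one has $\deg_\rho(f)=2$, yet $\mathrm{in}(x)\nmid\mathrm{in}(f)$.
\end{enumerate}

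\textbf{How to repair it.} Root counting works once you keep track of the Galois orbit of the disc.

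First, a corrected counting identity. For $g\in\overline K[x]$, the index $\max\{i:\bar v(c_i)+i\delta=\overline\rho(g)\}$ in the $(x-\alpha)$-expansion equals the number of roots of $g$ in $D$, counted with multiplicity (Newton polygon over $\overline K$). By minimality of $(\alpha,\delta)$, the coefficients of the $Q$-expansion of $f$ have no roots in $D$. These two facts give
\[
\#(Z(f)\cap D)=\deg_\rho(f)\cdot\#(Z(Q)\cap D),\qquad f\in K[x].
\]

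Next, apply this to $\rho=\rho_Q$, its disc $D_Q$, and the limit key polynomial $\phi$.
\begin{enumerate}[(i)]
\item Instability of $\phi$ gives $\rho_Q(\phi)<\nu(\phi)$.
\item This forces $\deg_{\rho_Q}(\phi)\ge 1$. Otherwise $\nu(\phi)=\nu(\phi_0)=\rho_Q(\phi)$, because $\mu$, $\rho_Q$ and $\nu$ agree in degrees $<\deg Q$. Hence $D_Q$ contains a root of $\phi$.
\item Since $K$ is Henselian, $\operatorname{Aut}(\overline K|K)$ preserves $\bar v$. So the $N$ distinct conjugate discs $\sigma(D_Q)$ are pairwise disjoint.
\item Each conjugate disc contains the same number $n_Q$ of roots of $Q$ and $n_\phi$ of roots of $\phi$, and together they contain all roots of both.
\item Therefore $\deg Q=Nn_Q$, $\deg\phi=Nn_\phi$, and $\deg_{\rho_Q}(\phi)=n_\phi/n_Q=\deg\phi/\deg Q$.
\end{enumerate}
Your argument is the special case $N=1$, which does not hold in general.
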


 \begin{theorem}(\cite[Theorem 1.1]{EN3})
 	Let $\mu\longrightarrow\nu$ be an augmentation of valuations on $K[x]$ and $\mu^h\longrightarrow\nu^h$ the corresponding augmentation on $K^h[x].$ Then\[d(\mu\longrightarrow\nu)=d(\mu^h\longrightarrow\nu^h).\]
 \end{theorem}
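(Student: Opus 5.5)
The plan is to compute both defects through the characterization in Lemma \ref{def}, and to transport each ingredient along $\mu\mapsto\mu^h$ using Lemmas \ref{df2} and \ref{df3} and Proposition \ref{df1}. Fix $\phi$, either a key polynomial of minimal degree for $\nu$ or a generator of $\operatorname{supp}(\nu)$. By Proposition \ref{df1}, or by Remark \ref{r21} in the support case, $\phi$ has a distinguished irreducible factor $\phi^h$ in $K^h[x]$. This $\phi^h$ is a minimal degree key polynomial for $\nu^h$, or a generator of $\operatorname{supp}(\nu^h)$. So $\phi^h$ may be used on the Henselian side to compute $d(\mu^h\to\nu^h)$.

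\emph{Step 1: transporting the tangent direction.} For $Q\in\Phi(\mu,\nu)$, Lemma \ref{df2} gives $\rho_Q^h=[\mu^h;Q^h,\beta^h]$ with $\beta^h=\nu(Q)-\mu^h(Q/Q^h)$. I will show that $Q^h\in\Phi(\mu^h,\nu^h)$ and $\beta^h=\nu^h(Q^h)$. The idea is that the cofactor $Q/Q^h$ has roots that are not optimizing (Theorem \ref{t21}), hence $\mu^h(Q/Q^h)=\nu^h(Q/Q^h)$. Then
\[\nu^h(Q^h)=\nu(Q)-\nu^h(Q/Q^h)=\beta^h,\]
so that $\rho_Q^h=\rho_{Q^h}$ in the notation of Lemma \ref{def} for the pair $\mu^h<\nu^h$.

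Next I need the family $\{Q^h : Q\in\Phi(\mu,\nu)\}$ to be cofinal in $\Phi(\mu^h,\nu^h)$ for the pre-order induced by $\nu^h$.
\begin{itemize}
\item In the ordinary case this is trivial, because $\nu^h$ is ordinary by Lemma \ref{df2} and both defects equal $1$ by Lemma \ref{nl1}.
\item In the limit case I will use Remark \ref{rem2}, Lemma \ref{df3} and Remark \ref{lmr}. They show that $\mathcal{C}^h$ is a continuous family of augmentations of $\mu^h$ with $\nu^h=[\mathcal{C}^h;\phi^h,\gamma^h]$. By the analogue of Remark \ref{lmr} on $K^h[x]$, the values $\nu^h(Q^h)$ are then cofinal in $\nu^h(\Phi(\mu^h,\nu^h))$.
\end{itemize}

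\emph{Step 2: comparing degrees.} The goal is
\[\deg_{\rho_Q}(\phi)=\deg_{\rho_Q^h}(\phi^h)\qquad\text{for all sufficiently large } Q.\]
I will first prove $\deg_{\rho}(f)=\deg_{\rho^h}(f)$ for every $f\in K[x]$ and every valuation-transcendental $\rho$. This extends the second statement of Proposition \ref{df1} from key polynomials to arbitrary $f$. The argument uses multiplicativity of $\deg_\rho$, since $\deg_\rho(fg)=\deg_\rho f+\deg_\rho g$ via the graded algebra, together with the same degree-ratio argument as in Remark \ref{rem2}.

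Then I factor $\phi=\phi^h\cdot G$ in $K^h[x]$ and apply additivity of $\deg_{\rho_Q^h}$. It remains to see that $\deg_{\rho_Q^h}(G)=0$ for large $Q$. Every irreducible factor of $G$ is $\mathcal{C}^h$-stable by Lemma \ref{df3}, or unchanged from $\mu^h$ to $\nu^h$ in the ordinary case. For such a factor $F$ we have $\rho_Q^h(F)=\nu^h(F)$ once $Q$ is large. Hence $Q^h\nmid_{\rho_Q^h}F$, and so $\deg_{\rho_Q^h}(F)=0$.

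\emph{Conclusion and main obstacle.} Taking minima over cofinal sets and using the cofinality clause of Lemma \ref{def} on both sides gives $d(\mu\to\nu)=d(\mu^h\to\nu^h)$. I expect the main obstacle to be Step 2: the identity $\deg_\rho(f)=\deg_{\rho^h}(f)$ for arbitrary $f$, combined with the vanishing of $\deg_{\rho_Q^h}$ on the cofactor $G$. My first attempt will compare $S_{\rho,Q}(f)$ and $S_{\rho^h,Q^h}(f)$ through the common extension $\overline{w}_{\alpha,\delta}$ of Theorem \ref{t2}. In that description, $\deg_\rho(f)$ should be the number of roots $\beta$ of $f$ with $\bar v(\alpha-\beta)\ge\delta$, divided by the number of such roots of $Q$. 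Both of these counts are visibly unchanged by passing to $K^h$.
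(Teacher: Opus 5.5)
\textbf{Verdict: genuine gap.} Your outline has a real gap in the limit case, plus two smaller logical slips. The paper does not prove this statement; it quotes it from \cite{EN3}, so your outline has to stand on its own.

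\textbf{What works.} The root count you describe at the end gives $\deg_\rho(f)=\deg_{\rho^h}(f)$ for all $f\in K[x]$ and valuation-transcendental $\rho$. That count is the number of roots of $f$ in the closed ball of $\overline{w}_{\alpha,\delta}$, divided by the same count for a minimal-degree key polynomial $\psi$ of $\rho$. By Theorem~\ref{t21} the count is the same for $\psi$ and $\psi^h$. Step 1 ($Q^h\in\Phi(\mu^h,\nu^h)$ and $\rho_Q^h=\rho_{Q^h}$) is also fine.

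\textbf{The gap: cofinality in the limit case.} The $K^h$-analogue of Remark~\ref{lmr} rests on the lemma before it, which requires $\mathcal{C}^h$ to be \emph{essential}. That fails in general. It is exactly what makes the depth of $\mu^h$ drop, and the paper needs $\phi_n=\phi_n^h$ to get essentiality in its depth theorem of Section 3.

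For a concrete failure, take $K=\mathbb{Q}$ with the $5$-adic valuation, $\mu=w_{0,0}$, $\phi=x^2-6$ and $\theta=\sqrt{6}\in K^h$. Let $\mathcal{C}=(w_{b_k,k})_{k\geq 1}$ with $b_k\in\mathbb{Z}$ and $v(\theta-b_k)\geq k$.
\begin{itemize}
\item Over $K$ this is an essential family with limit key polynomial $\phi$, and $\nu=[\mathcal{C};\phi,\infty]$.
\item Over $K^h$ the factor $\phi^h=x-\theta$ has degree $1=\deg(\mathcal{C}^h)$. So $\mathcal{C}^h$ is inessential and $\nu^h=[\mu^h;x-\theta,\infty]$ is an ordinary augmentation.
\item Now $\nu^h(\Phi(\mu^h,\nu^h))$ has maximum $\infty=\nu^h(x-\theta)$. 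But every $\nu^h(Q^h)=v(\theta-b)$, for $Q=x-b\in\Phi(\mu,\nu)$, is finite, so $\{Q^h\}$ is not cofinal.
\end{itemize}

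\textbf{How to repair it.} Split the limit case.
\begin{itemize}
\item If $\deg(\mathcal{C}^h)<\deg\phi^h$, your argument goes through.
\item If $\deg(\mathcal{C}^h)=\deg\phi^h$, then $\phi^h\in\Phi(\mu^h,\nu^h)$ and $\nu^h=[\mu^h;\phi^h,\gamma^h]$. So $d(\mu^h\to\nu^h)=1$ by Lemma~\ref{nl1}, and you must show $d(\mu\to\nu)=1$ directly. Your Step 2 does this. Since $\deg Q^h=\deg\phi^h$, the $Q^h$-expansion of $\phi^h$ is $Q^h+(\phi^h-Q^h)$. Also $\nu^h(Q^h)<\gamma^h$: otherwise $\rho_Q^h=\nu^h$, hence $\rho_Q=\nu$, which is impossible for an essential limit augmentation. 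Together these force $\deg_{\rho_{Q^h}}(\phi^h)=1$.
\end{itemize}

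\textbf{Slip 1: the vanishing of $\deg_{\rho_Q^h}(F)$ in Step 2.} The chain ``$\rho_Q^h(F)=\nu^h(F)$, hence $Q^h\nmid_{\rho_Q^h}F$, hence $\deg_{\rho_Q^h}(F)=0$'' fails at both links.
\begin{itemize}
\item The equality only rules out $\rho_Q^h$-divisibility by elements of $\Phi(\rho_Q^h,\nu^h)$, and these are not $\rho_Q^h$-equivalent to $Q^h$.
\item Non-divisibility by $Q^h$ controls $\min S_{\rho_Q^h,Q^h}(F)$, whereas $\deg_{\rho_Q^h}(F)=\max S_{\rho_Q^h,Q^h}(F)$.
\end{itemize}
What works is stability between two members. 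For large $Q_1<Q$ one has $\rho^h_{Q_1}(F)=\rho^h_Q(F)$. A root of $F$ in the closed ball attached to $\rho^h_Q$ would instead force $\rho^h_{Q_1}(F)<\rho^h_Q(F)$.

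\textbf{Slip 2: comparing the minima.} Lemma~\ref{def} only says the minimizing set is cofinal. Cofinality of $\{Q^h\}$ therefore does not by itself give a $Q$ with $\deg_{\rho_{Q^h}}(\phi^h)=d(\mu^h\to\nu^h)$. You also need $Q'\mapsto\deg_{\rho_{Q'}}(\phi^h)$ to be non-increasing on $\Phi(\mu^h,\nu^h)$. This follows from your root count: over $K^h$ the balls are nested, and the normalizing count equals $\deg Q'$, which is constant.
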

 From the above two results, it follows that 
 
 \begin{proposition}\label{dh}
 	For any augmentation $\mu\longrightarrow\nu$, we have 
 	\[d(\mu\longrightarrow\nu)=\deg(\nu^h)/ \deg (\Phi(\mu^h,\nu^h)).\]
 \end{proposition}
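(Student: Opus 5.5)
The plan is to combine the two results quoted just before the statement, namely Lemma \ref{nl2} and Theorem 1.1 of \cite{EN3}. Fix an augmentation $\mu\longrightarrow\nu$ of valuations on $K[x]$. The first step is to pass to the Henselization. By \cite[Theorem A]{EN5}, $\mu$ and $\nu$ have unique canonical extensions $\mu^h$ and $\nu^h$ to $K^h[x]$. By Lemmas \ref{df2} and \ref{df3} (with Remark \ref{rem2} and Lemma \ref{vt1} handling limit augmentations through continuous families of augmentations), $\mu^h\longrightarrow\nu^h$ is again an augmentation, ordinary or limit, of valuations on $K^h[x]$. So $d(\mu^h\longrightarrow\nu^h)$ makes sense, and Theorem 1.1 of \cite{EN3} gives
\[d(\mu\longrightarrow\nu)=d(\mu^h\longrightarrow\nu^h).\]

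The second step is to apply Lemma \ref{nl2} over the base field $(K^h,v^h)$. This field is Henselian, since $v^h$ has a unique extension to $\overline{K}$ and hence to every finite extension of $K^h$. The lemma is stated for an arbitrary Henselian valued field, with degrees and tangent directions computed over that field. Applied to $\mu^h\longrightarrow\nu^h$, it yields
\[d(\mu^h\longrightarrow\nu^h)=\deg(\nu^h)/\deg(\Phi(\mu^h,\nu^h)),\]
where $\deg(\nu^h)$ and $\deg(\Phi(\mu^h,\nu^h))$ are taken in $K^h[x]$. Chaining the two equalities gives the claim.

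The only point needing care is the first step: the phrase ``the corresponding augmentation on $K^h[x]$'' in Theorem 1.1 of \cite{EN3} should be exactly $\mu^h\longrightarrow\nu^h$, which is what Lemmas \ref{df2} and \ref{df3} provide. Everything else is a direct substitution, so I do not expect any real obstacle.
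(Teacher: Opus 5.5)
Your proposal is correct and is exactly the paper's argument: the paper derives the statement in one line by combining Theorem 1.1 of \cite{EN3}, which gives $d(\mu\longrightarrow\nu)=d(\mu^h\longrightarrow\nu^h)$, with Lemma \ref{nl2} applied over the Henselian field $(K^h,v^h)$. Your remark that $\mu^h\longrightarrow\nu^h$ must be the corresponding augmentation supplied by Lemmas \ref{df2} and \ref{df3} is a reasonable point to check, and the paper likewise takes it as given in Theorem 1.1 of \cite{EN3}.
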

 
 The following result shows that the defect of any limit augmentation can also be defined by using any limit key polynomial.
 \begin{proposition}
 	For any limit augmentation $\mu\longrightarrow \nu$ such that $\nu=[\mathcal{C};\phi,\gamma]$ for an essential continuous family $\mathcal{C}=(\rho_i)_{i\in\mathbf{A}}$ of augmentations of $\mu$, we have \[d(\mu\longrightarrow\nu)=d(\mu\longrightarrow\nu_{\mathcal{C}}),\] where $\nu_{\mathcal{C}}$ is the minimal limit augmentation of  $\mathcal{C}$. Moreover, it is independent of the choice of any limit key polynomial of $\mathcal{C}.$
 \end{proposition}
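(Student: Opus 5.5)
We compare the two defects through Lemma \ref{def} and reduce both to the same family of valuations $\rho_Q$. By Lemma \ref{def}, $d(\mu\longrightarrow\nu)$ is the minimum of $\deg_{\rho_Q}(\phi')$ over $Q\in\Phi(\mu,\nu)$, where $\rho_Q=[\mu;Q,\nu(Q)]$ and $\phi'$ is any minimal degree key polynomial for $\nu$ (or generator of its support). The same lemma computes $d(\mu\longrightarrow\nu_{\mathcal{C}})$ as the minimum of $\deg_{\rho_Q}(\phi'')$ over $Q\in\Phi(\mu,\nu_{\mathcal{C}})$, with $\phi''$ a minimal degree key polynomial for $\nu_{\mathcal{C}}$.

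First I show $\Phi(\mu,\nu)=\Phi(\mu,\nu_{\mathcal{C}})$ and that the values $\nu(Q)$ and $\nu_{\mathcal{C}}(Q)$ agree for $Q$ in this set. By Lemma \ref{vt1}, whenever $\gamma>\gamma_{\mathcal{C}}$ we have $\nu=[\nu_{\mathcal{C}};\phi,\gamma]$, so $\mu<\nu_{\mathcal{C}}<\nu$, and Corollary \ref{2.5}(ii) gives the equality of tangent directions. If instead $\gamma=\gamma_{\mathcal{C}}$, then $\nu=\nu_{\mathcal{C}}$ and there is nothing to prove.

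For $Q$ in the common tangent direction, $\deg Q=\deg(\mathcal{C})<\deg\phi$. Both $\nu$ and $\nu_{\mathcal{C}}$ evaluate polynomials of degree less than $\deg\phi$ through the stable value $\rho_{\mathcal{C}}$, since a $\phi$-expansion of such a polynomial has a single term. Hence $\nu(Q)=\rho_{\mathcal{C}}(Q)=\nu_{\mathcal{C}}(Q)$. It follows that the valuations $\rho_Q$ are literally the same in both computations.

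Next I pick a common polynomial to measure $\deg_{\rho_Q}$. By construction $\phi$ is a key polynomial of minimal degree for $\nu$ when $\gamma<\infty$, and $\operatorname{supp}(\nu)=\phi K[x]$ when $\gamma=\infty$. By Lemma \ref{vt1}, $\phi\in\operatorname{KP}_\infty(\mathcal{C})=\operatorname{KP}(\nu_{\mathcal{C}})$, and it has minimal degree there because $\deg(\nu_{\mathcal{C}})=\deg\phi$. So $\phi$ is admissible in Lemma \ref{def} for both augmentations. The two minima are then taken over the same index set of the same quantity $\deg_{\rho_Q}(\phi)$, and $d(\mu\longrightarrow\nu)=d(\mu\longrightarrow\nu_{\mathcal{C}})$.

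For the final claim, let $\phi_1$ be another limit key polynomial of $\mathcal{C}$. Since $\nu_{\mathcal{C}}$ does not depend on the limit key polynomial (\cite[Lemma 4.7]{Ma2}), and $\phi_1\in\operatorname{KP}(\nu_{\mathcal{C}})$ has minimal degree, Lemma \ref{def} shows that $\min_Q\deg_{\rho_Q}(\phi_1)$ equals $d(\mu\longrightarrow\nu_{\mathcal{C}})$. That is, the defect may be computed with any limit key polynomial.

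The main obstacle I expect is a formal one. The valuation $\nu_{\mathcal{C}}$ takes values in $\mathbb{R}_{sme}$, so I need that Lemma \ref{def} and Corollary \ref{2.5} apply after embedding $\Gamma$ into this larger group. I would handle this by regarding all valuations involved as $\mathbb{R}_{sme}$-valued, which changes neither the tangent directions nor the valuations $\rho_Q$.
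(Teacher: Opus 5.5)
Your proposal is correct and follows essentially the same route as the paper. The shared steps are: the case split $\gamma=\gamma_{\mathcal{C}}$ versus $\gamma>\gamma_{\mathcal{C}}$; Lemma \ref{vt1} with Corollary \ref{2.5} to get $\Phi(\mu,\nu)=\Phi(\mu,\nu_{\mathcal{C}})$; the equality $\nu(Q)=\nu_{\mathcal{C}}(Q)$ from $\deg Q<\deg\phi$, so the valuations $\rho_Q$ coincide; and independence of the limit key polynomial via $\operatorname{KP}(\nu_{\mathcal{C}})=\operatorname{KP}_\infty(\mathcal{C})$ and Lemma \ref{def}. Your single-term $\phi$-expansion argument and your explicit check that $\phi$ is a minimal-degree key polynomial of $\nu_{\mathcal{C}}$ only spell out steps the paper leaves implicit.
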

 \begin{proof}
 	As $\nu=[\mathcal{C};\phi,\gamma]$ is a limit augmentation, $\phi$ is a minimal degree key polynomial for $\nu.$ Therefore, by the definition of defect, we have
 	\begin{align}\label{e1}
 		d(\mu\longrightarrow\nu)=\min\{\deg_{\rho_Q}(\phi) \mid Q\in \Phi(\mu,\nu)\},
 		\end{align}
 		where $\rho_Q=[\mu;Q,\nu(Q)]$ for every $Q\in\Phi(\mu,\nu).$ Also, we know that $\nu_{\mathcal{C}}=[\mathcal{C};\phi,\gamma_{\mathcal{C}}]$ is the minimal limit augmentation for $\mathcal{C},$ where $\gamma_{\mathcal{C}}=\sup\{\rho_i(\phi)\mid i\in\mathbf{A}\}.$ If $\gamma=\gamma_\mathcal{C}$ then $\nu=\nu_\mathcal{C}$ and hence, $d(\mu\longrightarrow\nu)=d(\mu\longrightarrow\nu_{\mathcal{C}}).$
 		
 		 Otherwise, for $\gamma>\gamma_\mathcal{C},$ by Lemma \ref{vt1}, we have that $\nu=[\nu_{\mathcal{C}};\phi,\gamma]$ is an ordinary augmentation. Here, $\mu<\nu_\mathcal{C}<\nu;$ therefore, by Corollary \ref{2.5}, \[\Phi(\mu,\nu)=\Phi(\mu,\nu_\mathcal{C}).\] As $\deg(\Phi(\mu,\nu))=\deg(\mathcal{C})<\deg\phi$, for any $Q\in\Phi(\mu,\nu),$ $\deg Q<\deg\phi$ implies that \[\nu(Q)=\nu_{\mathcal{C}}(Q) \ \text{and}\  \rho_Q=[\mu;Q,\nu_\mathcal{C}(Q)].\] Then, for the minimal limit augmentation $\mu\longrightarrow\nu_\mathcal{C},$ we have \begin{align*}
 			d(\mu\longrightarrow\nu_\mathcal{C})=\min\{\deg_{\rho_Q}(\phi) \mid Q\in \Phi(\mu,\nu)\}.
 		\end{align*}
 		On comparing the above equation with (\ref{e1}), we obtain
 		\[d(\mu\longrightarrow\nu)=d(\mu\longrightarrow\nu_{\mathcal{C}}).\] 
 		
 		Since $d(\mu\longrightarrow\nu_{\mathcal{C}})$ is independent of the choice of a minimal degree key polynomial for $\nu_\mathcal{C},$ and $\operatorname{KP}(\nu_\mathcal{C})=\operatorname{KP}_{\infty}(\mathcal{C}),$ we have that, $d(\mu\longrightarrow\nu)$ is independent of the choice of any limit key polynomial $\phi\in\operatorname{KP}_{\infty}(\mathcal{C}).$
  \end{proof}
  
  Using the above proposition, we now prove that the defect of the augmentation steps is independent of the choice of an MLV chain of $\mu.$
  \begin{theorem}\label{td1}
  	The defect of an augmentation step in an MLV chain of valuation $\mu$ is an intrinsic property.
  \end{theorem}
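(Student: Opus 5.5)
The plan is to reduce the statement to the known intrinsic data of MLV chains from \cite[Corollary 4.4, Theorem 4.7]{EN1}. Take two MLV chains of $\mu$,
\[\mu_0\xrightarrow{\phi_1,\gamma_1}\mu_1\to\cdots \quad\text{and}\quad \mu_0'\xrightarrow{\phi_1',\gamma_1'}\mu_1'\to\cdots .\]
We must show that $d(\mu_n\to\mu_{n+1})=d(\mu_n'\to\mu_{n+1}')$ for every $n$. By the cited results, both chains have the same depth, the same degrees $\deg(\mu_n)=\deg(\mu_n')$, and the same nature of each step. The proof therefore splits according to whether the $n$-th step is ordinary or limit. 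If the step is ordinary, Lemma \ref{nl1} gives defect $1$ in both chains, and nothing more is needed.

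For a limit step, I would first use the uniqueness part of Nart's theory: in any two MLV chains of $\mu$, the corresponding nodes satisfy $\mu_n=\mu_n'$. This holds up to the non-uniqueness of the defining key polynomials, since the valuations themselves are determined as the intermediate valuations where the degree function jumps (\cite[Theorem 4.7]{EN1}). If this equality of nodes is available, then $\mu_n\to\mu_{n+1}$ and $\mu_n'\to\mu_{n+1}'$ are literally the same augmentation, possibly presented by different essential families $\mathcal{C},\mathcal{C}'$ and different limit key polynomials $\phi_{n+1},\phi_{n+1}'$.

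If instead only the weaker statement is available, I would argue as follows. The last node $\mu_{n+1}$ is common to both chains or is again determined by the chain. The tangent direction $\Phi(\mu_n,\mu_{n+1})$ depends only on the pair of valuations. By Remark \ref{lmr}, both $\mathcal{C}$ and $\mathcal{C}'$ are cofinal in the canonical family $\mathcal{A}$ of Proposition \ref{cf1} built from $\Phi(\mu_n,\mu_{n+1})$. Hence they are equivalent, and they have the same limit key polynomials and the same minimal limit augmentation $\nu_{\mathcal{C}}=\nu_{\mathcal{C}'}$. The preceding proposition then gives
\[d(\mu_n\to\mu_{n+1})=d(\mu_n\to\nu_{\mathcal{C}})=d(\mu_n'\to\nu_{\mathcal{C}'})=d(\mu_n'\to\mu_{n+1}'),\]
independently of the chosen limit key polynomial. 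One could alternatively invoke Lemma \ref{def}, which says the defect depends only on the pair $(\mu_n,\mu_{n+1})$.

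The main obstacle is justifying that the nodes $\mu_n$ coincide across MLV chains, or at least that the pairs $(\mu_n,\mu_{n+1})$ determine the same augmentation, when the chains differ in their defining data. The values $\gamma_{n+1}$ and the key polynomials may differ; for example, $\mu_{n+1}$ may be $\mu$ itself or an intermediate valuation. I would handle this by characterizing $\mu_n$ intrinsically as $\mu$ truncated at the $n$-th degree jump, following \cite[Section 4]{EN1}. If $\mu_{n+1}$ differs between the chains only in $\gamma_{n+1}$ (this happens only at the last step), the preceding proposition still shows that the defect equals $d(\mu_n\to\nu_{\mathcal{C}})$, which depends only on $\mu_n$ and the family, so the value of $\gamma$ is irrelevant.
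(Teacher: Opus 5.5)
There is a genuine gap, and it lies in the limit case, which is the only case with content.

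Your primary route rests on the claim that corresponding nodes of two MLV chains coincide. This is false for a node preceding a limit step. Suppose $\mu_0\to\mu_1$ is a limit augmentation, and let $\rho_\alpha=[\mu_0;x-a,\alpha]$ be any member of the canonical family $\mathcal{A}$ of Proposition~\ref{cf1}, so that $\mu(x-a)=\alpha$. Then:
\begin{enumerate}[(i)]
\item $\rho_\alpha=w_{a,\alpha}$ is again a depth-zero valuation;
\item the tail $(\rho_\beta)_{\beta>\alpha}$ is an essential continuous family of augmentations of $\rho_\alpha$, with the same limit key polynomials and the same limit augmentation $\mu_1$;
\item $x-a\notin\Phi(\rho_\alpha,\mu_1)$, since $\rho_\alpha(x-a)=\alpha=\mu(x-a)$.
\end{enumerate}
Replacing $\mu_0$ by $\rho_\alpha\neq\mu_0$ therefore gives another MLV chain of $\mu$. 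The uniqueness results of \cite{EN1} make the depth, the degrees and the nature of the steps intrinsic, but not these nodes. No ``degree-jump'' characterization can recover $\mu_n$ either, because the degree is the same on $\mu_n$ and on every member of $\mathcal{A}$.

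Your fallback does not avoid the problem. $\mathcal{C}'$ consists of augmentations of $\mu_n'$, so saying it is cofinal in the family $\mathcal{A}$ built from $\Phi(\mu_n,\mu_{n+1})$ already presupposes what must be proved. Lemma~\ref{def} only says the defect is a function of the pair $(\mu_n,\mu_{n+1})$, and the two chains give different pairs; even $\mu_{n+1}\neq\mu_{n+1}'$ is possible when the next step is also a limit step. So the middle equality $d(\mu_n\to\nu_{\mathcal{C}})=d(\mu_n'\to\nu_{\mathcal{C}'})$ in your display is the whole theorem, asserted without argument.

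The missing idea is to show that the canonical families $\mathcal{A}$ over $\mu_n$ and $\mathcal{A}'$ over $\mu_n'$ share a common tail, and to combine this with the cofinality statement in Lemma~\ref{def}. The paper argues as follows.
\begin{enumerate}[(i)]
\item $\mathcal{C}$ and $\mathcal{C}'$ are equivalent by \cite[Lemma 3.6]{EN1}. Hence so are the cofinal subfamilies $\mathcal{A}_d\subseteq\mathcal{A}$ and $\mathcal{A}'_{d'}\subseteq\mathcal{A}'$ on which the minima $d$ and $d'$ are attained.
\item Choose $\rho_\alpha\in\mathcal{A}_d$ above some member of $\mathcal{A}'_{d'}$. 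For every $\beta>\alpha$ with $\rho_\beta\in\mathcal{A}_d$ one gets $\mu_n'(\chi_\beta)\le\rho_\alpha(\chi_\beta)=\alpha<\beta=\mu(\chi_\beta)=\mu_{n+1}'(\chi_\beta)$. By degree, this gives $\chi_\beta\in\Phi(\mu_n',\mu_{n+1}')$.
\item Since $\mu_n$, $\mu_n'$ and $\mu$ agree on polynomials of degree $<\deg(\mu_n)=\deg(\mu_n')$, we get $\rho_\beta=[\mu_n;\chi_\beta,\beta]=[\mu_n';\chi_\beta,\beta]\in\mathcal{A}'$.
\item By the independence of the limit key polynomial (the preceding proposition), $d=\deg_{\rho_\beta}(\psi_{n+1})\ge d'$. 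Symmetry gives $d=d'$.
\end{enumerate}
Working with $\mu$-values rather than $\mu_{n+1}$-values is what makes this argument insensitive to whether $\mu_{n+1}=\mu_{n+1}'$.
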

  \begin{proof}
  	Consider two different MLV chains for $\mu$: \[(v\xrightarrow{\phi_0})\mu_0\xrightarrow{\phi_1} \mu_1\xrightarrow{\phi_2}\cdots \longrightarrow \mu_{n}\xrightarrow{\phi_{n+1}} \mu_{n+1}\longrightarrow\cdots\longrightarrow \mu,\] and
  	\[(v\xrightarrow{\psi_0})w_0\xrightarrow{\psi_1} w_1\xrightarrow{\psi_2}\cdots \longrightarrow w_{n}\xrightarrow{\psi_{n+1}} w_{n+1}\longrightarrow\cdots\longrightarrow \mu.\]    To prove the result, it is enough to show that for every $n,$ \[d(\mu_n\longrightarrow \mu_{n+1})=d(w_n\longrightarrow w_{n+1}).\] By the uniqueness of MLV chains (see \cite[Theorem 4.7]{EN1}), we have that $\mu_n\longrightarrow \mu_{n+1}$ is ordinary (or limit) if and only if $w_n\longrightarrow w_{n+1}$ is ordinary (or limit). 
  	
  	If $\mu_n\longrightarrow \mu_{n+1}$ is ordinary, then by Lemma \ref{nl1}, \[d(\mu_n\longrightarrow \mu_{n+1})=1=d(w_n\longrightarrow w_{n+1}).\]
  	
  	Assume now that  $\mu_n\longrightarrow \mu_{n+1}$ is a limit augmentation. Let $\mathcal{C}$ and $\mathcal{C}'$ be the defining essential continuous families for the limit augmentations $\mu_n\longrightarrow \mu_{n+1}$ and  $w_n\longrightarrow w_{n+1},$ respectively. By the uniqueness of MLV chains, $\mathcal{C}$ and $\mathcal{C}'$ are equivalent, i.e., both families have the same stable and unstable polynomials (see \cite[Lemma 3.6]{EN1}). Therefore, $\phi_{n+1},\psi_{n+1}\in\operatorname{KP}_{\infty}(\mathcal{C})=\operatorname{KP}_{\infty}(\mathcal{C}').$ Using Remark \ref{lmr}, we have that  $\mathcal{A}=(\rho_{\alpha})_{\alpha\in\mathbb{A}}$ and $\mathcal{A}'=(\rho_{i})_{i\in\mathbb{A}'}$ are essential continuous families of augmentations of $\mu_n$ and $w_n,$ respectively, where, \[\mathbb{A}=\mu(\Phi(\mu_n,\mu_{n+1})),\   \rho_\alpha=[\mu_n;\chi_\alpha,\alpha]\  \text{with}\  \chi_\alpha\in\Phi(\mu_n,\mu_{n+1}),\ \mu(\chi_\alpha)=\alpha;\] \[ \mathbb{A}'=\mu(\Phi(w_n,w_{n+1})),\ \rho_i=[w_n;\chi_i,i]\ \text{with}\ \chi_i\in\Phi(w_n,w_{n+1}),\ \ \mu(\chi_i)=i.\]  As $\mathcal{A}$ and $\mathcal{A}'$ are independent of the choice of key polynomials, by the definition of defect together with the above proposition, we have
  	\[d(\mu_n\longrightarrow \mu_{n+1})=\min\{\deg_{\rho_\alpha}(\psi_{n+1}) \mid \rho_\alpha\in \mathcal{A}\},\ \text{and}\] 
  	\[d(w_n\longrightarrow w_{n+1})=\min\{\deg_{\rho_i}(\phi_{n+1}) \mid \rho_i\in \mathcal{A}'\}.\] Put $d=d(\mu_n\longrightarrow \mu_{n+1})$ and $d'=d(w_n\longrightarrow w_{n+1})$. By Lemma \ref{def}, the set of all $\rho_\alpha$ with $\deg_{\rho_\alpha}(\psi_{n+1})=d$ forms a cofinal subset $\mathcal{A}_d$ of $\mathcal{A}$.  Since a cofinal subset of a continuous family is again a continuous family for the same valuation, $\mathcal{A}_d$ is a continuous family of augmentations of $\mu_n.$ Similarly, we obtain a continuous family $\mathcal{A}'_{d'}$ of augmentations of $w_n.$ Again, by Lemma 3.6 of \cite{EN1}, $\mathcal{A}_d$ and $\mathcal{A}'_{d'}$ are equivalent, i.e., they are cofinal in each other. 
  	
  	Since $w_n<\rho_i$ for some $\rho_i\in\mathcal{A}'_{d'}$ and $\mathcal{A}_d$ is cofinal in $\mathcal{A}'_{d'},$ there exists $\rho_\alpha\in\mathcal{A}_d$ such that $\rho_i<\rho_\alpha$. Then for any $\beta\in \mathbb{A}$ with $\beta>\alpha$ and $\rho_\beta\in\mathcal{A}_d$, we have \[w_n(\chi_\beta)\leq \rho_i(\chi_\beta)\leq\rho_\alpha(\chi_\beta)=\alpha<\beta=\rho_\beta(\chi_\beta)=\mu(\chi_\beta)=w_{n+1}(\chi_\beta),\] i.e., $w_n(\chi_\beta)<w_{n+1}(\chi_\beta).$ As \[\deg\chi_\beta=\deg(\mathcal{A}_d)=\deg(\mathcal{A}'_{d'})=\deg(\Phi(w_n,w_{n+1})),\] $\chi_\beta\in\Phi(w_n,w_{n+1})$ and consequently, $\chi_\beta$ is a key polynomial for $w_n.$ For any polynomial $f\in K[x]$ with $\deg f<\deg(\mu_n)=\deg(w_n),$ we have $\mu_n(f)=\mu(f)=w_n(f),$ which implies that  \[\rho_\beta=[\mu_n;\chi_\beta,\beta]=[w_n;\chi_\beta,\beta].\] Hence, $\rho_\beta\in\mathcal{A}'$ and \[\deg_{\rho_\beta}(\psi_{n+1})\geq d'=\min\{\deg_{\rho_i}(\psi_{n+1})\mid \rho_i\in\mathcal{A}'\}.\] Whereas, by the choice of $\beta$, $\deg_{\rho_\beta}(\psi_{n+1})=d$ and therefore,  $d\geq d'.$ A symmetric argument shows that $d'\geq d,$ thus $d=d'$.
   \end{proof}
 
 	Let $\mu$ be either valuation-transcendental or have a nontrivial support, i.e., $\mu$ is of Type 1 with an MLV chain (say)
 	\begin{align}\label{m1}
 				(v \longrightarrow) \mu_0  \longrightarrow \mu_1  \longrightarrow \cdots  \longrightarrow \mu_{r-1}  \longrightarrow \mu_r=\mu ,
 				\end{align}
 			then the defect of $\mu|v,$ given in \cite{EN3}, is defined as
 			\[d(\mu|v):=d(\mu_0\longrightarrow \mu_1) \cdots d(\mu_{r-1}\longrightarrow \mu).\]
 			Let $\phi$ be either a minimal degree key polynomial for $\mu$ or $\text{supp}(\mu)=\phi K[x]$, and let $L=K(\theta)$ for any $\theta\in Z(\phi).$  
 			
 			In 2007, Vaqui\'e gave a characterization for the defect of a unibranched simple algebraic extension $L$.
 			\begin{theorem}(\cite[Corollary 2.10]{V2})\label{vth1}
 				Let $v_L$ be the unique extension of $v$ to a simple algebraic extension $L,$ and $\mu$ be the valuation on $K[x]$ induced by $v_L.$ Then the defect of the extension $v_L|v$ is the product of the relative gaps of any MLV chain of $\mu.$
 			\end{theorem}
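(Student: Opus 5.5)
The plan is to compare two factorizations of $[L:K]$. On one side $[L:K]=\deg\phi=\deg(\mu)$, read off from the MLV chain. On the other side $[L:K]=e\,f\,d(v_L|v)$, which is where unibranchedness enters.

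Since $L|K$ is unibranched, Remark \ref{r21} gives that the minimal polynomial $\phi$ of $\theta$ stays irreducible over $K^h$. Hence $[L^h:K^h]=[L:K]=\deg\phi$, and
\[d(v_L|v)=\frac{\deg\phi}{e(v_L|v)\,f(v_L|v)}.\]
The valuation $\mu$ has nontrivial support $\phi K[x]$, so it is of Type 1 with a finite MLV chain $\mu_0\to\mu_1\to\cdots\to\mu_r=\mu$ and $\deg(\mu)=\deg\phi$. Moreover $\Gamma_\mu=\Gamma_{v_L}$ and $k_\mu=k_{v_L}$, because $\mu$ is $v_L$ composed with $K[x]\twoheadrightarrow L$. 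So it suffices to show that $\deg(\mu_r)/\deg(\mu_0)$, with $\deg(\mu_0)=1$, equals $(\Gamma_{\mu}:\Gamma_v)\,[k_\mu:k_v]\,\prod_n d_n$.

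I would do this step by step along the chain, writing $e_n,f_n$ for the relative ramification index and residue degree gained at the step $\mu_n\to\mu_{n+1}$.

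For an ordinary step $\mu_{n+1}=[\mu_n;\phi_{n+1},\gamma_{n+1}]$, I would invoke the classical Mac Lane–Vaqui\'e degree formula. It says $\deg\phi_{n+1}/\deg\phi_n=e_nf_n$, with $e_n$ the order of $\gamma_n$ modulo $\Gamma_{\mu_n}$ restricted to lower degrees and $f_n$ the degree of the residual polynomial. This fits the definition, since $d_n=1$ for ordinary steps.

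For a limit step $\mu_{n+1}=[\mathcal{C};\phi_{n+1},\gamma_{n+1}]$, I would use the normalized family of Remark \ref{cfr}. In that family all $\rho_i$ have the same degree $\deg(\mu_n)$ and a common value group $\Gamma_{\mathcal{C}}$, and their residue fields stabilize. Within the family the contributions of $\rho_i$ over $\mu_n$ are then again of ordinary type, of the form $e f$. The remaining factor $\deg\phi_{n+1}/\deg(\mu_n)$ is by definition the relative gap $d_n$. This factor contributes no new ramification or residue extension, because $\phi_{n+1}$ is $\mathcal{C}$-unstable and no $\mathcal{C}$-stable polynomial of degree $<\deg\phi_{n+1}$ enlarges $\Gamma_{\mathcal{C}}$ or the stable residue field. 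The last step to $\mu_r$ has $\gamma_r=\infty$. Multiplying over all steps then gives
\[\deg\phi=\Big(\prod_n e_nf_n\Big)\prod_n d_n=e(v_L|v)\,f(v_L|v)\prod_n d_n,\]
and hence $d(v_L|v)=\prod_n d_n$. Independence of the chosen MLV chain follows because the relative gaps are intrinsic.

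The main obstacle is the limit step. There one must justify that the value group and residue field of $\mu_{n+1}$ restricted to polynomials of degree $<\deg\phi_{n+1}$ are exactly the stable ones $\Gamma_{\mathcal{C}}$ and $\varinjlim k_{\rho_i}$, so that the degree jump $d_n$ is pure defect and not disguised ramification. To handle this, I would first try passing to the minimal limit augmentation $\nu_{\mathcal{C}}$ of Lemma \ref{vt1} and comparing its graded algebra with that of the $\rho_i$. Failing that, I would pass to $K^h$ via Lemma \ref{df3}: unibranchedness keeps $\phi=\phi^h$, so the degrees do not drop, and over $K^h$ Lemma \ref{nl2} identifies each $d_n$ with the defect of the corresponding step.
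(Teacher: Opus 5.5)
Your architecture is sound, and it differs from the paper's treatment: the paper gives no proof of this statement. It imports it from Vaqui\'e and only remarks that Theorem~\ref{T2} generalizes it. Your route uses unibranchedness only to get $[L^h:K^h]=\deg\phi$. It reduces everything else to the identity $\deg\phi=(\Gamma_\mu:\Gamma_v)\,[k_\mu:k_v]\prod_n d_n$, which in fact holds for every $\mu$ with nontrivial support.

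As written, however, there is a genuine gap exactly where that identity has content, namely the limit steps. Your sentence that no $\mathcal{C}$-stable polynomial of degree $<\deg\phi_{n+1}$ ``enlarges $\Gamma_{\mathcal{C}}$ or the stable residue field'' restates what must be proved, and ``I would first try $\nu_{\mathcal{C}}$'' is not an argument.

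Also, the contributions of the $\rho_i$ are not ``of the form $ef$''; they are trivial. Since $\deg\chi_i=\deg\phi_n$ and $\chi_i\not\sim_{\mu_n}\phi_n$, the ordinary degree formula forces $e_n=1$ and makes the residual polynomial of $\chi_i$ with respect to $\mu_n$ of degree $1$. Anything else would contradict $\deg\phi_{n+1}/\deg\phi_n=d_n$. From $e_n=1$ one gets $\Gamma_{\rho_i}=\Gamma_{\mu_n}$. Since every polynomial of degree $<\deg\phi_{n+1}$ is $\mathcal{C}$-stable, the value-group half of your claim is then immediate.

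The residue-field half is not formal. The inequality $\rho_i<\mu_{n+1}$ gives no field map $k_{\rho_i}\to k_{\mu_{n+1}}$, and $k_{\rho_i}$ is transcendental over $k_v$. So you must show that residues of quotients of stable lower-degree polynomials stay in the algebraic residue field $\kappa(\mu_n)$. The standard tool is the map of graded algebras $\mathcal{G}_{\rho_i}\to\mathcal{G}_{\mu_{n+1}}$, whose kernel is generated by the initial form of $\chi_j$ for $j>i$. This map sends the transcendental generator of the degree-zero part to a constant of $\kappa(\rho_i)=\kappa(\mu_n)$. You must prove or cite this lemma. The same applies to the ordinary-step bookkeeping you compress into ``multiplying over all steps'': lower-degree value group of $\mu_{n+1}$ equal to $\Gamma_{\mu_n}$, and residue extension of degree equal to that of the residual polynomial of $\phi_{n+1}$.

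Your fallback through $K^h$ does not close the gap. Unibranchedness gives only $\phi_r=\phi_r^h$, not $\phi_n=\phi_n^h$ for $n<r$. So ``the degrees do not drop'' is unjustified at intermediate nodes. Over $K^h$, Lemma~\ref{nl2} identifies the defect with $\deg\phi_{n+1}^h/\deg\phi_n^h$, which equals $d_n$ only if the degrees do not drop. In the paper this downward implication is Theorem~\ref{lst}, whose proof uses (\ref{ev}), i.e.\ the statement itself, so quoting it would be circular. Moreover, this route never touches $e$ and $f$, and it needs Theorem~\ref{T2} to reach $d(v_L|v)$.

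The fallback can be made to work by downward induction:
at an ordinary step, use Lemma~\ref{l1}(i);
at a limit step with $\phi_{n+1}=\phi_{n+1}^h$ and $\chi=\chi_i$, Lemma~\ref{df2} and Corollary~\ref{2.5} identify $\Phi(\mu_n^h,\mu_{n+1}^h)=[\chi^h]_{\mu_n^h}$. Proposition~\ref{dh} then gives $d(\mu_n\to\mu_{n+1})=\deg\phi_{n+1}/\deg\chi^h\geq\deg\phi_{n+1}/\deg\chi=d_n$. Since (\ref{equa1}) gives the reverse inequality, $\chi=\chi^h$, and Lemma~\ref{l1}(i) yields $\phi_n=\phi_n^h$.
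Theorems~\ref{mt2} and~\ref{T2} then finish. That, however, is a derivation from the Nart--Novacoski theorem, not a completion of your $ef$-count.
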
 	Later in 2023, Nart and Novacoski gave another characterization for the defect of a simple algebraic extension, which generalizes the above for the Henselian case (see Lemma \ref{nl2}).
 			
 			\begin{theorem}(\cite[Theorem 6.14]{EN3})\label{T2}
 			Let $v_L$ be an extension of $v$ to a finite simple extension $L|K.$ Let $\mu$ be the valuation on $K[x]$ induced by $v_L.$ For any MLV chain for $\mu=\mu_r$ (\ref{m1}), we have\[d(v_L|v)=d(\mu|v)=d(\mu_0\longrightarrow \mu_1)\cdots d(\mu_{r-1}\longrightarrow \mu_r).\]
 			\end{theorem}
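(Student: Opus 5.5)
The plan is to reduce everything to the Henselian case and then invoke Vaqui\'e's Theorem \ref{vth1} over $(K^h,v^h)$. Let $\phi$ be the generator of $\text{supp}(\mu)$, so $L=K[x]/(\phi)$. By Remark \ref{r21}, $v_L$ corresponds to a unique irreducible factor $G$ of $\phi$ over $K^h$, and $\mu^h$ is the valuation on $K^h[x]$ with support $GK^h[x]$ induced by the unique extension of $v^h$ to $L^h=K^h[x]/(G)$.

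\textbf{Step 1: the defect is a Henselian invariant.} Since $e$ and $f$ do not change under Henselization, we have
\[d(v_L|v)=\frac{[L^h:K^h]}{ef}=d(L^h|K^h).\]
The extension $L^h|K^h$ is unibranched, since $K^h$ is Henselian. Theorem \ref{vth1}, applied to $\mu^h$, then expresses $d(v_L|v)$ as the product of the relative gaps of any MLV chain of $\mu^h$.

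\textbf{Step 2: compare with the product of defects.} Take the MLV chain (\ref{m1}) of $\mu$ and pass to the chain $\mu_0^h\to\mu_1^h\to\cdots\to\mu_r^h=\mu^h$. By Lemmas \ref{df2} and \ref{df3} each step is again an ordinary or a limit augmentation over $K^h$. By Nart--Novacoski's invariance theorem together with Proposition \ref{dh},
\[d(\mu_n\to\mu_{n+1})=\frac{\deg(\mu_{n+1}^h)}{\deg\Phi(\mu_n^h,\mu_{n+1}^h)}.\]
\begin{itemize}
\item If $\mu_n\to\mu_{n+1}$ is ordinary, then $\phi_{n+1}^h\in\Phi(\mu_n^h,\mu_{n+1}^h)$. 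Hence the ratio is $1$, consistent with Lemma \ref{nl1}.
\item If the step is limit, Remark \ref{rem2} gives $\deg\Phi(\mu_n^h,\mu_{n+1}^h)=\deg(\mathcal{C}^h)=\deg(\mu_n^h)$. So the ratio is $\deg(\mu_{n+1}^h)/\deg(\mu_n^h)$.
\end{itemize}
It then remains to show that the product of these ratios equals the product of relative gaps of a genuine MLV chain of $\mu^h$.

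\textbf{Step 3 (main obstacle): make the Henselized chain an MLV chain.} The chain $(\mu_n^h)$ need not be an MLV chain. An ordinary step may satisfy $\deg(\mu_n^h)=\deg(\mu_{n+1}^h)$, because the depth can drop. A limit step might also become inessential or have degree jumps distributed differently. I would prune the chain as follows.
\begin{itemize}
\item Whenever consecutive steps over $K^h$ do not increase the degree of the tangent direction, replace them by a single augmentation from the earlier node to the later one. Corollary \ref{2.5} shows that the tangent direction is unchanged by this merging.
\item Check, using the MLV uniqueness/construction of \cite[Theorem 4.3]{EN1} over $K^h$, that the pruned chain is an MLV chain of $\mu^h$.
\item Verify that each of its relative gaps is the product of the corresponding ratios of Step 2 for the merged steps. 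The defect of a composite augmentation, computed via Proposition \ref{dh} over a Henselian field, equals $\deg(\text{end})/\deg(\Phi)$, and $\Phi$ of the composite coincides with $\Phi$ of its first step. So the ratios multiply correctly provided the intermediate degrees cancel. This cancellation is exactly what the degree bookkeeping above (ordinary ratio $1$, limit ratio a degree quotient) is designed to ensure.
\end{itemize}
I expect this pruning and bookkeeping, especially around limit steps whose Henselized families merge with a preceding ordinary step, to be the delicate part.

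Combining Steps 1--3 gives $d(v_L|v)=\prod_n d(\mu_n\to\mu_{n+1})$. By Theorem \ref{td1}, this product is independent of the chosen MLV chain, and by definition it equals $d(\mu|v)$.
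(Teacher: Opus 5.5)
The paper does not prove this statement; it quotes it from \cite{EN3}. Judged on its own, your reduction is fine through Step~2. Henselization is immediate, so $d(v_L|v)=d(L^h|K^h)$. Also $\mu^h$ is the valuation attached to the unique extension of $v^h$ to $L^h$, so Theorem~\ref{vth1} applies over $K^h$, and Proposition~\ref{dh} gives your ratios. But the whole content of the theorem then lies in the identity of Step~3, which you only announce. Your diagnosis of what has to be pruned is also wrong. An ordinary MLV step never loses its degree jump: $\deg\psi/\deg\psi^h$ is the same for all $\psi\in\operatorname{KP}(\mu_n)$. This is the fact used in Remark~\ref{rem2}, and it also follows from $\deg\psi=\deg_{\mu_n}(\psi)\deg(\mu_n)$ together with Proposition~\ref{df1}. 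Hence $\deg\phi_{n+1}^h/\deg\phi_n^h=\deg\phi_{n+1}/\deg\phi_n>1$. The only degenerate steps are limit steps with $d(\mu_n\to\mu_{n+1})=1$. For these, $\deg(\mu_{n+1}^h)=\deg(\mu_n^h)$, $\mathcal{C}^h$ is in-essential and $\phi_{n+1}^h\in\Phi(\mu_n^h,\mu_{n+1}^h)$. So $\mu_{n+1}^h=[\mu_n^h;\phi_{n+1}^h,\gamma_{n+1}^h]$ is an ordinary augmentation with no degree increase.

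Such a node $\mu_n^h$ must be removed by absorbing it into the \emph{preceding} step, and your rule (merge consecutive steps whose tangent directions have equal degree) does not always do this. Suppose $\mu_{n-1}^h\to\mu_n^h$ is an essential limit step and $\mu_{n+1}^h\to\mu_{n+2}^h$ is ordinary. Then the tangent degrees $\deg(\mu_{n-1}^h)<\deg(\mu_n^h)<\deg(\mu_{n+2}^h)$ strictly increase, the rule never fires, and the output is not an MLV chain. The same happens for $n=0$, where $\mu_1^h$ has degree $1$ and must become the new depth-zero node. What is actually needed is the following. If the preceding step is ordinary, then $\mu_{n+1}^h=[\mu_{n-1}^h;\phi_{n+1}^h,\gamma_{n+1}^h]$. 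If it is a limit step with family $\mathcal{D}$, then $\phi_{n+1}^h\in\Phi(\nu_{\mathcal{D}},\mu_{n+1}^h)\subseteq\operatorname{KP}(\nu_{\mathcal{D}})=\operatorname{KP}_\infty(\mathcal{D})$, so $\mu_{n+1}^h=[\mathcal{D};\phi_{n+1}^h,\gamma_{n+1}^h]$ by Lemma~\ref{vt1}. In both cases the deleted step has ratio $1$ and $\deg(\mu_n^h)=\deg(\mu_{n+1}^h)$, so the product is unchanged. You must also check the MLV condition $\phi_k^h\notin\Phi(\mu_k^h,\mu_{k+1}^h)$ at each surviving limit step, which your sketch never mentions. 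It can be deduced from $\phi_k\notin\Phi(\mu_k,\mu_{k+1})$ by showing that $\mu_k^h$ and $\mu_{k+1}^h$ agree on the cofactor $\phi_k/\phi_k^h$, whose roots are the non-optimizing roots of $\phi_k$ (Theorem~\ref{t21}). With these pieces the pruned chain is an MLV chain of $\mu^h$. Its relative gaps are the defects $d(\mu_n\to\mu_{n+1})$ of the surviving steps, and the removed steps have defect $1$, which is exactly what Step~3 requires. As submitted, that decisive step is missing.
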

 		 Recently, in 2026, this result was extended to valuation-transcendental extensions (see \cite[Theorem 4.10]{NA}). Therefore, in either case, we have $d(\mu|v)=d(v_L|v),$ the Henselian defect. Hence, using the above results, it follows that if $K(\theta)|K$ is unibranched, then  
 			\begin{align}\label{ev}
 				d(K(\theta)|K)=\prod_{n=0}^{r-1} d(\mu_n\longrightarrow \mu_{n+1})=\prod_{n=0}^{r-1}\frac{\deg(\mu_{n+1})}{\deg(\Phi(\mu_n,\mu_{n+1})}.
 				\end{align}

 	\subsection{Abstract Key Polynomials}
 		Abstract key polynomials were first introduced by Decaup et al.\  in \cite{DSM}, as an alternative definition of key polynomials.
 			 The relation between key polynomials and abstract key polynomials is explored in \cite{Ma, DSM,NS}. 
 			
 			\begin{definition}
 				A monic polynomial $Q$ in $K[x]$ is said to be an {\bf abstract key polynomial}  (abbreviated  ABKP) for $\mu$ if for every polynomial $f$ in $K[x]$  with $\deg f< \deg Q,$ we have $\delta(f)<\delta(Q).$
 			\end{definition}
 			It is immediate from the definition that all monic linear polynomials are ABKPs for $\mu.$  Also an ABKP for $\mu$ is irreducible. If $\mu$ has nontrivial support $\phi K[x],$ then $\phi$ is a highest degree ABKP for $\mu$ since $\delta(\phi)=\infty.$
 			Note that $Q$ is an ABKP for $\mu$ if and only if $(\alpha,\delta(Q))$ is a minimal pair for $\mu,$ for some optimizing root $\alpha$ of $Q$ (see \cite[Theorem 1.1]{JN}).
 			\begin{definition}
 				Let $\mu$ be a valuation on $K[x].$ Then
 				for a polynomial $Q$ in $K[x]$ the {\bf $Q$-truncation} of $\mu$ is a map $\mu_Q:K[x]\longrightarrow \Gamma_\mu$ defined by 
 				$$ \mu_Q(f):= \min_{i\geq 0}\{\mu(f_iQ^i)\},$$
 				where $\sum_{i\geq 0} f_i Q^i,$ $\deg f_i <\deg Q,$  is the $Q $-expansion of $f.$ 
 			\end{definition}
 			The $Q$-truncation  $\mu_Q$  of $\mu$ need not be a valuation \cite[Example 2.5]{NS}. However,  if $Q$ is an ABKP for $\mu,$ then $\mu_Q$ is a valuation on $K[x]$ (see \cite[Proposition 2.6]{NS}). If $Q\in \text{supp}(\mu),$ then $\mu_Q=\mu,$ otherwise, $\mu_Q$ is a valuation-transcendental extension and $Q$ is a minimal degree key polynomial for $\mu_Q$ (see \cite[Theorem 2.21]{Ma}). In either case, $Q$ is also an ABKP for $\mu_Q$ (\cite[Corollary 2.22]{Ma}).
 			\begin{definition}
 				A family $\Lambda=\{Q_i\}_{i\in\Delta}$ of ABKPs for $\mu,$ indexed by a well-ordered set $\Delta,$ is said to be a {\bf complete sequence of ABKPs} for $\mu$ if the following conditions hold:
 				\begin{enumerate}[(i)]
 					\item $\delta(Q_i)\neq \delta(Q_j)$ for every $i\neq j\in\Delta.$
 					\item $\Lambda$ is well-ordered with respect to the ordering given by $Q_i< Q_j$ if and only if  $\delta(Q_i)<\delta(Q_j)$ for every $i<j\in \Delta.$ 
 					\item For any $f\in K[x],$ there exists some $Q_i \in \Lambda$ such that $\deg Q_i\leq \deg f$ and  $\mu_{Q_i}(f)=\mu(f).$
 				\end{enumerate}
 			\end{definition}
 			
 			
 			\begin{remark}\label{rabkp}
 				Every valuation $\mu$ on $K[x]$ admits a complete sequence of ABKPs for $\mu$ (see \cite[Theorem 1.1]{NS}). As shown in \cite[Remark 4.6]{W}, there is a complete sequence  $\Lambda=\{Q_i\}_{i\in\Delta}$ of ABKPs  for  $\mu$ with the following properties:
 				\begin{enumerate}[(i)]
 					\item $\Delta=\bigcup_{j\in I}\Delta_j$ with $I=\{0,1,\ldots, N\}$ or $\mathbb{N}\cup\{0\},$ and for each $j\in I$ we have $\Delta_j=\{j\}\cup\vartheta_{j},$ where $\vartheta_j$ is an ordered set without a last element or is empty.
 					\item $Q_0$ is a monic polynomial of degree one.
 					\item For all $j\in I\setminus \{0\}$ and $i \in\vartheta_{j-1},$ we have $j-1<i<j.$ 
 					\item All  polynomials $Q_i$ with $i\in\Delta_j$ have the same degree and  have degree strictly  less than  the degree of the polynomials $Q_{i'}$ for every $i'\in\Delta_{j+1}.$
 					\item For each $i<i'\in\Delta$ we have $\mu(Q_i)<\mu(Q_{i'}).$
 				\end{enumerate}
 			\end{remark}
 			
 			\begin{definition}
 				Let $\Lambda=\{Q_i\}_{i\in\Delta}$ be a complete sequence of ABKPs for $\mu.$	For	an element $j\in I,$ we say that  $Q_{j+1}$ is a {\bf limit ABKP} for $\mu$ if $j+1$ has no immediate predecessor in $\Delta,$ i.e.,	$\vartheta_{j}\neq \emptyset.$\end{definition}

 \section{Regular MLV chains}
 
 In this section, we study in detail an MLV chain of a valuation $\mu,$ under the hypothesis that each defining key polynomial $\phi_n$ is irreducible over $K^h$, i.e., $\phi_n=\phi_n^h.$
 
 The first result of this section shows that under this hypothesis, the MLV chain of $\mu$ and its canonical extension $\mu^h$ to $K^h[x]$ have the same depth.
 \begin{theorem}
 	Let $\mu$ be any extension of $v$ to $K[x]$ with an MLV chain \begin{align*}
 		(v\xrightarrow{\phi_0,\gamma_0})\mu_0\xrightarrow{\phi_1,\gamma_1} \mu_1\xrightarrow{\phi_2,\gamma_2}\cdots \longrightarrow \mu_{n}\xrightarrow{\phi_{n+1},\gamma_{n+1}} \mu_{n+1}\longrightarrow\cdots\longrightarrow \mu. 
 	\end{align*}
 	If each $\phi_n$ is irreducible over $K^h,$ then \begin{align}\label{edf1}
 		(v^h\xrightarrow{\phi_0,\gamma_0})\mu_0^h\xrightarrow{\phi_1,\gamma_1} \mu_1^h\xrightarrow{\phi_2,\gamma_2}\cdots \longrightarrow \mu_{n}^h\xrightarrow{\phi_{n+1},\gamma_{n+1}} \mu_{n+1}^h\longrightarrow\cdots\longrightarrow \mu^h 
 	\end{align} is an MLV chain of $\mu^h.$
 	\end{theorem}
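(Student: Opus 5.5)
We argue step by step along the chain. Each augmentation $\mu_n\to\mu_{n+1}$ lifts to an augmentation $\mu_n^h\to\mu_{n+1}^h$ of the same kind, with the same key polynomial $\phi_{n+1}=\phi_{n+1}^h$. We then check the degree conditions that make the lifted chain an MLV chain. Finally, we check that the limit behaviour at the end (Types 2 and 3) is preserved.

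\emph{The depth zero step.} Since $\phi_0=x-\alpha_0$ with $\alpha_0\in K$, the valuation $\mu_0=w_{\alpha_0,\gamma_0}$ is the restriction of $\overline{w}_{\alpha_0,\gamma_0}$. That valuation also restricts to a common extension of $\mu_0$ and $v^h$ on $K^h[x]$. By the uniqueness of $\mu_0^h$ (\cite[Theorem A]{EN5}), $\mu_0^h$ is the depth zero valuation defined by $(\alpha_0,\gamma_0)$ over $K^h$.

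\emph{Ordinary steps.} If $\mu_{n+1}=[\mu_n;\phi_{n+1},\gamma_{n+1}]$, then Lemma \ref{df2} gives $\mu_{n+1}^h=[\mu_n^h;\phi_{n+1}^h,\gamma_{n+1}^h]$. Since $\phi_{n+1}=\phi_{n+1}^h$, we have $\mu^h(\phi_{n+1}/\phi_{n+1}^h)=0$, so $\gamma_{n+1}^h=\gamma_{n+1}$. For the degree condition we use Proposition \ref{df1}. The polynomial $\phi_n$ has minimal degree in $\operatorname{KP}(\mu_n)$ and equals $\phi_n^h$, so $\deg(\mu_n^h)=\deg\phi_n=\deg(\mu_n)$. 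Similarly $\deg(\mu_{n+1}^h)=\deg\phi_{n+1}$. Hence $\deg(\mu_n^h)<\deg(\mu_{n+1}^h)=\deg(\Phi(\mu_n^h,\mu_{n+1}^h))$.

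\emph{Limit steps.} If $\mu_{n+1}=[\mathcal{C};\phi_{n+1},\gamma_{n+1}]$ with $\mathcal{C}$ essential, Lemma \ref{df3} gives $\mu_{n+1}^h=[\mathcal{C}^h;\phi_{n+1},\gamma_{n+1}]$. By Remark \ref{rem2}, $\mathcal{C}^h$ is a continuous family of augmentations of $\mu_n^h$. Since $\deg(\mu_n)=\deg(\mathcal{C})$, the same remark gives $\deg(\mathcal{C}^h)=\deg(\mu_n^h)=\deg(\mu_n)<\deg\phi_{n+1}=m_\infty(\mathcal{C}^h)$. So $\mathcal{C}^h$ is essential, and $\deg(\mu_n^h)<\deg(\mu_{n+1}^h)$. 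The tangent direction satisfies $\deg\Phi(\mu_n^h,\mu_{n+1}^h)=\deg(\mathcal{C}^h)=\deg(\mu_n^h)$.

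\emph{The main obstacle: $\phi_n\notin\Phi(\mu_n^h,\mu_{n+1}^h)$.} I would argue by contradiction. If $\phi_n\in\Phi(\mu_n^h,\mu_{n+1}^h)$, then $\mu_n^h(\phi_n)<\mu_{n+1}^h(\phi_n)$. Since $\phi_n\in K[x]$, this reads $\mu_n(\phi_n)<\mu_{n+1}(\phi_n)$. Its degree equals $\deg(\Phi(\mu_n,\mu_{n+1}))=\deg(\mu_n)$, so $\phi_n\in\Phi(\mu_n,\mu_{n+1})$, contradicting the hypothesis on the original chain. For the same reason, whenever $f\in K[x]$ every value comparison transfers directly. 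The delicate point is rather that $\Phi$ over $K^h$ consists of polynomials in $K^h[x]$. One needs $\deg\Phi(\mu_n^h,\mu_{n+1}^h)=\deg(\mu_n^h)$, and I would derive it from the essential family $\mathcal{C}^h$ via Remark \ref{lmr} and Corollary \ref{2.5}.

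\emph{The tail.} For Type 2, $\mu$ is the stable limit of a family $\mathcal{C}$ of augmentations of $\mu_r$. Lemma \ref{ml1} shows that $\mu^h$ is the stable limit of $\mathcal{C}^h$. The conditions $\deg(\Phi(\mu_r^h,\mu^h))=\deg(\mu_r^h)$ and $\phi_r\notin\Phi(\mu_r^h,\mu^h)$ follow by the same argument as above, using Remark \ref{rem2}.

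For Type 3, $\mu$ is the stable limit of the chain $(\mu_n)$, which is itself a continuous family. Lemma \ref{ml1} again yields that $\mu^h$ is the stable limit of $(\mu_n^h)$.
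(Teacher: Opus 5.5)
Your proof is correct and follows the paper's skeleton. Ordinary steps go through Lemma \ref{df2} with $\gamma_{n+1}^h=\gamma_{n+1}$. Limit steps go through Lemma \ref{df3} and Remark \ref{rem2}, including essentiality of $\mathcal{C}^h$. The Type 2 tail goes through Lemma \ref{ml1}.

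The one genuinely different step is the proof that $\phi_n\notin\Phi(\mu_n^h,\mu_{n+1}^h)$. The paper writes $\Phi(\mu_n^h,\mu_{n+1}^h)=[\chi_i^h]_{\mu_n^h}$, uses Lemma \ref{l1}(i) to get $\chi_i=\chi_i^h$, and then transports the non-equivalence $\phi_n\nsim_{\mu_n}\chi_i$ to $K^h[x]$. You instead observe that membership in a tangent direction forces $\mu_n^h(\phi_n)<\mu_{n+1}^h(\phi_n)$. This restricts to $\mu_n(\phi_n)<\mu_{n+1}(\phi_n)$, and since $\phi_n$ is monic of degree $\deg(\Phi(\mu_n,\mu_{n+1}))$, it would put $\phi_n$ in $\Phi(\mu_n,\mu_{n+1})$. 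That argument is valid and more economical: it needs neither Lemma \ref{l1} nor the description of $\Phi$ as a $\mu_n^h$-equivalence class, only that $\phi_n=\phi_n^h$ lies in $K[x]$.

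For the degree equality you call delicate, Corollary \ref{2.5}(ii) alone suffices. Apply it to $\mu_n^h<\rho_i^h=[\mu_n^h;\chi_i^h,\beta_i^h]<\mu_{n+1}^h$ to get $\Phi(\mu_n^h,\mu_{n+1}^h)=\Phi(\mu_n^h,\rho_i^h)\ni\chi_i^h$. Hence $\deg(\Phi(\mu_n^h,\mu_{n+1}^h))=\deg\chi_i^h=\deg(\mathcal{C}^h)$, and Remark \ref{lmr} is not needed.

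One statement needs correcting. In the Type 3 case, the chain $(\mu_n)_n$ is \emph{not} a continuous family in the paper's sense. Degrees strictly increase along an MLV chain, so the stable-degree condition fails and Lemma \ref{ml1} does not apply as stated. Its proof, however, never uses stable degree. It only needs:
\begin{enumerate}[(i)]
\item a totally ordered family without last element ($(\mu_n^h)_n$ is again one, since each lifted step is an augmentation);
\item the strict increase of values on unstable polynomials, from Corollary \ref{2.5}(ii);
\item the uniqueness of the common extension $\mu^h$.
\end{enumerate}
Rerunning that proof shows that $\mu^h$ is the stable limit of $(\mu_n^h)_n$. With this fix, your Type 3 treatment is more explicit than the paper's, which simply asserts that $\mu^h$ is again of Type 3.
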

 	\begin{proof}
 	  As $\mu_0$ is a depth zero valuation, its canonical extension $\mu_0^h$ to $K^h[x]$ is also a depth zero valuation defined over $v^h$ by $\phi_0$ and $\gamma_0$. For each $n,$ since $\phi_n$ is a minimal degree key polynomial for $\mu_n,$ Proposition \ref{df1} implies that $\phi_n^h$ is also a minimal degree key polynomial for the canonical extension $\mu_n^h$ of $\mu_n$ and $v^h$ to $K^h[x].$ Therefore, $\deg(\mu_n^h)=\deg\phi_n^h.$
 	  
 	   For an arbitrary but fixed $n$, $\mu_n\longrightarrow \mu_{n+1}$ is an MLV step defined by $\phi_{n+1}$ and $\gamma_{n+1}.$ We now consider the following two cases:
 	   
 	  \smallskip
 		\noindent{\bf Case I:} The augmentation $\mu_n\longrightarrow \mu_{n+1}$ is ordinary. Then, by Lemma \ref{df2}, we have \[\mu_{n+1}^h=[\mu_n^h;\phi_{n+1}^h,\gamma_{n+1}^h],\ \text{where}\ \gamma_{n+1}^h=\gamma_{n+1}-\mu_n^h(\phi_{n+1}/\phi_{n+1}^h).\] By hypothesis, $\phi_{n+1}=\phi_{n+1}^h,$ which implies that $\gamma_{n+1}^h=\gamma_{n+1}$ and that $\mu_n^h\longrightarrow \mu_{n+1}^h$ is also an ordinary augmentation defined by $\phi_{n+1}$ and $\gamma_{n+1},$ with $\phi_{n+1}^h\in \Phi(\mu_n^h,\mu_{n+1}^h).$ 
 		
 		Since $\mu_n\longrightarrow \mu_{n+1}$ is an MLV step, \[\deg(\mu_n)<\deg(\Phi(\mu_n,\mu_{n+1}))=\deg(\phi_{n+1}).\] As $\deg(\mu_n)=\deg\phi_n$,  $\deg(\mu_n^h)=\deg\phi_n^h,$ $\phi_n=\phi_n^h$ and $\phi_{n+1}=\phi_{n+1}^h$, we have \[\deg(\mu_n^h)<\deg(\Phi(\mu_n^h,\mu_{n+1}^h)),\] which implies that $\mu_n^h\xrightarrow{\phi_{n+1},\gamma_{n+1}} \mu_{n+1}^h$ is also an MLV step.
 		
 		\smallskip
 		\noindent{\bf Case II:}  The augmentation $\mu_n\longrightarrow \mu_{n+1}$ is limit. Then, there exists an essential continuous family $\mathcal{C}  =(\rho_i)_{i\in\mathbf{A}}$ of augmentations of $\mu_n$ such that $\mu_{n+1}=[\mathcal{C}; \phi_{n+1},\gamma_{n+1}],$ where $\phi_{n+1}$ is a limit key polynomial for $\mathcal{C}.$ By Lemma \ref{df3}, we have \[\mu_{n+1}^h=[\mathcal{C}^h; \phi_{n+1}^h,\gamma_{n+1}^h],\ \text{where}\ \gamma_{n+1}^h=\gamma_{n+1}-(\rho_{\mathcal{C}})^h(\phi_{n+1}/\phi_{n+1}^h),\] $\mathcal{C}^h=(\rho_i^h)_{i\in \mathbf{A}}$ is a continuous family of augmentations of $\mu_n^h$ and $\phi_{n+1}^h$ is a limit key polynomial for $\mathcal{C}^h.$ Since $\phi_{n+1}=\phi_{n+1}^h,$ $\gamma_{n+1}^h=\gamma_{n+1},$ and $\mu_n^h\longrightarrow \mu_{n+1}^h$ is also a limit augmentation defined by $\phi_{n+1}$ and $\gamma_{n+1}.$
 		
 		 Since $\mu_n\longrightarrow \mu_{n+1}$ is an MLV step, \[\deg(\mu_n)=\deg(\Phi(\mu_n,\mu_{n+1}))<\deg(\mu_{n+1})\  \text{and}\  \phi_n\nin \Phi(\mu_n,\mu_{n+1}).\] As the family $\mathcal{C}$ is essential, $\deg(\mathcal{C})=\deg(\Phi(\mu_n,\mu_{n+1})).$ By Remark \ref{rem2}, $\deg(\mu_n)=\deg(\mathcal{C})$ implies that $\deg(\mu_n^h)=\deg(\mathcal{C}^h)=\deg(\Phi(\mu_n^h,\mu_{n+1}^h)).$ Now, since $\phi_n=\phi_n^h$ and $\phi_{n+1}=\phi_{n+1}^h$, we have $\deg(\mathcal{C}^h)=\deg(\mathcal{C})<\deg\phi_{n+1}=\deg\phi_{n+1}^h=\deg(\mu_{n+1}^h)$, which further implies that $\mathcal{C}^h$ is also essential and \[\deg(\mu_n^h)=\deg(\Phi(\mu_n^h,\mu_{n+1}^h))<\deg(\mu_{n+1}^h).\] Let $\rho_i=[\mu_n;\chi_i,\beta_i].$ Then, for $\mathcal{C}$ and $\mathcal{C}^h$ respectively, we have $\Phi(\mu_n,\mu_{n+1})=[\chi_i]_{\mu_n}$ and $\Phi(\mu_n^h,\mu_{n+1}^h)=[\chi_i^h]_{\mu_n^h}$. As $\phi_n\nin \Phi(\mu_n,\mu_{n+1}),$ \[\phi_n\nsim_{\mu_n}\chi_i,\ \text{i.e.,}\  \mu_n(\phi_n-\chi_i)\leq \mu_n(\phi_n).\] Since each $\chi_i$ is a key polynomial for $\mu_n,$ $\phi_n=\phi_n^h$ implies that $\chi_i=\chi_i^h$ (see Lemma \ref{l1} (i)). Therefore, we have \[\mu_n^h(\phi_n^h-\chi_i^h)\leq \mu_n^h(\phi_n^h),\ \text{i.e.,}\  \phi_n^h\nsim_{\mu_n^h}\chi_i^h,\] and consequently, $\phi_n^h\nin \Phi(\mu_n^h,\mu_{n+1}^h).$ Hence, $\mu_n^h\xrightarrow{\phi_{n+1},\gamma_{n+1}} \mu_{n+1}^h$ is also an MLV step.
 		 \smallskip
 		
 		From the above two cases, (\ref{edf1}) is a chain of mixed augmentations for $\mu^h$, where each augmentation step $\mu_{n}^h\xrightarrow{\phi_{n+1},\gamma_{n+1}} \mu_{n+1}^h$ is an MLV step for every $n$. 
 		
 		If $\mu$ is of Type 1 or 3, then so is $\mu^h$.
 		
 		 Assume now that $\mu$ is of Type 2. In this case, $\mu$ is a stable limit of a continuous family  $\mathcal{C}=(\rho_i)_{i\in\mathbf{A}}$ of augmentations of a valuation $\mu_r$, i.e., $\mu=\rho_{\mathcal{C}}.$ Then, by Lemma \ref{ml1}, $\mu^h$ is also a stable limit of the continuous family $\mathcal{C}^h$ of augmentations of $\mu_r^h$, and $\mu^h=(\rho_\mathcal{C})^h=\rho_{\mathcal{C}^h}$. Since, \[\deg(\mu_r)=\deg(\mathcal{C})=\deg(\Phi(\mu_r,\mu))\  \text{and}\  \phi_r\nin\Phi(\mu_r,\mu),\] arguing as in Case II above, $\phi_r=\phi_r^h$ implies that \[\deg(\mu_r^h)=\deg(\mathcal{C}^h)=\deg(\Phi(\mu_r^h,\mu^h))\ \text{and}\ \phi_r^h\nin\Phi(\mu_r^h,\mu^h),\] it follows that $\mu^h$ is also of Type 2.
 	\end{proof}

In the next result, we characterize the hypothesis, viz., $\phi_n=\phi_n^h$, in terms of the defect and relative gap of each augmentation step of an MLV chain of $\mu.$
	\begin{theorem}\label{mt2}
		Let $\mu$ be any extension of $v$ to $K[x]$ with an MLV chain \begin{align*}
		(v\xrightarrow{\phi_0,\gamma_0})\mu_0\xrightarrow{\phi_1,\gamma_1} \mu_1\xrightarrow{\phi_2,\gamma_2}\cdots \longrightarrow \mu_{n}\xrightarrow{\phi_{n+1},\gamma_{n+1}} \mu_{n+1}\longrightarrow\cdots\longrightarrow \mu. 
	\end{align*}
	Then each $\phi_n$ is irreducible over $K^h$ if and only if the relative gap and defect of each augmentation step $\mu_n\longrightarrow \mu_{n+1}$ are equal.
	\end{theorem}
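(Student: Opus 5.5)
We compare, step by step, the relative gap $d_n=\deg(\mu_{n+1})/\deg(\Phi(\mu_n,\mu_{n+1}))$ with the defect, which by Proposition \ref{dh} equals $\deg(\mu_{n+1}^h)/\deg(\Phi(\mu_n^h,\mu_{n+1}^h))$. Ordinary steps contribute nothing to the equivalence: there both the defect (Lemma \ref{nl1}) and the relative gap equal $1$. So the content lies entirely in the limit steps and in how the degrees over $K$ and $K^h$ relate.

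\emph{The forward direction.} Assume $\phi_n=\phi_n^h$ for all $n$. By the preceding theorem, the chain (\ref{edf1}) is an MLV chain of $\mu^h$ with the same key polynomials. Hence $\deg(\mu_{n+1}^h)=\deg\phi_{n+1}=\deg(\mu_{n+1})$. For a limit step, Remark \ref{rem2} gives $\deg(\Phi(\mu_n^h,\mu_{n+1}^h))=\deg(\mathcal{C}^h)=\deg(\mathcal{C})=\deg(\Phi(\mu_n,\mu_{n+1}))$, using $\chi_i=\chi_i^h$ from Lemma \ref{l1}(i). Substituting into Proposition \ref{dh} yields defect $=$ relative gap.

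\emph{The converse.} Assume defect $=$ relative gap at every step, and prove $\phi_n=\phi_n^h$ by induction on $n$. The base case holds since $\phi_0$ is linear. Suppose $\phi_n=\phi_n^h$. Then $\deg(\mu_n^h)=\deg(\mu_n)$, and by Lemma \ref{l1}(i) every key polynomial of $\mu_n$ is irreducible over $K^h$.

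If the step is ordinary, $\phi_{n+1}\in\operatorname{KP}(\mu_n)$, so $\phi_{n+1}=\phi_{n+1}^h$ directly.

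If the step is limit, the $\chi_i\in\operatorname{KP}(\mu_n)$ satisfy $\chi_i=\chi_i^h$. Remark \ref{rem2} then gives $\deg(\Phi(\mu_n^h,\mu_{n+1}^h))=\deg\chi_i^h=\deg\chi_i=\deg(\Phi(\mu_n,\mu_{n+1}))$. Equality of defect and relative gap, combined with Proposition \ref{dh}, forces $\deg(\mu_{n+1}^h)=\deg(\mu_{n+1})$. Finally, Lemma \ref{df3} (or Proposition \ref{df1} when $\gamma_{n+1}<\infty$, via the minimal-degree key polynomial) shows that $\phi_{n+1}^h$ is a factor of $\phi_{n+1}$ with $\deg\phi_{n+1}^h=\deg(\mu_{n+1}^h)=\deg\phi_{n+1}$. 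Hence $\phi_{n+1}=\phi_{n+1}^h$.

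The main obstacle is justifying $\deg(\mu_{n+1}^h)=\deg\phi_{n+1}^h$ in the limit case, including infinite $\gamma_{n+1}$, where the support of $\mu_{n+1}^h$ is generated by $\phi_{n+1}^h$. I would handle it uniformly through Lemma \ref{df3}, which states $\mu_{n+1}^h=[\mathcal{C}^h;\phi_{n+1}^h,\gamma_{n+1}^h]$, together with the fact noted earlier that a limit augmentation has degree equal to that of its limit key polynomial. For Type 2 valuations, the final stable-limit step is not an augmentation step of the chain, so no extra condition arises there.
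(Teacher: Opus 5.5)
Your proposal is correct and follows essentially the same route as the paper: you use Proposition \ref{dh} to express the defect through $K^h$-degrees, Lemma \ref{l1}(i) for ordinary steps, and an induction from the linear $\phi_0$ for the converse. The only cosmetic difference is the bookkeeping in the limit case: the paper tracks the ratio $\deg\phi_n/\deg\phi_n^h$ via $\deg(\mu_{n+1})/\deg(\mu_n)=\deg(\mu_{n+1}^h)/\deg(\mu_n^h)$, whereas you match the tangent-direction degrees directly using $\chi_i=\chi_i^h$.
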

	\begin{proof}
		We first assume that for each $n$, $\phi_n$ is irreducible over $K^h,$ i.e., $\phi_n=\phi_n^h.$ Consider the augmentation step $\mu_n\longrightarrow \mu_{n+1}.$ The relative gap of this step is  \[d_{n}=\frac{\deg(\mu_{n+1})}{\deg(\Phi(\mu_n,\mu_{n+1}))}= \begin{cases} 1 & \text{if} \ \mu_{n}\longrightarrow \mu_{n+1}\ \text{is ordinary }\\ \deg(\mu_{n+1})/\deg(\mu_{n})\ &  \text{if} \ \mu_{n}\longrightarrow \mu_{n+1} \ \text{is limit }\end{cases}, \] and the defect is  \[d(\mu_n\longrightarrow \mu_{n+1})= \begin{cases} 1 & \text{if} \ \mu_{n}\longrightarrow \mu_{n+1}\ \text{is ordinary }\\ \deg(\mu_{n+1}^h)/\deg(\mu_{n}^h)\ &  \text{if} \ \mu_{n}\longrightarrow \mu_{n+1} \ \text{is limit }\end{cases} \] (see Proposition \ref{dh}). The conclusion is immediate in the ordinary case. 
		
		Assume now that $\mu_n\longrightarrow \mu_{n+1}$ is limit. Since $\phi_n$ is a minimal degree key polynomial for $\mu_n,$	$\phi_n^h$ is a minimal degree key polynomial for $\mu_n^h.$ As $\phi_n=\phi_n^h,$ we have $\deg(\mu_n)=\deg \phi_n=\deg \phi_n^h=\deg(\mu_n^h)$ for every $n.$ Hence, \begin{align}\label{ed}
		\frac{\deg(\mu_{n+1})}{\deg(\mu_n)}=\frac{\deg(\mu_{n+1}^h)}{\deg(\mu_n^h)},	
		\end{align} and the conclusion follows.
		
		Conversely, assume that for each $n$, the relative gap and defect of the augmentation $\mu_n\longrightarrow \mu_{n+1}$ are equal, i.e., (\ref{ed}) holds whenever $\mu_n\longrightarrow \mu_{n+1}$ is a limit augmentation. Since $\deg(\mu_n)=\deg \phi_n$ and $\deg(\mu_n^h)=\deg \phi_n^h$ for every $n,$ (\ref{ed}) implies that
		\begin{align}\label{edf}
			\frac{\deg(\phi_{n+1})}{\deg(\phi_n)}=\frac{\deg(\phi_{n+1}^h)}{\deg(\phi_n^h)}
			\end{align} holds for every limit augmentation.
		 As $\mu_0$ is a depth zero valuation, $\phi_0,$ being its minimal degree key polynomial, has degree one. Therefore, $\phi_0$ is irreducible over $K^h,$ i.e., $\phi_0=\phi_0^h.$ If $\mu_0\longrightarrow \mu_1$ is an ordinary augmentation, then $\phi_1$ is a key polynomial for $\mu_0.$ Therefore, by Lemma \ref{l1} (i), $\phi_0=\phi_0^h$ implies that $\phi_1=\phi_1^h.$ Suppose now that $\mu_0\longrightarrow \mu_1$ is a limit augmentation. Then by (\ref{edf}), for $n=0$, we have
		  \[\frac{\deg\phi_1}{\deg\phi_0}=\frac{\deg\phi_1^h}{\deg\phi_0^h},\
		\text{i.e.,}\ \frac{\deg\phi_1}{\deg\phi_1^h}=\frac{\deg\phi_0}{\deg\phi_0^h}. 
		\]
		Hence,  $\phi_0=\phi_0^h$ implies that $\phi_1=\phi_1^h.$ Proceeding recursively, $\phi_{n}=\phi_{n}^h$ follows from $\phi_{n-1}=\phi_{n-1}^h,$  for each $n\geq1.$ Hence, each $\phi_n$ is irreducible over $K^h.$
		\end{proof}
		
		Note that the above theorem is a generalization of Lemma \ref{nl2} to arbitrary valued fields, for the augmentation steps of an MLV chain of $\mu$.
		\begin{remark}\label{r2}
		With notation as in the above theorem, let $\mu_n\xrightarrow{\phi_{n+1}} \mu_{n+1}$ be an MLV augmentation step. Then, by the definition of defect, for each $Q\in\Phi(\mu_n,\mu_{n+1})$, we have
		\begin{align}\label{equa1}
			d(\mu_n\longrightarrow \mu_{n+1})\leq \deg_{\rho_Q}(\phi_{n+1})\leq \frac{\deg\phi_{n+1}}{\deg Q}=\frac{\deg(\mu_{n+1})}{\deg(\Phi(\mu_n,\mu_{n+1}))}.\end{align} If each $\phi_n$ is irreducible over $K^h,$ then by the above theorem, we have \[d(\mu_n\longrightarrow \mu_{n+1})=\frac{\deg(\mu_{n+1})}{\deg(\Phi(\mu_n,\mu_{n+1}))},\] which in view of (\ref{equa1}) implies that \[\deg_{\rho_Q}(\phi_{n+1})= \frac{\deg\phi_{n+1}}{\deg Q}=m \ (\text{say}).\] Therefore, 
		for each $Q\in\Phi(\mu_n,\mu_{n+1})$, the $Q$-expansion of $\phi_{n+1}$ is of the form \[\phi_{n+1}=q_m{Q}^m+\cdots+q_0,\]  where $q_m=1$ and $\rho_Q(\phi_{n+1})=\rho_Q({Q}^m).$
			\end{remark}
	
		 The  following result follows directly from Theorems \ref{t21} and \ref{t2}.
		\begin{proposition}\label{pc}
		Let $(v\xrightarrow{\phi_0})\mu_0\xrightarrow{\phi_1} \mu_1\xrightarrow{\phi_2}\cdots \longrightarrow \mu_{n}\xrightarrow{\phi_{n+1}} \mu_{n+1}\longrightarrow\cdots\longrightarrow \mu$ be an MLV chain of $\mu.$ Then, the following are equivalent:
		\begin{enumerate}[(i)]
			\item each $\phi_n$ is irreducible over $K^h$,
			\item for each root $\theta_n$ of $\phi_n$, we have \[\overline{w}_{\theta_n,\delta(\phi_n)}|_{K[x]}=\mu_n.\]
		\end{enumerate}
		\end{proposition}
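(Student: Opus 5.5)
The plan is to prove the equivalence one node at a time, using Theorems \ref{t21} and \ref{t2}, and then to pass between all $n$ and each $n$. Two facts hold for every $n$. First, $\mu_n$ is valuation-transcendental or has nontrivial support. Second, $\phi_n$ is either a minimal degree key polynomial for $\mu_n$ or generates $\text{supp}(\mu_n)$; the latter can only occur at the last node. I would treat the valuation-transcendental case in detail. The support case is handled directly: there $\delta(\phi_n)=\infty$, and $\overline{w}_{\theta,\infty}|_{K[x]}$ is the valuation $v_{G}$ of Remark \ref{r21}, where $G$ is the irreducible factor over $K^h$ containing $\theta$.

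For (i)$\Rightarrow$(ii), fix $n$ and assume $\phi_n=\phi_n^h$. Then $Z(\phi_n)=Z(\phi_n^h)$. By Theorem \ref{t21}, every root $\theta_n$ of $\phi_n$ is an optimizing root, so $\overline{\mu}_n(x-\theta_n)=\delta(\phi_n)$ for any common extension $\overline{\mu}_n$ of $\mu_n$ and $\bar v$. Theorem \ref{t2} then gives $\overline{\mu}_n=\overline{w}_{\alpha,\delta(\phi_n)}$ for some $\alpha\in Z(\phi_n^h)$. It may or may not be possible to take $\alpha=\theta_n$ directly. If not, I would invoke Theorem \ref{t2} with a different common extension: as $\overline{\mu}_n$ runs over all common extensions, the resulting $\alpha$ range over conjugates under $\operatorname{Aut}(\overline K|K^h)$, and these act transitively on $Z(\phi_n^h)$. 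Alternatively, I would use Lemma \ref{il1}, noting that $\bar v(\alpha-\theta_n)\geq\delta(\phi_n)$ follows from $\overline{w}_{\alpha,\delta}(x-\theta_n)=\delta(\phi_n)$ together with $\delta(\phi_n)>\dots$. In either case, restricting to $K[x]$ yields $\overline{w}_{\theta_n,\delta(\phi_n)}|_{K[x]}=\mu_n$. In the support case, (i) says that $\phi_n$ itself is the unique factor $G$, so every root gives $v_{G}=\mu_n$.

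For (ii)$\Rightarrow$(i), fix $n$ and assume that every root $\theta_n$ satisfies $\overline{w}_{\theta_n,\delta(\phi_n)}|_{K[x]}=\mu_n$. Then $\overline{w}_{\theta_n,\delta(\phi_n)}$ is a common extension of $\mu_n$ and $\bar v$, and it gives $x-\theta_n$ the value $\delta(\phi_n)$. By definition this makes $\theta_n$ an optimizing root of $\phi_n$, and the optimizing value does not depend on the common extension. Theorem \ref{t21} then places $\theta_n$ in $Z(\phi_n^h)$. Since this holds for all roots, $Z(\phi_n)\subseteq Z(\phi_n^h)$. If $\phi_n$ is inseparable, I would also compare multiplicities, using that $\phi_n^h$ divides $\phi_n$ and that $\phi_n$ is irreducible over $K$, so all its roots have equal multiplicity. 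Together with monicity, this forces $\phi_n=\phi_n^h$.

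In the support case, (ii) says that every factor $G_i$ induces $\mu_n$. Since Remark \ref{r21} makes the $v_{G_i}$ pairwise distinct, there can be only one factor.

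The step I expect to be the main obstacle is in (i)$\Rightarrow$(ii): turning the existence of some $\alpha\in Z(\phi_n^h)$, which is all Theorem \ref{t2} provides, into the statement for every root $\theta_n$. I would first try the conjugation argument, which relies on the invariance $\bar v\circ\sigma=\bar v$ for $\sigma\in\operatorname{Aut}(\overline K|K^h)$ noted in the preliminaries. If that fails, I would fall back on the pair-of-definition criterion, Lemma \ref{il1}.
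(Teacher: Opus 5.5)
Your proposal is correct and follows the paper's route: the paper only asserts that the equivalence follows from Theorems \ref{t21} and \ref{t2}, and you fill in the details it leaves implicit. These are the passage from the single $\alpha\in Z(\phi_n^h)$ supplied by Theorem \ref{t2} to every root via Lemma \ref{il1}, and the node with nontrivial support, which those two theorems do not cover, via Remark \ref{r21}; in the Lemma \ref{il1} step no extra inequality is needed, since $\overline{w}_{\alpha,\delta}(x-\theta_n)=\min\{\delta,\bar v(\alpha-\theta_n)\}$ with $\delta=\delta(\phi_n)$. One tightening in (ii)$\Rightarrow$(i): $Z(\phi_n)\subseteq Z(\phi_n^h)$ only gives $\phi_n=(\phi_n^h)^k$, and equal root multiplicities do not exclude $k>1$, so argue instead that $\sigma(\phi_n^h)^k=\phi_n=(\phi_n^h)^k$ forces $\sigma(\phi_n^h)=\phi_n^h$ for all $\sigma\in\operatorname{Gal}(K^{sep}|K)$, hence $\phi_n^h\in K[x]$ and irreducibility of $\phi_n$ over $K$ gives $k=1$.
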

		
		\begin{remark}\label{rc}
		Note that if $\mu_n\longrightarrow \mu_{n+1}$ is a limit augmentation with a continuous family $\mathcal{C}=(\rho_i)_{i\in\mathbf{A}}$, where $\rho_i=[\mu_n;\chi_i,\beta_i],$ then $\Phi(\mu_n,\mu_{n+1})=[\chi_i]_{\mu_n}$. Since the $\chi_i$'s are key polynomials for $\mu_n,$ $\phi_n=\phi_n^h$, it follows that $\chi_i=\chi_i^h$ (see Lemma \ref{l1} (i)). Therefore, by Theorem \ref{t2}, for each root $a$ of $\chi_i$, \[\overline{w}_{a,\delta(\chi_i)}|_{K[x]}=\rho_i,\ \forall \ i\in\mathbf{A}.\]	
		\end{remark} 
		
	We now establish an equivalence between the hypothesis on an MLV chain of $\mu$ and the regularity of a complete sequence of ABKPs for $\mu$. The notion of regularity was introduced by Dutta and Mahboub for a complete sequence of ABKPs in \cite{AM}  as follows.

			\begin{definition}
			Let $\Lambda=\{Q_i\}_{i\in\Delta}$ be a complete sequence of ABKPs for $\mu$ as in Remark \ref{rabkp}. A limit ABKP $Q_{j+1}$ of $\Lambda$, for $j\in I$, is said to be {\bf regular} if there exists $i_0\in\vartheta_{j}$ such that for all $i\geq i_0$, the $Q_i$-expansion of $Q_{j+1}$ has the form
			\[Q_{j+1}=Q_i^{m}+q_{m-1}Q_i^{m-1}+\cdots+q_1Q_1+q_0,\] and
			\[\mu_{Q_i}(Q_{j+1})=m\mu(Q_i)\leq \mu(q_t)+t\mu(Q_i),\ \text{for all}\ t<m.\] If every limit ABKP of $\Lambda$ is regular, then we say that $\Lambda$ is regular.
		\end{definition}
		
		The following result characterizes a regular complete sequence of ABKPs in terms of common extensions of their respective truncated valuations.
		\begin{theorem}(\cite[Theorem 1.3]{AM})\label{am}
			Let $\mu$ be an extension of $v$ to $K[x]$and $\Lambda=\{Q_i\}_{i\in\Delta}$ a complete sequence of ABKPs  for $\mu$. Then the following are equivalent:
			\begin{enumerate}[(i)]
				\item $\Lambda=\{Q_i\}_{i\in\Delta}$ is regular,
				\item for each $i\in\Delta$, and each root $a$ of $Q_i$, we have \[\overline{w}_{a,\delta(Q_i)}|_{K[x]}=\mu_{Q_i}.\]
			\end{enumerate}
		\end{theorem}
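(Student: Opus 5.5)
The plan is to pass through the intermediate condition
\begin{center}
(iii) every $Q_i\in\Lambda$ is irreducible over $K^h$, i.e.\ $Q_i=Q_i^h$,
\end{center}
and prove (ii)$\iff$(iii) and (i)$\iff$(iii) separately. Both halves rely on the dictionary between a complete sequence $\Lambda$ as in Remark \ref{rabkp} and an MLV chain of $\mu$. Concretely, $\mu_{Q_0}\to\mu_{Q_1}\to\cdots$ plays the role of the nodes $\mu_n$. For $i\in\vartheta_j$, the truncations $\mu_{Q_i}$ should be the ordinary augmentations $[\mu_{Q_j};Q_i,\mu(Q_i)]$, which form an essential continuous family of augmentations of $\mu_{Q_j}$. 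A limit ABKP $Q_{j+1}$ is then a limit key polynomial for that family. I would take this dictionary from the literature relating ABKPs and MLV chains (\cite{Ma,NS,W}), checking in particular that $\mu_{Q_i}=\rho_{Q_i}$ in the notation of Lemma \ref{def}.

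\textbf{(ii)$\iff$(iii).} Fix $i$ with $Q_i\notin\operatorname{supp}(\mu)$. Then $\mu_{Q_i}$ is valuation-transcendental and $Q_i$ is a minimal degree key polynomial for it; the support case is handled directly by Remark \ref{r21}. By Theorem \ref{t2}, every common extension of $\mu_{Q_i}$ has the form $\overline{w}_{\alpha,\delta}$ with $\alpha\in Z(Q_i^h)$ and $\delta=\delta(Q_i)$. Here I must check that the optimizing value of $Q_i$ computed with $\mu_{Q_i}$ agrees with the one computed with $\mu$; this should follow from $Q_i$ being an ABKP for both (\cite[Corollary 2.22]{Ma}).

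If $a$ is any root of $Q_i$, then $\overline{w}_{a,\delta(Q_i)}$ restricts to $\mu_{Q_i}$ exactly when $a$ is an optimizing root of $Q_i$ for $\mu_{Q_i}$. By Lemma \ref{il1} and Theorem \ref{t21}, this happens exactly when $a\in Z(Q_i^h)$. So (ii) at index $i$ holds iff $Z(Q_i)=Z(Q_i^h)$, i.e.\ iff $Q_i=Q_i^h$, which is exactly the argument behind Proposition \ref{pc}.

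\textbf{(i)$\iff$(iii).} I would argue by transfinite induction along $\Delta$, as in the converse part of Theorem \ref{mt2}. The base case is $Q_0$, which is linear and hence irreducible over $K^h$. The induction splits into two kinds of steps.
\begin{itemize}
\item Within a block $\Delta_j$, and across a non-limit step $j\to j+1$, the next polynomial is a key polynomial for the previous truncation: $Q_i$ and $Q_{j+1}$ lie in $\Phi(\mu_{Q_j},\cdot)\subset\operatorname{KP}(\mu_{Q_j})$. So Lemma \ref{l1}(i) transfers irreducibility over $K^h$ freely, with no condition needed.
\item Across a limit step, with $m=\deg Q_{j+1}/\deg Q_i$, the regularity condition on $Q_{j+1}$ for $i\geq i_0$ says precisely that $Q_{j+1}$ has $Q_i$-expansion of degree $m$ with monic leading coefficient and that $m\in S_{\mu_{Q_i},Q_i}(Q_{j+1})$. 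Hence $\deg_{\mu_{Q_i}}(Q_{j+1})=m$ for a cofinal set of $i$.
\end{itemize}
By Lemma \ref{def}, the defect $d(\mu_{Q_j}\to\mu_{Q_{j+1}})$ is the eventual minimum of these degrees, and by (\ref{equa1}) it is at most $m$. So regularity of $Q_{j+1}$ is equivalent to the defect equalling the relative gap $\deg Q_{j+1}/\deg Q_i$. By Proposition \ref{dh}, together with Remark \ref{rem2} for the degree of the family, this equality is equivalent to
\[\frac{\deg Q_{j+1}}{\deg Q_{j+1}^h}=\frac{\deg Q_i}{\deg Q_i^h}.\]
Given the induction hypothesis $Q_i=Q_i^h$, this is equivalent to $Q_{j+1}=Q_{j+1}^h$, exactly as in the proof of Theorem \ref{mt2}.

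\textbf{Main obstacle.} I expect the hardest part to be making the dictionary rigorous in the limit case. Specifically, I must show that the family $(\mu_{Q_i})_{i\in\vartheta_j}$ is cofinal in, or equivalent to, the family $\mathcal{A}$ of Proposition \ref{cf1} attached to $\mu_{Q_j}\to\mu_{Q_{j+1}}$, so that the eventual value of $\deg_{\mu_{Q_i}}(Q_{j+1})$ really computes the defect. I would first try to use the property $\mu(Q_i)<\mu(Q_{i'})$ from Remark \ref{rabkp}(v) together with Remark \ref{r1} to identify each $\mu_{Q_i}$ with $\rho_\alpha$ for $\alpha=\mu(Q_i)$. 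Cofinality would then follow from completeness of $\Lambda$, since every $Q\in\Phi(\mu_{Q_j},\mu_{Q_{j+1}})$ must be $\mu$-valued below some $Q_i$.
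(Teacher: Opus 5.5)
Your proof is correct, but it cannot follow the paper's argument, because the paper never proves this statement. It quotes it from \cite[Theorem 1.3]{AM} and uses it in the opposite direction to yours. The paper establishes only (ii)$\iff$(iii): Proposition \ref{pc} handles the nodes $Q_j$, and Remark \ref{rc} handles the $Q_i$ with $i\in\vartheta_j$. This is exactly your first half. The paper then invokes \cite{AM} to conclude that regularity of $\Lambda$ is equivalent to every $Q_i$ being irreducible over $K^h$.

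You instead prove (i)$\iff$(iii) directly with the defect machinery. You show that regularity of a limit ABKP $Q_{j+1}$ amounts to $d(\mu_{Q_j}\to\mu_{Q_{j+1}})=\deg Q_{j+1}/\deg Q_j$, and then run the induction from the converse of Theorem \ref{mt2}. The implication (iii)$\Rightarrow$(i) is essentially already in the paper: it is Remark \ref{r2} with $Q=Q_i$, using $\rho_{Q_i}=\mu_{Q_i}$. The implication (i)$\Rightarrow$(iii) is the part the paper outsources to \cite{AM}.

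Your route makes the paper's identification of regular MLV chains with Dutta--Mahboub regularity self-contained. It is not circular, since nothing in Theorem \ref{mt2}, Proposition \ref{dh} or Lemma \ref{l1} depends on \cite{AM}. It also recovers \cite[Theorem 1.3]{AM} as a corollary. The paper's route is shorter but rests entirely on the external theorem.

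Two details should be written out.

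First, the cofinality you call the main obstacle follows immediately from completeness. Let $Q\in\Phi(\mu_{Q_j},\mu_{Q_{j+1}})$, and let $Q_k$ satisfy $\deg Q_k\le\deg Q$ and $\mu_{Q_k}(Q)=\mu(Q)$. Then $k\in\vartheta_j$: if $k\le j$ you would get $\mu(Q)=\mu_{Q_k}(Q)\le\mu_{Q_j}(Q)<\mu(Q)$. Hence $\deg Q_k=\deg Q$, and $\mu(Q)=\mu_{Q_k}(Q)=\min\{\mu(Q_k),\mu(Q-Q_k)\}\le\mu(Q_k)$.

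Second, passing from ``$\deg_{\mu_{Q_i}}(Q_{j+1})=m$ for all $i\ge i_0$'' to $d=m$ needs more than Lemma \ref{def}. That lemma only says the minimizers are cofinal in $\Phi(\mu_{Q_j},\mu_{Q_{j+1}})$, and they need not include any $Q_i$. You need that $Q\mapsto\deg_{\rho_Q}(Q_{j+1})$ is non-increasing in $\nu(Q)$. This holds for the following reason. If $\nu(Q)<\nu(Q')$, then $\rho_{Q'}=[\rho_Q;Q',\nu(Q')]$ with $Q'$ a minimal degree key polynomial for $\rho_Q$. For such an ordinary augmentation, $\deg_{\rho_{Q'}}(f)\le\min S_{\rho_Q,Q'}(f)\le\deg_{\rho_Q}(f)$.
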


		 The connection between a complete sequence of ABKPs and an MLV chain for any valuation-transcendental and valuation-algebraic extension $\mu$ on $K(x)$ is given in \cite{SA1} and \cite{SA2}, respectively. The same holds for a valuation $\mu$ with nontrivial support. These results show that, given a complete sequence of ABKPs, we can explicitly construct an MLV chain of $\mu$, and conversely.
		 
		  For instance, with notation as in Remark \ref{rabkp}, given a complete sequence of ABKPs $\Lambda=\{Q_i\}_{i\in\Delta}$ for $\mu$, an MLV chain of $\mu$ can be constructed as follows: for any $j\in I$, $\mu_{Q_j}=\mu_j$ forms a node of the MLV chain, and the $Q_j$'s are the defining key polynomials. The augmentation $\mu_j\longrightarrow \mu_{j+1}$ is ordinary if and only if $\vartheta_j=\emptyset$. Otherwise, if $Q_{j+1}$ is a limit ABKP, then it is also a limit key polynomial for the continuous family $(\mu_{Q_i})_{i\in\vartheta_j}$ of augmentations of $\mu_j.$ 
		 
		 Keeping the above in mind and in view of Remark \ref{rc}, Proposition \ref{pc} (ii) generalizes Theorem \ref{am} (ii). Hence, the regularity of a complete sequence of ABKPs for $\mu$ is equivalent to the hypothesis on the corresponding MLV chain of $\mu.$  Thus, we say that an MLV chain of a valuation $\mu$ is {\bf regular} if each of its defining key polynomials $\phi_n$ is irreducible over $K^h.$
		 
		 If an MLV chain of $\mu$ is regular, then by Remark \ref{r2}, for any limit ABKP $Q_{j+1}$, the regularity criteria hold not only for all elements of $\vartheta_j$ but  also for any $Q\in\Phi(\mu_j,\mu_{j+1}).$
		
		As an application of Theorem \ref{mt2}, we now show that the notion of regularity is independent of the choice of an MLV chain of $\mu.$
		\begin{corollary}\label{lc}
		The regularity of any MLV chain of a valuation $\mu$ is an invariant of $\mu.$
		\end{corollary}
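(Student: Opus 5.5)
The plan is to reduce regularity to a condition phrased entirely in terms of invariants of $\mu$, namely the relative gaps and the defects of the augmentation steps, using Theorem \ref{mt2}. Concretely, suppose we are given two MLV chains of $\mu$,
\[(v\xrightarrow{\phi_0})\mu_0\xrightarrow{\phi_1}\mu_1\longrightarrow\cdots\longrightarrow\mu \quad\text{and}\quad (v\xrightarrow{\psi_0})w_0\xrightarrow{\psi_1}w_1\longrightarrow\cdots\longrightarrow\mu,\]
and suppose the first one is regular, i.e., $\phi_n=\phi_n^h$ for all $n$. We must show that $\psi_n=\psi_n^h$ for all $n$.

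First, by the uniqueness results for MLV chains (\cite[Corollary 4.4, Theorem 4.7]{EN1}), the two chains have the same depth and the same nature (ordinary or limit) at each step. The degrees of the nodes also agree, so $\deg(\mu_n)=\deg(w_n)$, and $\deg(\Phi(\mu_n,\mu_{n+1}))=\deg(\Phi(w_n,w_{n+1}))$. Hence the relative gaps satisfy $d_n(\text{first chain})=d_n(\text{second chain})$ for every $n$.

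Second, by Theorem \ref{td1}, the defects agree as well: $d(\mu_n\to\mu_{n+1})=d(w_n\to w_{n+1})$. Applying the forward direction of Theorem \ref{mt2} to the regular first chain gives that, at every step, the defect equals the relative gap. Transferring both invariants, the same equality holds at every step of the second chain. The converse direction of Theorem \ref{mt2} then yields $\psi_n=\psi_n^h$ for every $n$, so the second chain is regular. By symmetry, regularity of one MLV chain of $\mu$ is equivalent to regularity of any other, which is the claim.

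The only point requiring care is that the indexing of the two chains matches step by step, including in the Type 2 and Type 3 cases. This matching is exactly what the uniqueness theorem of \cite{EN1} provides. Theorems \ref{td1} and \ref{mt2} are stated for arbitrary MLV chains (finite or infinite), so no further obstacle is expected. The final continuous-family tail of a Type 2 valuation is not an MLV step and carries no defining key polynomial beyond $\phi_r$, so it needs no separate treatment.
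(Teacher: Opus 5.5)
Your proposal is correct and follows the paper's own proof: you use the uniqueness of MLV chains to match the nature of each step and the node degrees, Theorem~\ref{td1} to transfer the defects, and Theorem~\ref{mt2} in both directions. The only difference is that the paper invokes the defect comparison just at limit steps, since at ordinary steps the defect and the relative gap both equal $1$; this is cosmetic.
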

		\begin{proof}
			Consider two MLV chains of $\mu$  \[(v\xrightarrow{\phi_0})\mu_0\xrightarrow{\phi_1} \mu_1\xrightarrow{\phi_2}\cdots \longrightarrow \mu_{n}\xrightarrow{\phi_{n+1}} \mu_{n+1}\longrightarrow\cdots\longrightarrow \mu,\] 
		\[(v\xrightarrow{\psi_0})w_0\xrightarrow{\psi_1} w_1\xrightarrow{\psi_2}\cdots \longrightarrow w_{n}\xrightarrow{\psi_{n+1}} w_{n+1}\longrightarrow\cdots\longrightarrow \mu,\] and assume that the first one is regular. We need to show that the second chain is also regular. By the uniqueness of MLV chains (see \cite[Theorem 4.7]{EN1}), $w_n\longrightarrow w_{n+1}$ is ordinary (or limit) if and only if $\mu_n\longrightarrow \mu_{n+1}$ is ordinary (or limit), and $\deg(\mu_n)=\deg(w_n)$ for every $n.$ 
		
		If $w_n\longrightarrow w_{n+1}$ is ordinary, then the defect and relative gap of this augmentation step are equal.
		
		 Assume now that $w_n\longrightarrow w_{n+1}$ is a limit augmentation. By Theorem \ref{td1},  \[d(\mu_n\longrightarrow \mu_{n+1})=d(w_n\longrightarrow w_{n+1}),\]  for every $n.$ Also, by assumption, each $\phi_n$ is irreducible over $K^h.$ Therefore, using Theorem \ref{mt2}, in the case of a limit augmentation, we have \[d(\mu_n\longrightarrow \mu_{n+1})=\frac{\deg(\mu_{n+1})}{\deg(\mu_n)}.\] On combining all the above equations, it follows that 
		\[d(w_n\longrightarrow w_{n+1})=d(\mu_n\longrightarrow \mu_{n+1})=\frac{\deg(\mu_{n+1})}{\deg(\mu_n)}=\frac{\deg(w_{n+1})}{\deg(w_n)}.\] Again on using Theorem \ref{mt2}, we have that each $\psi_n$ is irreducible over $K^h,$ and hence, the second chain is also regular.
		\end{proof}

		It now follows that the regularity of any complete sequence of ABKPs for $\mu$ is also an invariant of $\mu.$
		
		In view of the above corollary, we now say that a valuation $\mu$ is {\bf regular} if any MLV chain of $\mu$ is regular. In the next theorem, we show that the regularity of a finite degree valuation $\mu$, depends only on the degree of $\mu.$
		 
		\begin{theorem}\label{lst}
		Let $\mu$ be a valuation on $K[x]$ with finite degree. Then $\mu$ is regular if and only if $\deg(\mu)=\deg(\mu^h)$.	
		\end{theorem}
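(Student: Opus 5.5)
We split according to the type of $\mu$ and reduce everything to comparing $\deg\phi$ with $\deg\phi^h$ for a single key polynomial.

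\textbf{Type 1.} Since $\deg(\mu)<\infty$, either $\mu$ is of Type 1 or $\mu$ is valuation-algebraic of finite degree. If $\mu$ is of Type 1 with finite MLV chain $\mu_0\to\cdots\to\mu_r=\mu$, then $\deg(\mu)=\deg\phi_r$, where $\phi_r$ is a minimal degree key polynomial (or a generator of the support).

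\emph{Claim:} $\deg(\mu^h)=\deg\phi_r^h$. In the valuation-transcendental case this is Proposition \ref{df1}. In the nontrivial support case, $\operatorname{supp}(\mu^h)$ is generated by the irreducible factor of $\phi_r$ over $K^h$ corresponding to $\mu$, as in Remark \ref{r21}; call it $\phi_r^h$.

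Then $\deg(\mu)=\deg(\mu^h)$ if and only if $\phi_r=\phi_r^h$. It remains to show that $\phi_r=\phi_r^h$ if and only if every $\phi_n=\phi_n^h$.
\begin{itemize}
\item One direction is trivial.
\item For the converse, descend along the chain. At an ordinary step, $\phi_{n+1}\in\operatorname{KP}(\mu_n)$ and $\phi_n\in\operatorname{KP}(\mu_n)$, so Lemma \ref{l1} (i) gives $\phi_n=\phi_n^h\iff\phi_{n+1}=\phi_{n+1}^h$.
\item At a limit step, use the defect inequality. By Proposition \ref{dh}, $d(\mu_n\to\mu_{n+1})=\deg\phi_{n+1}^h/\deg(\Phi(\mu_n^h,\mu_{n+1}^h))$. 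By Remark \ref{rem2}, $\deg(\Phi(\mu_n^h,\mu_{n+1}^h))=\deg\chi^h$ with $\deg\chi/\deg\chi^h=\deg\phi_n/\deg\phi_n^h$. By (\ref{equa1}), $d\le\deg\phi_{n+1}/\deg\phi_n$. Combining these gives
\[
\frac{\deg\phi_{n+1}}{\deg\phi_{n+1}^h}\ \ge\ \frac{\deg\phi_n}{\deg\phi_n^h}\ \ge 1 .
\]
\end{itemize}
So the ratio $\deg\phi_n/\deg\phi_n^h$ is non-decreasing in $n$ along the whole chain. Hence ratio $1$ at $n=r$ forces ratio $1$ for all $n$. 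This gives Type 1.

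\textbf{Type 2.} $\mu$ is the stable limit of a family $\mathcal{C}$ of augmentations of $\mu_r$ with $\deg(\mathcal{C})=\deg(\mu_r)$. Here I expect $\deg(\mu)=\deg(\mu_r)$: valuations below $\mu$ are eventually cofinal with the $\rho_i$, which have degree $\deg(\mu_r)$. By Lemma \ref{ml1} and Remark \ref{rem2}, $\deg(\mu^h)=\deg(\mathcal{C}^h)=\deg(\mu_r^h)$. So the question reduces to the Type 1 valuation $\mu_r$, which has the same defining key polynomials, and the previous argument applies.

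\textbf{Type 3.} $\mu$ is the limit of an infinite MLV chain. Degrees $\deg(\mu_n)$ strictly increase at each step, so $\deg(\mu)=\sup\deg(\mu_n)=\infty$. This contradicts finiteness, so Type 3 does not occur.

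\textbf{Main obstacle.} The main difficulty is the Type 2 and Type 3 bookkeeping: justifying $\deg(\mu)=\deg(\mu_r)$ and $\deg(\mu^h)=\deg(\mu_r^h)$ from the definition of degree as a supremum over $(-\infty,\mu)$. The interval for $\mu^h$ lives in $K^h[x]$, so I need the chain for $\mu^h$ to be cofinal in it. For this I would try the extended family $\mathcal{C}^h$ of Lemma \ref{ml1}, together with the order-preserving property of $\deg$.
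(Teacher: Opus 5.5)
Your proof is correct. The Type 2 reduction to $\mu_r$ and the exclusion of Type 3 match the paper, but your converse in the Type 1 case takes a genuinely different route.

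The paper argues globally. From $\phi_r=\phi_r^h$ it gets that $K(\theta)|K$ is unibranched (Lemma \ref{l1} (ii)). It then uses the defect formula (\ref{ev}), which rests on Vaqui\'e's Theorem \ref{vth1}, on Theorem \ref{T2} and on its valuation-transcendental extension in \cite{NA}, to get $\prod_n d(\mu_n\longrightarrow\mu_{n+1})=\prod_n d_n$. The termwise bound (\ref{equa1}) then forces termwise equality, and Theorem \ref{mt2} concludes.

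You argue locally. At each limit step you combine Proposition \ref{dh}, Remark \ref{rem2} and (\ref{equa1}) to get $\deg\phi_{n+1}/\deg\phi_{n+1}^h\ge\deg\phi_n/\deg\phi_n^h$, and then you descend from $n=r$. This inequality is a one-sided, unconditional version of the identity (\ref{edf}), which the paper obtains in Theorem \ref{mt2} only under the assumption that defect equals relative gap. Your route bypasses unibranchedness and the Henselian-defect theorems entirely. As a by-product it shows that $\deg\phi_n/\deg\phi_n^h$ is monotone along any MLV chain, which immediately gives the paper's later remark that $\deg(\mu)=\deg(\mu^h)$ forces $\deg(\mu_n)=\deg(\mu_n^h)$ at every node. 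The paper's route, in exchange, makes explicit the link between regularity and unibranchedness of $K(\theta)|K$ that it advertises in the introduction.

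Two small points.

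First, at an ordinary step Lemma \ref{l1} (i) only gives $\phi_n=\phi_n^h\iff\phi_{n+1}=\phi_{n+1}^h$, not monotonicity of the ratio. That equivalence is all your descent actually uses. Equality of the ratios there does hold, by the argument behind Remark \ref{rem2}, since $\phi_{n+1}\in\operatorname{KP}(\mu_n)$.

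Second, the Type 2 bookkeeping you flag is short. The set $(-\infty,\mu^h)$ is totally ordered. Suppose some $\rho<\mu^h$ satisfied $\rho\ge\rho_i^h$ for all $i$, and take any $f$ with $\rho(f)<\mu^h(f)$. Then $\rho_i^h(f)\le\rho(f)<\mu^h(f)$ for every $i$, which contradicts $\mu^h=\rho_{\mathcal{C}^h}$ (Lemma \ref{ml1}). Hence $\mathcal{C}^h$ is cofinal in $(-\infty,\mu^h)$, and monotonicity of the degree gives $\deg(\mu^h)=\deg(\mathcal{C}^h)$. The same argument over $K$ gives $\deg(\mu)=\deg(\mathcal{C})=\deg(\mu_r)$. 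The paper simply takes both equalities from Lemma \ref{ml1} and Remark \ref{rem2}.
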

		\begin{proof}
			The proof is divided into two cases.
			
			\smallskip
			\noindent{\bf Case I:} The valuation $\mu$ is of Type 1 with an MLV chain \[(v\xrightarrow{\phi_0})\mu_0\xrightarrow{\phi_1}\mu_1\xrightarrow{\phi_2}\cdots \longrightarrow \mu_{n}\xrightarrow{\phi_{n+1}} \mu_{n+1}\longrightarrow\cdots\xrightarrow{\phi_r} \mu_r=\mu.\] Then, either $\phi_r$ is a minimal degree key polynomial for $\mu$ or $\text{supp}(\mu)=\phi_r K[x]$, and $\deg(\mu)=\deg\phi_r$. So  we have that either $\phi_r^h \in \operatorname{KP}(\mu^h)$ of minimal degree, or $\phi_r^h K^h[x]=\text{supp}(\mu^h)$ (see Proposition \ref{df1}, Lemmas \ref{df2} and \ref{df3}), hence $\deg (\mu^h)=\deg\phi_r^h.$ 
		
		If $\mu$ is regular, then $\phi_r=\phi_r^h$, and thus, $\deg(\mu)=\deg(\mu^h).$
		
		 For the converse, assume that $\deg(\mu)=\deg(\mu^h).$ Then $\phi_r$ is irreducible over $K^h,$ which in view of Lemma \ref{l1} (ii), implies that $K(\theta)|K$ is unibranched, for every $\theta\in Z(\phi_r).$ Therefore, by (\ref{ev}), we have 
		 \[d(K(\theta)|K)=\prod_{n=0}^{r-1} d(\mu_n\longrightarrow \mu_{n+1})=\prod_{n=0}^{r-1}\frac{\deg(\mu_{n+1})}{\deg(\Phi(\mu_n,\mu_{n+1}))}.\]But in general (see (\ref{equa1})), for each $n,$  $0\leq n<r$,
		 \[d(\mu_n\longrightarrow \mu_{n+1})\leq \frac{\deg(\mu_{n+1})}{\deg(\Phi(\mu_n,\mu_{n+1}))}.\] So, we must have, for each $n$, $0\leq n<r$, \[d(\mu_n\longrightarrow \mu_{n+1})= \frac{\deg(\mu_{n+1})}{\deg(\Phi(\mu_n,\mu_{n+1}))},\]and hence, Theorem \ref{mt2} implies that $\mu$ is regular.
		 
		 \smallskip
		 \noindent{\bf Case II:} The valuation $\mu$ is of Type 2 with an MLV chain \[(v\xrightarrow{\phi_0})\mu_0\xrightarrow{\phi_1}\mu_1\xrightarrow{\phi_2}\cdots \longrightarrow \mu_{n}\xrightarrow{\phi_{n+1}} \mu_{n+1}\longrightarrow\cdots\xrightarrow{\phi_r} \mu_r\xrightarrow{\mathcal{C}} \rho_{\mathcal{C}}=\mu,\]  where $\mu$ is the stable limit of the continuous family $\mathcal{C}=(\rho_i)_{i\in\mathbf{A}}$ of augmentations of $\mu_r$, $\rho_i=[\mu_r;\chi_i,\beta_i],$ and \[\deg(\mu_r)=\deg(\mathcal{C})=\deg(\mu).\]
		 
		 By Lemma \ref{ml1}, $\mu^h$ is also a stable limit of the continuous family $\mathcal{C}^h=(\rho_i^h)_{i\in\mathbf{A}}$ of augmentations of $\mu_r^h$, and \[\deg(\mu_r^h)=\deg(\mathcal{C}^h)=\deg(\mu^h)\] (see Remark \ref{rem2}). 
		 
		 Assume now that $\mu$ is regular, i.e., $\phi_n=\phi_n^h$ for every $n.$ Then, in particular, $\phi_r=\phi_r^h$ implies that $\deg(\mu_r)=\deg(\mu_r^h)$, and hence $\deg(\mu)=\deg(\mu^h).$
		 
		  Conversely, assume that $\deg(\mu)=\deg(\mu^h)$, which implies that $\deg(\mu_r)=\deg(\mu_r^h).$ As $\mu_r$ falls in Case I, $\deg(\mu_r)=\deg(\mu_r^h)$ implies that $\mu_r$ is regular, i.e., each $\phi_n$ is irreducible over $K^h$, for $0\leq n\leq r.$ Hence, $\mu$ is also regular.
		 \end{proof}
	
	\begin{remark}
		From Proposition \ref{df1} and Lemma \ref{l1} (i), if $\mu$ is valuation-transcendental, then $\mu$ is regular if and only if $\phi=\phi^h$ for any $\phi\in \operatorname{KP}(\mu).$
	\end{remark}
	
	Note that, in view of the above theorem, for any finite degree valuation $\mu,$ if $\deg(\mu)=\deg(\mu^h),$ then $\deg(\mu_n)=\deg(\mu_n^h)$ for every node $\mu_n$, $0\leq n\leq r$, in the MLV chain of $\mu$. Similarly, if $\mu$ is regular, then each node $\mu_n$ in the MLV chain of $\mu$ is regular, for $0\leq n\leq r.$ 
	
	 Keeping the above in mind, if $\mu$ has infinite degree, then $\mu$ is regular if and only if each node $\mu_n$ in any MLV chain of $\mu$ is regular.
	
	\begin{corollary}
		If the valuation $\mu$ is regular, then every valuation $\rho$ such that $\rho<\mu$ is also regular.
	\end{corollary}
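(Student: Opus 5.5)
The plan is to relate the MLV chain of $\rho$ to an initial segment of an MLV chain of $\mu$, and then pass regularity down through key polynomials via Lemma \ref{l1} (i).

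First, I would reduce to the case where $\rho$ is valuation-transcendental or has finite degree. Since $\rho<\mu$, $\rho$ admits key polynomials (as $\Phi(\rho,\mu)\subset\operatorname{KP}(\rho)$). Hence $\rho$ is valuation-transcendental, of Type 1, and has finite degree. By Theorem \ref{lst}, it then suffices to show $\deg(\rho)=\deg(\rho^h)$. Equivalently, by the Remark following Theorem \ref{lst}, it suffices to find one key polynomial $\phi\in\operatorname{KP}(\rho)$ with $\phi=\phi^h$.

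Next, fix an MLV chain $\mu_0\to\mu_1\to\cdots\to\mu$ of $\mu$, which is regular by hypothesis, so each $\phi_n=\phi_n^h$. I would compare $\rho$ with the nodes using the standard fact from \cite{EN1} that the nodes below $\mu$ are totally ordered with $\rho$. Concretely, choose the largest $n$ with $\mu_n\le\rho$. Such an $n$ exists since $\mu_0\le\rho$: both are defined by minimal pairs below $\mu$, and depth zero valuations below $\mu$ are comparable with $\rho$ by \eqref{eb}. I then split into cases.

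\textbf{Case $\rho=\mu_n$.} Then $\phi_n\in\operatorname{KP}(\rho)$ with $\phi_n=\phi_n^h$, and we are done.

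\textbf{Case $\mu_n<\rho$.} Then $\Phi(\mu_n,\rho)=\Phi(\mu_n,\mu)$ by Corollary \ref{2.5}, since $\mu_n<\rho<\mu$ (or $\rho$ is below the next node). Hence any $Q\in\Phi(\mu_n,\rho)$ is a key polynomial for $\mu_n$. By Lemma \ref{l1} (i) applied to $\mu_n$, with $\phi_n=\phi_n^h$, we get $Q=Q^h$. Moreover $\rho=[\mu_n;Q,\rho(Q)]$ when $\deg(\rho)=\deg Q$, i.e. when $\rho$ lies in an ordinary step or inside the continuous family of a limit step or Type 2 tail. Then $Q$ is a minimal degree key polynomial of $\rho$ with $Q=Q^h$, and we conclude.

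The main obstacle is the remaining subcase: $\rho$ lies strictly between $\mu_n$ and $\mu_{n+1}$ in a limit step, but $\deg(\rho)>\deg(\mu_n)$. This would make $\rho$ not an ordinary augmentation of $\mu_n$ by a polynomial of $\Phi(\mu_n,\mu)$. My first attempt is to rule this out. For $\rho<\mu_{n+1}$ we have $\Phi(\rho,\mu_{n+1})$ of degree $\le\deg(\Phi(\mu_n,\mu_{n+1}))=\deg\mathcal C$, by monotonicity of tangent degree. So $\rho$ is comparable with some $\rho_i\in\mathcal C$, and by Lemma 3.8 of \cite{EN1} (as used in the earlier lemma) it equals some $\rho_\beta=[\mu_n;\chi,\beta]$. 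If this fails, I would instead use the optimizing-root description: take $\phi\in\operatorname{KP}(\rho)$ and compare $\delta(\phi)$ with $\delta(\phi_{n+1})$. Then I would use Theorem \ref{t21} together with the fact that optimizing roots of $\phi_{n+1}$ form all of $Z(\phi_{n+1})$ to deduce that all roots of $\phi$ are optimizing, i.e. $\phi=\phi^h$.
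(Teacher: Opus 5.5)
Your reduction is sound. Since $\rho<\mu$ gives $\operatorname{KP}(\rho)\neq\emptyset$, Theorem \ref{lst} and Lemma \ref{l1} (i) reduce everything to finding one key polynomial of $\rho$ that is irreducible over $K^h$. Your case $\deg(\rho)=\deg(\Phi(\mu_n,\rho))$ also works: a minimal degree key polynomial of $\rho$ must lie in $\Phi(\mu_n,\rho)\subset\operatorname{KP}(\mu_n)$, since otherwise expansions give $\rho\le\mu_n$. Note, though, that $\rho=[\mu_n;Q,\rho(Q)]$ holds only for suitable $Q\in\Phi(\mu_n,\rho)$, not for every one. This node-by-node decomposition is the same one the paper uses.

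The genuine gap is the subcase you flag yourself: $\mu_n<\rho<\mu_{n+1}$, with $\mu_n\to\mu_{n+1}$ a limit augmentation and $\deg(\rho)>\deg(\mu_n)$.
\begin{itemize}
\item It cannot be ruled out. It contains every limit augmentation $[\mathcal{C};\phi_{n+1},\delta]$ with $\rho_i(\phi_{n+1})<\delta<\gamma_{n+1}$ for all $i$.
\item Your exclusion argument runs the monotonicity the wrong way. In fact $\deg(\Phi(\mu_n,\rho))\le\deg(\rho)\le\deg(\Phi(\rho,\mu_{n+1}))$, so here $\deg(\Phi(\rho,\mu_{n+1}))>\deg(\mathcal{C})$.
\item Such $\rho$ lie above every $\rho_i$, so they are not of the form $\rho_\beta$.
\item The fallback via optimizing roots is not an argument. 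Nothing in it transfers the fact that all roots of $\phi_{n+1}$ are optimizing for $\mu_{n+1}$ to the roots of an arbitrary $\phi\in\operatorname{KP}(\rho)$, whose optimizing roots are measured with respect to $\rho$.
\end{itemize}

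The missing idea is that in this subcase $\phi_{n+1}$ itself is a minimal degree key polynomial of $\rho$. The paper obtains this from Nart's description of the interval (Lemma 3.8 of \cite{EN1}). Every element of $(\mu_n,\mu_{n+1})_\Gamma$ is either some $\rho_\beta=[\mu_n;\chi_i,\beta]$ or some $[\mathcal{C};\phi_{n+1},\delta]$. In the latter case $\phi_{n+1}=\phi_{n+1}^h$ is a key polynomial of $\rho$ irreducible over $K^h$, which is exactly what your reduction needs.

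Equivalently, once $\deg(\rho)=\deg\phi_{n+1}$ is known, you can argue directly. If $\rho(\phi_{n+1})=\mu_{n+1}(\phi_{n+1})$, then $\phi_{n+1}$-expansions would force $\rho\ge\mu_{n+1}$. Hence $\phi_{n+1}\in\Phi(\rho,\mu_{n+1})\subset\operatorname{KP}(\rho)$.

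A smaller error: $\mu_0\le\rho$ need not hold. For example, $\rho=w_{\theta_0,\delta}$ with $\delta<\gamma_0$ lies below $\mu_0$. This case is harmless, since then $\deg(\rho)=1$ and a linear key polynomial is irreducible over $K^h$. It still has to be treated separately, as the paper does.
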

	\begin{proof}
		Consider an MLV chain of $\mu$ (say) \[(v\xrightarrow{\phi_0})\mu_0\xrightarrow{\phi_1}\mu_1\xrightarrow{\phi_2}\cdots \longrightarrow \mu_{n}\xrightarrow{\phi_{n+1}} \mu_{n+1}\longrightarrow\cdots \mu.\] Since $(-\infty,\mu)_\Gamma$ is totally ordered (see \cite[Theorem 2.4]{EN1}), we have 
		\[(-\infty,\mu)_\Gamma=(-\infty,\mu_0)_\Gamma\cup[\mu_0,\mu_1)_\Gamma\cup\cdots\cup[\mu_n,\mu_{n+1})_\Gamma\cup\cdots.\] 
		
		As $\mu_0$ is depth zero, $\mu_0=w_{\theta_0,\gamma_0}$ with $\phi_0=x-\theta_0$. Therefore, by (\ref{eb}), \[(-\infty,\mu_0)_\Gamma=\{w_{\theta_0,\delta}\mid \delta\in\Gamma,\delta<\gamma_0\},\] and $\phi_0$ is a minimal degree key polynomial for every valuation $\rho\in(-\infty,\mu_0)_\Gamma.$ Since $\phi_0$ being linear, is irreducible over $K^h$, and hence each $\rho\in(-\infty,\mu_0)_\Gamma$ is regular. 
		
		Since $\mu$ is regular, each $\phi_n$ is irreducible over $K^h,$ and by the theorem, each $\mu_n$ is regular. So to prove the result, it is enough to show that every $\rho\in(\mu_n,\mu_{n+1})_\Gamma$ is regular, for each $n.$ 
		
		For an arbitrary but fixed $n,$ $(\mu_n,\mu_{n+1})_\Gamma$ falls into one of the following three cases:
		
			\smallskip
		\noindent{\bf Case I:} The augmentation $\mu_n\longrightarrow \mu_{n+1}$ is ordinary, i.e., $\mu_{n+1}=[\mu_n;\phi_{n+1},\gamma_{n+1}].$ Then, by Lemma 2.7 of \cite{EN1}, we have 
		\[(\mu_n,\mu_{n+1})_\Gamma=\{(\mu_n)_\delta=[\mu_n;\phi_{n+1},\delta]\mid\delta\in\Gamma,\ \mu_n(\phi_{n+1})<\delta<\gamma_{n+1}\},\] i.e., every $\rho\in(\mu_n,\mu_{n+1})_\Gamma$ is an ordinary augmentation of $\mu_n$ defined by $\phi_{n+1}$. Therefore, $\phi_{n+1}$ is a minimal degree key polynomial for $\rho$ and $\deg(\rho)=\deg\phi_{n+1}.$ Since $\phi_{n+1}=\phi_{n+1}^h$, by Lemma \ref{df2}, we have $\deg(\rho)=\deg(\rho^h),$ and hence, by the theorem, $\rho$ is regular.
		
		\smallskip
		\noindent{\bf Case II:}  The valuation $\mu$ is the stable limit of a continuous family $\mathcal{C}=(\rho_i)_{i\in\mathbf{A}}$ of augmentations of $\mu_n,$ with $\rho_i=[\mu_n;\chi_i,\beta_i]$.
		
		 \noindent Then, $\Phi(\mu_n,\mu)=\Phi(\mu_n,\rho_i)=[\chi_i]_{\mu_n},$ i.e., all key polynomials $\chi_i$ are $\mu_n$-equivalent, and the value $\beta_{min}:=\mu_n(\chi_i)$ is independent of $i\in\mathbf{A}$. Let $S$ be the initial segment of $\Gamma_{>\beta_{min}}$ generated by the set $\{\beta_i\mid i\in\mathbf{A}\}.$
		
		 By Lemma 3.8 of \cite{EN1}, we have 
		\[(\mu_n,\mu)_\Gamma=\{\rho_\beta=[\mu_n;\chi_i,\beta]\mid \beta\in S\ \text{and}\ i\in\mathbf{A}\ \text{such that}\ \beta_i>\beta\},\] i.e., every $\rho\in(\mu_n,\mu_{n+1})_\Gamma$ is an ordinary augmentation of $\mu_n$ defined by some $\chi_i$. Therefore, $\chi_i$ is the minimal degree key polynomial for $\rho.$ As $\phi_n$ and $\chi_i$ are key polynomials for $\mu_n,$ by Lemma \ref{l1} (i), $\phi_n=\phi_n^h$ implies that $\chi_i=\chi_i^h$ and hence, $\deg(\rho)=\deg(\rho^h)$. Again by the theorem, $\rho$ is regular.
		
		\smallskip
		\noindent{\bf Case III:} The augmentation $\mu_n\longrightarrow \mu_{n+1}$ is limit, i.e., $\mu_{n+1}=[\mathcal{C};\phi_{n+1},\gamma_{n+1}],$ where $\mathcal{C}=(\rho_i)_{i\in\mathbf{A}}$ is an essential continuous family of augmentations of $\mu_n$, and $\rho_i=[\mu_n;\chi_i,\beta_i]$ for every $i\in\mathbf{A}$.
		
		 \noindent Again, by Lemma 3.8 of \cite{EN1}, we have 
		\[(\mu_n,\mu_{n+1})_\Gamma=\{\rho_\beta\mid \beta\in S\}\cup\{(\mu_n)_\delta=[\mathcal{C};\phi_{n+1},\delta]\mid \delta\in\Gamma, T<\delta<\gamma_{n+1}\},\]
		where $T:=\{\rho_i(\phi_{n+1})\mid i\in\mathbf{A}\},$ and $\rho_\beta$ as in Case II. Therefore, for any $\rho\in(\mu_n,\mu_{n+1})_\Gamma$, either $\rho=\rho_\beta$ or $\rho$ is a limit augmentation of $\mu_n$ defined by $\phi_{n+1}$. If $\rho=\rho_\beta$, then it is regular by Case II. Otherwise, $\phi_{n+1}$ is a minimal degree key polynomial for $\rho$ and $\deg(\rho)=\deg\phi_{n+1}.$ Since $\phi_{n+1}=\phi_{n+1}^h$, this implies that $\deg(\rho)=\deg(\rho^h),$ and hence, by the theorem, $\rho$ is regular.
	\end{proof}
	   
	\section*{Acknowledgement}
	The research of the first author is supported by the UGC
	(Reference no.\ 231620082501).
	
\end{document}